\documentclass[11pt, reqno]{amsart}
\usepackage{mathrsfs}
\usepackage{amssymb}
\usepackage{amsfonts}
\usepackage{amsmath, color, esvect}
 \usepackage[colorlinks,linkcolor=blue,urlcolor=cyan,citecolor=blue]{hyperref} 
\usepackage{genyoungtabtikz}
\usepackage{tikz}
\usepackage{multirow,graphicx}
\usepackage[normalem]{ulem}
\usepackage{multirow}
\usepackage{braket}
\usepackage{cleveref}
\usepackage{caption}

\makeatletter
\def\inn#1#2{\left\langle 
      \def\ta{#1}\def\tb{#2}
      \ifx\ta\@empty{\;} \else {\ta}\fi ,
      \ifx\tb\@empty{\;} \else {\tb}\fi
      \right\rangle} 
\makeatother

\def\fgg{\mathfrak{g}}

\def\cC{\mathcal{C}}

\def\mystrut{\vbox to .15in{}}

\newcommand{\al}{\alpha}

\newcommand{\frg}{\mathfrak{g}}
\newcommand{\frh}{\mathfrak{h}}

\newcommand{\fru}{\mathfrak{u}}

\newcommand{\caC}{\mathcal{C}}

\newcommand{\hobox}[3]{\draw (0+#1,0-#2) rectangle (1+#1,-1-#2)++(-0.5,+0.5) node {$ #3$};}

\newcommand{\mc}[1]{\mathcal{#1}}

\renewcommand{\subset}{\subseteq}
\newcommand{\rar}{\rightarrow}

\newcommand{\Ann}{\operatorname{Ann}}

\newtheorem{Thm}{Theorem}[section]
\newtheorem{cor}[Thm]{Corollary}
\newtheorem{prop}[Thm]{Proposition}
\newtheorem{lem}[Thm]{Lemma}

\theoremstyle{definition}
\newtheorem{Rem}[Thm]{Remark}
\newtheorem{dfn}[Thm]{Definition}

\numberwithin{equation}{section}

\newcommand{\trivial}[2][]{\if\relax\detokenize{#1}\relax{\color{red}\vspace{0em} $[$  #2 $]$}\else
\ifx#1h
\ifcsname showtrivial\endcsname{\color{orange} \vspace{0em}  $[$ #2 $]$}\fi\else{\colorWrong argument!}\fi\fi\ignorespaces}

\usepackage{enumitem}
\setlist[enumerate,1]{font=\textup,
	leftmargin=5mm,labelsep=0.2em,topsep=1mm,itemsep=1mm,itemindent=1em,listparindent=1em}
\setlist[enumerate,2]{font=\textup, leftmargin=3mm,labelsep=0.2mm,topsep=1mm,itemsep=1mm,itemindent=1em}
\setlist[enumerate,3]{font=\textup, leftmargin=4mm,labelsep=1mm,topsep=1mm,itemsep=1mm,itemindent=1em,listparindent=1em}
\setlist[enumerate,4]{font=\textup, leftmargin=5mm,labelsep=1mm,topsep=1mm,itemsep=1mm,itemindent=1em,listparindent=1em}

\newcommand{\fl}{\mathfrak{l}}

\newcommand{\fp}{\mathfrak{p}}

\newcommand{\fu}{\mathfrak{u}}

\begin{document}
\title[Minimal highest weight modules]{Associated varieties of integral 
minimal  highest weight modules}

\author{Zhanqiang Bai}
\address[Bai]{School of Mathematical Sciences, Soochow University, Suzhou 215006, P. R. China}
\email{zqbai@suda.edu.cn}

\author{Jing Jiang*}
\thanks{* Corresponding author}
\address[Jiang]{
School of Mathematical Sciences, East China Normal University, Shanghai 200241, China} 
\email{jjsd6514@163.com}

\author{Rui Wang}
\address[Wang]{School of Mathematical Sciences, Soochow University, Suzhou 215006, P. R. China}
\email{19962879291@163.com}


\subjclass[2010]{Primary 22E47, 17B10; Secondary 05E10}

\bigskip

\begin{abstract}
Let $\mathfrak{g}$ be a complex simple Lie algebra and $L(\lambda)$ be a highest weight  module of $\mathfrak{g}$ with highest weight $\lambda-\rho$, where $\rho$ is half the sum of positive roots.    A simple  $\fgg$-module $L_w:=L(-w\rho)$ is called 
 integral minimal if the associated variety of its annihilator ideal equals the closure of the minimal special nilpotent orbit. 
  In this paper, we  find that the  associated variety of  any integral minimal module $L_w$ is irreducible and equal to the orbital variety corresponding to the minimal length element in the Kazhdan--Lusztig right cell containing $w$.

\noindent{\textbf{Keywords:} Highest weight module; associated variety;
nilpotent orbit;  Schubert variety;  Kazhdan--Lusztig  cell.}
\end{abstract}

\maketitle

	\tableofcontents
\section{ Introduction}

Let $G$ be a simple Lie Group. Let $\mathfrak{g}$ be its simple complex Lie algebra and $\mathfrak{h}$ be a Cartan subalgebra.
For any finitely generated $U(\mathfrak{g})$-module $M$, Bernstein \cite{Be} defined the associated variety $V(M)$ in the dual space $\mathfrak{g}^{\ast}$, whose dimension coincides with the Gelfand–Kirillov (GK) dimension of $M$ \cite{Vo78}. This variety is a useful invariant in the study of $U(\mathfrak{g})$-modules, especially for highest weight modules, see for example \cite{BoB3, Ta, Mel93, Mc00, Wi}. But in general, computing this invariant for an irreducible module is a highly nontrivial problem.

Let $\Phi^+\subset\Phi$ be the set of positive roots determined by a Borel subalgebra $\mathfrak{b}$ of $\mathfrak{g}$. Denote by $\Pi$ the set of simple roots in $\Phi^+$. Fix a Borel subgroup $B\subset G$ corresponding to $\mathfrak{b}$. We have  a triangular decomposition $\mathfrak{g}=\mathfrak{n}\oplus \mathfrak{h} \oplus \mathfrak{n}^-$. Choose a subset ${\rm J}\subset \Pi$. Then it generates a subsystem
$\Phi_{\rm J}\subset\Phi$.
Let $\fp_{\rm J}$ be the standard parabolic subalgebra corresponding to ${\rm J}$ with Levi decomposition $\fp_{\rm J}=\fl_{\rm J}\oplus\fu_{\rm J}$.
For a \(\Phi^{+}(\mathfrak{l}_{\rm J})\)-dominant integral weight \(\lambda \in \mathfrak{h}^{*}\), let \(F(\lambda)\) be the irreducible \(\mathfrak{l}_{\rm J}\)-module with highest weight \(\lambda-\rho\), 
 where $\rho$ is half the sum of positive roots. Viewing \(F(\lambda)\) as a \(\mathfrak{p}_{\rm J} = \mathfrak{l}_{\rm J} \oplus \mathfrak{u}_{\rm J}^{+}\)-module (with the nilradical acting trivially), we define the generalized Verma module by
\[
N_{\rm J}(\lambda) := U(\mathfrak{g}) \otimes_{U(\mathfrak{p}_{\rm J})} F(\lambda).
\]
Let \(L(\lambda)\) denote the irreducible quotient of \(N_{\rm J}(\lambda)\), which is a highest weight module of $\mathfrak{g}$ with highest
weight \(\lambda-\rho\).


Denote
by $W$ the Weyl group of $\Phi$. Let $L_w$ denote the simple highest weight $\mathfrak{g}$-module with highest weight $-w\rho-\rho$, where $w\in W$.  
Joseph \cite{Jo84} found that the associated variety $V(L_w)$ is a union of orbital varieties, which are defined as follows. For a nilpotent $G$-orbit $\mathcal{O}\subseteq \mathfrak{g}$, the irreducible components of $\overline{\mathcal{O}}\cap \mathfrak{n}$ are called the {\it orbital varieties} of $\mathcal{O}$. Each such component can be expressed as $\mathcal{V}(w)=\overline{B(\mathfrak{n}\cap w\mathfrak{n})}$ for some $w\in W$. The associated variety $V(L_w)$ is said to be {\it irreducible} if it consists of a single orbital variety. For highest weight Harish-Chandra modules $L(\lambda)$, the associated variety $V(L(\lambda))$ has been characterized in Bai--Xiao--Xie \cite{BXX} and Bai--Hunziker--Xie--Zierau \cite{BHXZ}.  In general, however, the structure of $V(L_w)$ or $V(L(\lambda))$ remains unknown; see, for example, \cite{BoB3,Ta,Wi}. 


As noted in \cite{BoB3}, the associated variety $V(L_w)$ is intimately connected to Schubert varieties. In the following, we recall some of these relations.

 The  flag variety $G/B$ decomposes into a disjoint union of Schubert
cells $BwB/B$, where $w$  is an element in the  corresponding Weyl group $W$. The Schubert
varieties $X(w)=\overline{BwB/B}$ are the closures of the Schubert cells.
Schubert varieties are fundamental objects in algebraic geometry, representation theory, and combinatorics.  $X(w)$ is said to be  smooth if it is a smooth algebraic variety at every point, and  rationally smooth if the Poincar\'{e} polynomial $p_w(t)=\sum_{v\leq w}t^{\ell(v)}$is palindromic, where $\leq$ is the Bruhat order on $W$ and ${\ell}(v)$  the length of $v$.

Investigations into the smoothness and rational smoothness of Schubert varieties have been a major focus since the seminal work of Lakshmibai and Sandhya \cite{LS-90}, who provided a combinatorial characterization for smoothness in type \(A\): a Schubert variety \(X(w) \subset SL(n)/B\) is smooth precisely when the permutation \(w\in S_n\) avoids the patterns $3412$ and $4231$. This result was subsequently generalized to smoothness and rational smoothness in other classical types by Billey \cite{Bi-98}. Then Billey and Postnikov \cite{BP-05} developed a unified method based on root subsystems and embeddings. They demonstrated that, for any semisimple \(G\), the (rational) smoothness of \(X(w) \subset G/B\) is determined by pattern avoidance with respect to a finite list of patterns originating from stellar root subsystems (e.g., \(B_2,G_2,A_3, B_3,C_3, D_4\)). From now on, the corresponding Weyl group  element $w$ for a smooth Schubert variety $X(w)$ is called a smooth element.

We refer to \cite{KL79} or \S \ref{KL-cells} for the definition of Kazhdan--Lusztig right (resp. left and two-sided)  cell equivalence relation and use ${\sim}_R$ (resp. ${\sim}_L$ and ${\sim}_{LR}$)  to denote the right  (resp. left and two-sided) cell equivalence relation. We use $\mathcal{C}_R(w)$ (resp. $\mathcal{C}_L(w)$ and $\mathcal{C}_{LR}(w)$) to denote a KL right (resp. left and two-sided) cell  which contains $w$.

From \cite{Jo84}, we know that  the associated variety $V (L_w)$ is constant on each KL right cell.
It is also known that  the associated variety $V (L_w)=\mathcal{V}(w)$ if the Schubert variety $X(w)$ is smooth, see for example \cite[Cor. 4.3.2]{BoB3}.
Therefore, the associated variety $V (L_w)$ will be irreducible if the KL right  cell contains a smooth element. For type $A$, Bai and Chen \cite{Bchen} had given an efficient algorithm to find out all the smooth
elements in a given KL right cell.

Let $\mathcal{C}$ be the two-sided cell of $W$  consisting of elements which have a  a unique reduced expression.
In this paper, first we characterize  the smooth Schubert varieties in a given KL right cell of $w_0\mathcal{C}$, where $w_0$ is the longest element of the Weyl group $W$. 

\begin{Thm}\label{smcell}
    Let $\mathcal{C}_i$ be the KL  right cell containing the simple reflection $s_i=s_{\alpha_i}$. Then the smooth elements $S(w_0\mathcal{C}_i)$ of  the KL right cell $w_0\mathcal{C}_i$ are given as follows:
    \begin{enumerate}
        \item For type $A_{n}$, we have
$S(w_0\mathcal{C}_{1})=w_0\mathcal{C}_{1}$, $S(w_0\mathcal{C}_{n})=w_0\mathcal{C}_{n}$ and   $$S(w_0\mathcal{C}_{i})=\{w_0s_is_{i-1}\cdots s_{1},w_0s_is_{i+1}\cdots s_{n}\} \text{~for~} 2\leq i\leq n-1.$$
        \item For type $B_n$, we have $$S(w_0\mathcal{C}_1)=\{w_0s_1\cdots s_ns_{n-1}\cdots s_{n-j+1}\mid 1\leq j\leq n\},$$ $$S(w_0\mathcal{C}_n)=\{w_0s_ns_{n-1}\cdots s_1\}$$ and 
    $$S(w_0\mathcal{C}_i)=\{w_0s_is_{i+1}\cdots s_ns_{n-1}\cdots s_1\}\text{~for~}2\leq i\leq n-1.$$
        \item For type $C_n$, we have $S(w_0\mathcal{C}_1)=w_0\mathcal{C}_1$, $$S(w_0\mathcal{C}_n)=\{w_0s_ns_{n-1}\cdots s_1\}$$ and
    $$S(w_0\mathcal{C}_i)=\{w_0s_is_{i-1}\cdots s_1, w_0s_is_{i+1}\cdots s_ns_{n-1}\cdots s_{1}\}\text{~for~}2\leq i\leq n-1.$$
        \item  For type $D_n$ with $n\geq 4$, we have  
$$S(w_0\mathcal{C}_1)=\{w_0s_1\cdots s_{n-2}s_{n-1},\; w_0s_1\cdots s_{n-2}s_{n}\}$$ 
and $$S(w_0\mathcal{C}_{i})=\varnothing\text{~for~} 2\leq i\leq n-2.$$
For $n=4$, we have $$S(w_0\mathcal{C}_{3})=\{w_0s_{3}s_{2}s_1,w_0s_{3}s_{2}s_4\}$$ and  $$S(w_0\mathcal{C}_4)=\{w_0s_{4}s_{2} s_1,w_0s_{4}s_{2} s_3\}.$$
Moreover, for $n\geq 5$, we have $$S(w_0\mathcal{C}_{n-1})=\{w_0s_{n-1}s_{n-2}\cdots s_1\},$$ $$S(w_0\mathcal{C}_n)=\{w_0s_{n}s_{n-2}\cdots s_1\}.$$ 
        \item For type $E_r$ with $r=6,7,8$, we have
$$S(w_0\mathcal{C}_{i})=\varnothing\text{~for~} 1\leq i\leq r.$$
        \item For type $F_4$, we have $$S(w_0\mathcal{C}_{i})=\varnothing\text{~for~} 1\leq i\leq 4.$$
        \item For type $G_2$, we have 
$$S(w_0\mathcal{C}_{1})=\{s_{2},s_{2}s_1, s_2s_1s_2\}$$ and 
$$S(w_0\mathcal{C}_{2})=\{s_{1},s_{1}s_2\}.$$     
    \end{enumerate}
\end{Thm}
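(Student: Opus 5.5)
Since $X(w)$ is smooth exactly when $X(w_0w)$ is, and since $w\mapsto w_0w$ carries right cells to right cells, the plan is to classify all smooth elements of $w_0\mathcal{C}$ by applying smoothness criteria directly to the explicit list of elements $w_0 u$, where $u$ has a unique reduced expression. The computation therefore splits into two parts.

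\textbf{First}, I describe the cell $\mathcal{C}$ and its right cells combinatorially. By Lusztig's results, $\mathcal{C}$ is the two-sided cell of $a$-value $1$, and its elements are the non-identity elements with a unique reduced expression. Concretely, these are the products $s_{i_1}s_{i_2}\cdots s_{i_k}$ along paths in the Coxeter graph, where consecutive indices are adjacent; in the multiply laced cases, back-and-forth steps across a double or triple bond are allowed up to the braid bound. The right cell $\mathcal{C}_i$ consists of those elements whose reduced word begins with $s_i$, because for $a$-value $1$ the left descent set is the single letter $\{s_i\}$ and determines the right cell. Consequently $w_0\mathcal{C}_i=\{w_0u : u\in\mathcal{C},\ u \text{ starts with } s_i\}$, and this is a finite, explicit list. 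Equivalently, $w_0u$ is smooth exactly when $w_0u w_0$ is, and also exactly when $(w_0u)^{-1}=u^{-1}w_0$ is, so I may choose whichever normalization is most convenient.

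\textbf{Second}, I test smoothness of $X(w_0u)$. Here I use the fact that $X(v)$ is smooth iff $X(v^{-1})$ is, together with the criterion of Billey--Postnikov (equivalently Lakshmibai--Sandhya in type $A$ and Billey in types $B,C,D$) that smoothness is governed by pattern avoidance. In practice I will use a more geometric route. The Schubert variety $X(w_0u)$ has complement-codimension given by $\ell(u)$, and $X(w_0 u)$ is smooth iff the number of reflections $t$ with $t\le w_0u$ equals $\ell(w_0u)$ (Carrell--Peterson/Kumar tangent-space count at the point $e$). Equivalently, the inversion set of $u$ must be complementary to a closed set in the appropriate sense. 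Since $w_0u\ge t$ fails only for reflections $t$ lying below no element of this form, I will count the reflections $t$ with $t\not\le w_0u$ and compare with $\ell(u)$. For $u$ a path word $s_{i_1}\cdots s_{i_k}$ this reduces to a root-system computation. The condition becomes: the set of roots $\beta$ whose reflection $s_\beta$ fails to lie below $w_0 u$ has exactly $k$ elements, namely $\{\alpha_{i_1}, s_{i_1}\alpha_{i_2},\dots\}$ up to $w_0$-twist. In type $A$ this recovers that the paths must be monotone and end at a leaf of the diagram, or that $i$ is itself an end node. Doing this node by node yields the lists in parts (1)--(4) and (7). For $E_r$ and $F_4$, I expect every path to produce an extra bad reflection because of the branch node, or because of the double bond with a long/short mismatch; this can be confirmed by a finite check, and is done most cleanly via Billey--Postnikov patterns: each $w_0u$ contains a $D_4$ or $B_3/C_3$ pattern after restriction to a stellar subsystem containing the branch point.

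\textbf{Main obstacle.} The hardest step is the uniform smoothness test in the doubly laced and branched cases. Pattern containment for the $D_n$ fork, and in $B_n$ versus $C_n$, is asymmetric: for instance, $C_n$ allows the path $s_i\cdots s_1$ while $B_n$ does not. I would first try to reduce everything via Billey--Postnikov to parabolic restrictions of rank at most $4$. For each candidate $u$, the restriction of $w_0u$ to each stellar subsystem is again of the form $w_0' u'$ with $u'$ a short path, so a finite table of rank $\le 4$ cases settles all the types at once. Type $G_2$ is a direct check of its twelve elements.
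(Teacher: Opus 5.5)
Your opening claim that $X(w)$ is smooth exactly when $X(w_0w)$ is smooth is false: in $A_3$, $X(s_2)\cong\mathbb{P}^1$, but $w_0s_2=4231$ is singular. You never actually use this claim.

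The genuine gap is the criterion you adopt ``in practice''. The number $\#\{t\le w : t \text{ a reflection}\}$ equals $\dim T_eX(w)$ only in simply laced types. In general it is only a lower bound, so the equality $\#\{t\le w\}=\ell(w)$ is necessary for smoothness but not sufficient. This count depends only on the Bruhat order of $(W,S)$. Since $W(B_n)=W(C_n)$ with the same labelling of generators, your test must return identical answers in types $B_n$ and $C_n$. Parts (2) and (3) of the statement differ, however: $S(w_0\mathcal{C}_1)$ is the whole cell ($2n-1$ elements) in type $C_n$ but only $n$ of them in type $B_n$. In $G_2$ every lower Bruhat interval is dihedral, so the count equals $\ell(w)$ for every $w$. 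Your test would then declare all of $w_0\mathcal{C}$ smooth, contradicting (7): for example, $w_0s_1s_2=s_2s_1s_2s_1$ is singular. So the count cannot produce (2), (3) or (7), and it is unreliable for $F_4$. There you need a genuine smoothness criterion, such as Kumar's or the Billey--Postnikov root-subsystem patterns (equivalently Billey's signed patterns); the latter is what the paper uses.

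Your fallback is also misstated in a way that matters. Theorem~\ref{BP-main} requires checking \emph{all} stellar root subsystems, most of which are not parabolic, and parabolic flattenings do not suffice even for the elements at hand. In $A_4$, the element $w_0s_2s_3=53241\in w_0\mathcal{C}_2$ is singular, since the values $5,3,4,1$ form a $4231$. Yet its only two parabolic $A_3$-flattenings are $3241$ and $4213$, both smooth. So no finite table of rank $\le 4$ cases settles every $n$ at once. In the classical types you must control every choice of four indices in the signed one-line notation.

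That is exactly the content of the paper's proof:
\begin{itemize}
\item it writes each $w_0u$ explicitly (Table~\ref{Elements-in-cell}, Lemma~\ref{long});
\item it checks Proposition~\ref{3412} in type $A$;
\item it checks the pruned pattern lists of Theorem~\ref{pattern-avoid} in types $B$, $C$, $D$;
\item it checks all stellar subsystems by computer in types $E$ and $F_4$.
\end{itemize}
Your proposal asserts the final lists (``doing this node by node yields\dots'') without any of this verification. In particular, nothing in it explains why in $D_n$ the cells $w_0\mathcal{C}_i$, $2\le i\le n-2$, contain no smooth element. They contain $w_0s_i\cdots s_1$, a path from a non-end node to a leaf, which your type-$A$ heuristic would predict to be smooth.
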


To determine the smoothness of $X(w)$ for $w\in w_0\mathcal{C}$, we have implemented an algorithm in Python, available at
\begin{center}
\url{https://github.com/JingJiang-web/smooth-Schubert-variety}.
\end{center}






For a highest weight $\mathfrak{g}$-module $L(\lambda)$, the annihilator $\mathrm{Ann}(L(\lambda))$ is a primitive ideal in the enveloping algebra $U(\mathfrak{g})$. The associated graded ideal $\mathrm{gr}(\mathrm{Ann}(L(\lambda)))$ is an ideal inside the symmetric algebra $S(\mathfrak{g})$, which is naturally isomorphic to $\mathrm{gr}(U(\mathfrak{g}))$. The associated variety of  the quotient module $U(\mathfrak{g})/\mathrm{Ann}(L(\lambda))$ is defined as the zero locus of $\mathrm{gr}(\mathrm{Ann}(L(\lambda)))$ in $\mathfrak{g}^*$; this set is called the {\it annihilator variety} of $L(\lambda)$ and is known to be the Zariski closure of a nilpotent orbit $\mathcal{O}_{\mathrm{Ann}(L(\lambda))}$ in $\mathfrak{g}^*$ \cite{Jo85}.  In \cite{BMW} and \cite{BGWX}, the annihilator variety of a highest weight module had been given in a simple way by using the  Robinson--Schensted algorithm and Sommers duality.
But the associated variety of a highest weight module is still a challenging problem. 

In the case where $\mathcal{O}_{\mathrm{Ann}(L(\lambda))}$ coincides with $\mathcal{O}_{\min}$, the unique minimal nilpotent orbit of $\mathfrak{g}^*$, the associated variety $V(L(\lambda))$ had been given in a simple and uniform characterization by Bai--Ma--Xiao--Xie \cite{BMXX}. From  \cite{BMXX}, we know that there exists a unique minimal special nilpotent orbit (see \cite[\S 6.3]{CM} for the definition of special orbit), denoted $\mathcal{O}_{\mathrm{ms}}$. It satisfies $\mathcal{O}_{\mathrm{ms}} = \mathcal{O}_{\min}$ for simply laced Lie algebras, whereas $\mathcal{O}_{\mathrm{ms}} \neq \mathcal{O}_{\min}$ in the non-simply laced cases. It is known that the minimal special orbit $\mathcal{O}_{{\rm ms}}$ corresponds to the two-sided cell $w_0\mathcal{C}$, see for example \cite{BV82,BMXX,BGWX}. From Kobayashi \cite{Ko11}, we know that small representations of a group will reflect large symmetries in a representation space. This is our motivation for studying small representations of Lie algebras and Lie groups.

By using properties of the associated varieties of highest weight modules and Theorem \ref{smcell}, we can obtain the following result.

\begin{Thm}\label{AV-min}
Suppose that we have $\mathcal{O}_{{\rm Ann}(L_w)}=\mathcal{O}_{{\rm ms}}$.
Then the associated variety $V(L_w)$ is irreducible and can be given as follows:
\begin{enumerate}
   \item For non-simply laced types,  we have $V(L_w)=\mathcal{V}(w_{\rm min})$ when $w\in w_0\mathcal{C}_i$, where $w_{\rm min}$ denotes the unique minimal length element in the KL right cell $w_0\mathcal{C}_i$.
   
 \item For simply laced types, we have $V(L_w)=\mathcal{V}(w_{\rm max})=\mathcal{V}(w_{\rm min})$ when $w\in w_0\mathcal{C}_i$, where $w_{\rm max}=w_0s_i$ is the unique maximal length  element  in the KL right cell $w_0\mathcal{C}_i$.
  
\end{enumerate}

\end{Thm}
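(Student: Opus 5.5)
We reduce everything to the right cells $w_0\mathcal{C}_i$ and then combine Joseph's constancy result with the smoothness criterion. Since $\mathcal{O}_{{\rm Ann}(L_w)}$ depends only on the two-sided cell of $w$, and $\mathcal{O}_{\rm ms}$ corresponds to the two-sided cell $w_0\mathcal{C}$, the hypothesis forces $w\in w_0\mathcal{C}$. The right cells in $\mathcal{C}$ are exactly the $\mathcal{C}_i$ (elements with a unique reduced expression, grouped by their first letter). Hence the right cells of $w_0\mathcal{C}$ are the $w_0\mathcal{C}_i$, and $w\in w_0\mathcal{C}_i$ for some $i$. By \cite{Jo84}, $V(L_w)$ is constant on right cells, so $V(L_w)=V(L_{w'})$ for every $w'\in w_0\mathcal{C}_i$. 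By \cite[Cor. 4.3.2]{BoB3}, $V(L_{w'})=\mathcal{V}(w')$ whenever $w'$ is smooth. Therefore, whenever $S(w_0\mathcal{C}_i)\neq\varnothing$, Theorem \ref{smcell} gives irreducibility and $V(L_w)=\mathcal{V}(s)$ for any $s\in S(w_0\mathcal{C}_i)$.

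It remains to identify $\mathcal{V}(s)$ with $\mathcal{V}(w_{\rm min})$, and with $\mathcal{V}(w_0s_i)$ in the simply laced cases. Recall that $\mathcal{V}(w)=\overline{B(\mathfrak{n}\cap w\mathfrak{n})}$, and that two elements in the same right cell, or more precisely in the same fibre of the map $w\mapsto\mathcal{V}(w)$, give the same orbital variety. For type $A$ the fibres of this map are exactly the right cells (Steinberg/Joseph), so in particular $\mathcal{V}(s)=\mathcal{V}(w_{\rm min})=\mathcal{V}(w_0s_i)$. In general we check directly that $w_{\rm min}$ is itself smooth, or that $\mathfrak{n}\cap w_{\rm min}\mathfrak{n}$ and $\mathfrak{n}\cap s\,\mathfrak{n}$ have the same $B$-saturation. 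We get $w_{\rm min}$ explicitly: multiplying by $w_0$ reverses the Bruhat order and lengths, so $w_{\rm min}=w_0u_{\max}$, where $u_{\max}$ is the longest element of $\mathcal{C}_i$ with a unique reduced expression. Reading off Theorem \ref{smcell}, $w_{\rm min}$ is the listed smooth element of longest tail (e.g. $w_0s_is_{i+1}\cdots s_ns_{n-1}\cdots s_1$ in $B_n$, $C_n$). In those cases $w_{\rm min}\in S(w_0\mathcal{C}_i)$ and the conclusion is immediate.

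The main obstacle is the cells with $S(w_0\mathcal{C}_i)=\varnothing$ (types $D_n$ for $2\le i\le n-2$, $E$ and $F_4$), and the simply laced claim $\mathcal{V}(w_0s_i)=\mathcal{V}(w_{\rm min})$. For simply laced $\mathfrak{g}$ we have $\mathcal{O}_{\rm ms}=\mathcal{O}_{\min}$, so I would invoke \cite{BMXX}. That work shows that when the annihilator variety is $\overline{\mathcal{O}_{\min}}$, the variety $V(L(\lambda))$ is the unique orbital variety $\overline{B\cdot\mathfrak{g}_{\beta}}$ determined by the simple root attached to $\lambda$, here $\alpha_i$. I would verify that this equals $\mathcal{V}(w_0s_i)$: since $\mathfrak{n}\cap w_0s_i\mathfrak{n}=\mathfrak{g}_{\alpha_i}$, we get $\mathcal{V}(w_0s_i)=\overline{B\mathfrak{g}_{\alpha_i}}$. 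I would then check it also equals $\mathcal{V}(w_{\rm min})$ by showing $\mathfrak{n}\cap w_{\rm min}\mathfrak{n}\subset\overline{B\mathfrak{g}_{\alpha_i}}$. Both are irreducible of the same dimension $\tfrac12\dim\mathcal{O}_{\min}$, so they coincide.

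For $F_4$ the hypothesis only concerns $\mathcal{O}_{\rm ms}\neq\mathcal{O}_{\min}$, so \cite{BMXX} does not apply directly. There I would use a computer check, in the same spirit as the Python program: show that $\mathfrak{n}\cap w_{\rm min}\mathfrak{n}$ is $B$-stable up to closure, and that every $w\in w_0\mathcal{C}_i$ satisfies $\mathfrak{n}\cap w\mathfrak{n}\subset\mathcal{V}(w_{\rm min})$. Together with the dimension equality and Joseph's constancy, this yields $V(L_w)=\mathcal{V}(w_{\rm min})$.
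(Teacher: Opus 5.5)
You handle types $A$, $B$, $C$, $G_2$ as the paper does: each right cell contains a smooth element (for $B$, $C$, $G_2$ this is $w_{\rm min}$ itself), combined with Lemma \ref{constant} and Proposition \ref{smoothequal}. The $F_4$ step, however, has a genuine gap. Your computer check would at best show $\mathcal{V}(w)=\mathcal{V}(w_{\rm min})$ for every $w$ in the right cell $w_0\mathcal{C}_i$. Together with constancy, that only gives $\mathcal{V}(w_{\rm min})\subseteq V(L_w)$.

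By Lemma \ref{gamma-w}, $V(L_w)=\bigcup_{y\in\Gamma(w)}\mathcal{V}(y)$, and $\Gamma(w)$ may contain elements $y\le w$ from \emph{other} right cells $w_0\mathcal{C}_j$ of the two-sided cell. Neither constancy nor a dimension count excludes these. In fact, by Lemma \ref{gamma-w}(1), once $V(L_w)$ is known to be irreducible, every $y$ in the right cell automatically satisfies $\mathcal{V}(y)=V(L_w)$. So your check is a consequence of the theorem, not a proof of it. Since $S(w_0\mathcal{C}_i)=\varnothing$ for all $i$ in $F_4$, there is no smoothness shortcut either.

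What is missing is a way to rule out those foreign components, and the paper supplies it. For $i=1,4$, $w_{\rm min}$ has minimal length in the whole two-sided cell, so Lemma \ref{min-length-w} gives irreducibility. For $i=2,3$, the candidates $y\le w_{\rm min}$ in $w_0\mathcal{C}$ outside the cell are among $w_1=w_0s_1s_2s_3s_2s_1$ and $w_4=w_0s_4s_3s_2s_3s_4$, and each satisfies $V(L_{y})=\mathcal{V}(y)$ by Lemma \ref{min-length-w}. They are then excluded by Proposition \ref{Same}. Since $-w_0s_i\rho=\rho-\alpha_i$ is $\Phi^+_{\Pi\setminus\{\alpha_i\}}$-dominant, $V(L_{w_{\rm min}})\subseteq\mathfrak{u}_{\Pi\setminus\{\alpha_i\}}$. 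On the other hand, $V(L_{w_0s_1})$ (resp. $V(L_{w_0s_4})$) cannot lie there, because $(\rho-\alpha_1,\alpha_1^\vee)<0$ (resp. $(\rho-\alpha_4,\alpha_4^\vee)<0$).

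For $D$ and $E$ your route is different from the paper's and actually shorter. Applying Lemma \ref{minimal-integralequal} at $w_0s_i$ gives irreducibility at once, whereas the paper first proves $\Gamma(w_{\rm min})=\{w_{\rm min}\}$ by induction with Lemma \ref{gamma-w} and Proposition \ref{Same}. Two corrections are needed, though.

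First, $\mathfrak{n}\cap w_0s_i\mathfrak{n}=\mathfrak{g}_{\alpha_{k_i}}$ with $\alpha_{k_i}=-w_0\alpha_i$, not $\mathfrak{g}_{\alpha_i}$. Likewise, the root singled out by Lemma \ref{minimal-integralequal} for $\lambda=-w_0s_i\rho=\rho-\alpha_{k_i}$ is $\alpha_{k_i}$. The two slips cancel in the identity $V(L_w)=\mathcal{V}(w_0s_i)$, but each statement is wrong as written in types $A_n$, $D_n$ with $n$ odd, and $E_6$.

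Second, the containment you propose to verify by hand is unnecessary. By Lemma \ref{gamma-w}(1), $\mathcal{V}(w_{\rm min})\subseteq V(L_{w_{\rm min}})=V(L_{w_0s_i})=\overline{B\mathfrak{g}_{\alpha_{k_i}}}$. Moreover $\mathcal{V}(w_{\rm min})$ is a nonzero orbital variety lying in $\overline{\mathcal{O}_{\min}}$, hence an orbital variety of $\mathcal{O}_{\min}$, so the two coincide by dimension.

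Finally, drop the remark that elements of one right cell share an orbital variety. This is false outside type $A$, as the paper notes after Proposition \ref{geometriccell}, although you never actually use it.
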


  This paper is organized as follows. In \S \ref{pre}, we  give some necessary preliminaries about the Schubert varieties, associated varieties of highest weight modules and Kazhdan--Lusztig cells. In \S \ref{section-A}, \S \ref{typebc}, \S \ref{typed}, \S \ref{typeef} and \S \ref{typeg2}, we characterize the smooth elements  in the KL right cell $w_0\mathcal{C}_i$ and  the associated variety of $L_w$ in the case of type $A_{n}$, $B_n$ and $C_n$, $D_n$, $E$ and $F_4$, and $G_2$ respectively.

\section{Preliminaries}\label{pre}
In this section, we give some brief preliminaries on Schubert varieties, associated varieties of highest weight modules and Kazhdan--Lusztig cells.

\subsection{Schubert variety}	
	
Let $G$ be a complex simple algebraic group and $ B$ be  the standard Borel subgroup of $G$.

\begin{dfn}
The \textit{ Schubert cell} associated to $w \in W$ is:
\[
C(w) = B w B /B\subset G/B
\]
The \textit{Schubert variety} $X(w)$ is its Zariski closure: $X(w) := \overline{C(w)}$.
\end{dfn}


For $v,w\in W$, we write $v\rightarrow w$ if $\ell(w)>\ell(v)$ and $w=vs_\alpha$ for some $\alpha\in \Phi^+$.
Then we define $v<w$ if there exists a sequence $v=w_1\rightarrow w_2\rightarrow\ldots \rightarrow w_m=w$.
The resulting partial order ``$\leq$'' on $W$ is called the {\it Bruhat order\/}. Some more properties about this order can be found in \cite[Chap. 2]{BB05}.


\begin{prop}[{\cite[Cor. 4]{Ca-11}}]\label{inverse}
    A Schubert variety $X(w)$ in $G/B$ is smooth if and only if the corresponding inverse Schubert variety $X(w^{-1})$ is also smooth.
\end{prop}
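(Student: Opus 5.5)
The plan is to move the question from the flag variety $G/B$ up to the group $G$, where inversion gives the symmetry $w\leftrightarrow w^{-1}$ directly. Write $\overline{BwB}$ for the Zariski closure of the double coset $BwB$ in $G$. The first step is to compare $\overline{BwB}$ with $X(w)$ through the quotient map $\pi\colon G\to G/B$. This map is a locally trivial principal $B$-bundle in the Zariski topology: one can use translates of the big cell $U^-B$ to trivialize it. Since $BwB$ is a union of right $B$-cosets, so is its closure, and hence $\overline{BwB}=\pi^{-1}(X(w))$. Over each trivializing open set $V$ we then get $\pi^{-1}(X(w)\cap V)\cong (X(w)\cap V)\times B$, with $B$ a smooth variety. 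It follows that $\overline{BwB}$ is smooth at $g$ if and only if $X(w)$ is smooth at $\pi(g)$. In particular, $X(w)$ is smooth if and only if $\overline{BwB}$ is smooth.

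The second step uses the inversion map $\iota\colon G\to G$, $g\mapsto g^{-1}$. This is an automorphism of $G$ as an algebraic variety, though not as a group, and it sends $BwB$ onto $B\dot w^{-1}B=Bw^{-1}B$, where $\dot w$ is any representative of $w$ in $N_G(T)$. Because $\iota$ is a homeomorphism in the Zariski topology, it carries $\overline{BwB}$ isomorphically onto $\overline{Bw^{-1}B}$. Smoothness is preserved under isomorphism, so $\overline{BwB}$ is smooth if and only if $\overline{Bw^{-1}B}$ is. Applying the first step to $w^{-1}$ then gives the chain
\[
X(w)\ \text{smooth}\iff \overline{BwB}\ \text{smooth}\iff \overline{Bw^{-1}B}\ \text{smooth}\iff X(w^{-1})\ \text{smooth}.
\]

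The only step that needs care is the first one. It requires checking that $\pi^{-1}(X(w))$ is exactly $\overline{BwB}$, which holds because $\pi$ is open and $BwB$ is right $B$-saturated. It also requires local triviality of $\pi$, which is standard. An alternative route would go through Kumar's or Carrell--Peterson's tangent-space criteria at the point $eB$. There one would use that the singular locus is closed and $B$-stable, so $X(w)$ is smooth if and only if it is smooth at $eB$. One would then compare the tangent-space data for $w$ and $w^{-1}$. That route looks more cumbersome, so I would use the group-level inversion argument above.
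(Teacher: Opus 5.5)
The paper gives no argument of its own for this statement. It is imported verbatim from \cite[Cor.~4]{Ca-11}, so there is nothing internal to match step by step.

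Your proof is correct and complete, and it is the standard self-contained argument. The two points you flag are exactly the ones that need justification.
\begin{itemize}
\item The identity $\pi^{-1}(X(w))=\overline{BwB}$ follows from openness of $\pi$, since for an open map the preimage of a closure is the closure of the preimage.
\item Zariski-local triviality of $\pi$, via translates $gU^-B\cong gU^-\times B$, identifies $\pi^{-1}(X(w)\cap V)$ with $(X(w)\cap V)\times B$ as reduced varieties. Smoothness therefore transfers in both directions.
\end{itemize}
The inversion map then finishes the argument, since it is an automorphism of the variety $G$ with $\iota(BwB)=Bw^{-1}B$.

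Compared with relying on the citation, your route needs no tangent-space computations, $T$-stable curves, or pattern-avoidance criteria. It works uniformly for every $w$ in every type, including $G_2$, and over any algebraically closed field.

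It also gives a sharper pointwise statement at no extra cost. Since $\iota(\dot v)=\dot v^{-1}$, the same chain shows that $X(w)$ is smooth at $vB$ if and only if $X(w^{-1})$ is smooth at $v^{-1}B$. Thus the singular loci of $X(w)$ and $X(w^{-1})$ correspond under $v\mapsto v^{-1}$.
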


In simply laced types,  a Schubert variety in $G/B$ is rationally smooth if and only if it is smooth; see \cite{CK03}.





\subsubsection{Pattern avoidance of type $A$}

By the definition, an element $w\in S_n$ is a permutation of the set $\{1,2,...,n\}$. In general, we use $w=(w_1,...,w_n)$ to denote this permutation, where $w_i=w(i)$.

First we have the following definition.

\begin{dfn}
	The element $w = (w_1, ..., w_n)\in S_n$ {\it contains the pattern} $3412$
	 (resp. $4231$) if there exist integers $1\leq i < j < k < l\leq n$ such that
	$w_k < w_l < w_i < w_j $ (resp. $ w_l < w_j < w_k < w_i$).
	 If there is no such integers, we say $w$ {\it avoids the pattern } $3412$ and $4231$.
\end{dfn}

We have the following criterion for smoothness of Schubert varieties.

\begin{prop}[\cite{LS-90}]\label{3412}
	For $\mathfrak{g}=\mathfrak{sl}(n, \mathbb{C})$ and $ W=S_n$,  the Schubert variety $X(w)=\overline{BwB/B}$ is smooth if and only if
	$ w$ avoids the two patterns $3412$ and $4231$.
\end{prop}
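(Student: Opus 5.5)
Since this criterion is quoted from \cite{LS-90} and not proved in the paper, here is how I would prove it if I had to. I would follow the Kazhdan--Lusztig/Carrell--Peterson route, which turns smoothness into a count of reflections.

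\textbf{Step 1: reduce to one point.} The singular locus of $X(w)$ is closed and $B$-stable, so it is a union of Schubert varieties $X(v)$ with $v\le w$. Every such union contains the point $eB$. Hence $X(w)$ is smooth if and only if it is smooth at $eB$.

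\textbf{Step 2: the tangent space at $eB$.} I would use the standard description
\[
T_{eB}X(w)=\bigoplus_{\alpha\in\Phi^+,\; s_\alpha\le w}\mathfrak g_{-\alpha}.
\]
In type $A$ this can be checked directly on the open cell $U^-\cdot eB$, using the rank conditions that define $X(w)$. Consequently $X(w)$ is smooth if and only if
\[
N(w):=\#\{\,t=(a\,b)\ \text{a transposition} : t\le w\,\}
\]
equals $\ell(w)=\operatorname{inv}(w)$. Every inversion of $w$ gives a reflection below $w$, and these are distinct, so $N(w)\ge \ell(w)$ always holds. The task is therefore to characterise equality.

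\textbf{Step 3: necessity.} Suppose $w$ contains $3412$ or $4231$ in positions $i<j<k<l$. Using the tableau criterion for Bruhat order, I would produce a transposition $t\le w$ that is not accounted for by an inversion. For example, for $3412$ take $t=(i\,l)$ with its value pattern read off from the occurrence. The point to verify is that the relation $t\le w$, which holds inside the four-letter pattern, survives when the pattern is embedded in $w$. I expect this to be a routine rank-matrix check: the rank conditions for $t$ are implied by those for $w$, because the pattern entries dominate. In $S_4$ itself one checks directly that $N(3412)=5>4$ and $N(4231)=6>5$.

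\textbf{Step 4: sufficiency.} This is the main obstacle. I would argue by induction on $n$ and aim for a fibre-bundle decomposition in the style of Ryan and Gasharov. Let $w$ avoid both patterns and let $n$ occur in position $m$. Avoidance forces one of two shapes:
\begin{itemize}
\item the entries after position $m$ are decreasing, or
\item the symmetric condition holds for $w^{-1}$, and Proposition~\ref{inverse} then transfers smoothness back to $w$.
\end{itemize}
In the first case write $w=w'c$, where $c=s_{n-1}\cdots s_m$ and $w'\in S_{n-1}$ still avoids both patterns. Then the projection $G/B\to G/P$ should make $X(w)$ a bundle over a smooth Schubert variety whose fibre is a projective space or a smaller smooth $X(w')$. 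By induction, both the base and the fibre are smooth, so $X(w)$ is smooth.

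The delicate part is showing that avoidance of $3412$ and $4231$ is exactly what guarantees this decreasing (or inverse-decreasing) tail. That needs a careful case analysis on where the values larger than $w_{m+1}$ sit. If the bundle argument resists, I would fall back on a purely combinatorial proof that $N(w)=\operatorname{inv}(w)$, by induction on the position of $n$.
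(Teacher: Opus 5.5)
The paper does not prove this statement; it only cites \cite{LS-90}, so your outline has to stand on its own. Steps~1 and~2 give a sound framework. The tangent-space formula is a theorem of standard monomial theory rather than a direct check, since you need the rank-condition minors to generate the \emph{reduced} ideal on the big cell.

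The genuine gap is in Step~3, which rests on a false claim from Step~2: inversions of $w$ do not give reflections below $w$. Take $w=231$ in one-line notation. The positions $(1,3)$ form an inversion, but $(1\,3)=321$ has length $3>\ell(w)$. An inversion gives $wt<w$, not $t\le w$. The inequality $N(w)\ge\ell(w)$ is still true, because it is just $\dim T_{eB}X(w)\ge\dim X(w)$. But there is no injection from inversions into $\{t\le w\}$, so exhibiting one $t\le w$ ``not accounted for by an inversion'' proves nothing about $N(w)>\ell(w)$. Your sample choice also fails: in $S_4$, $(1\,4)=4231\not\le 3412$.

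The count has to be localized at a fixed point $xB$ with $x<w$, using $\dim T_{xB}X(w)=\#\{t: xt\le w\}=\ell(x)+\#\{t: x<xt\le w\}$. Your embedding intuition is right in this form: if $u,v$ agree off $S=\{i,j,k,l\}$, the rank criterion gives $u\le v$ iff their flattenings to $S$ compare. This applies to $xt$ versus $w$, never to $t$ versus $w$. Let $x$ be $w$ with the four pattern entries sorted. Then you get at least $5$ (resp.\ $6$) reflections with $x<xt\le w$. You also need $\ell(w)-\ell(x)=4$ (resp.\ $5$), and this fails when other entries sit inside the pattern. For $w=45312$ the only occurrence is at positions $1,2,4,5$, and sorting gives $x=e$ with $\ell(w)-\ell(x)=8$. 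One can take instead $x=14325$: then $\ell(w)-\ell(x)=5$, while the six reflections $(12),(13),(14),(25),(35),(45)$ all satisfy $x<xt\le w$. Step~1 then moves the singularity to $eB$. Alternatively, Theorem~\ref{t:classical.patterns} restricted to $W(A_{n-1})$, together with the check in $S_4$, gives both directions at once.

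For sufficiency, your Gasharov-type strategy is right, and the dichotomy you call delicate is short. Let $w(m)=n$ and $e=w(n)$, and suppose the tail after position $m$ has an ascent $w(p)<w(q)$. Then $w(p)<e$. This is clear if $q=n$; otherwise $n,w(p),w(q),e$ would be a $4231$. Running the same argument on $w^{-1}$, if the inverse condition also fails, some value $a$ with $e<a<n$ lies left of position $m$. Then $a,n,w(p),e$ is a $3412$.

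The bundle step, however, is misdescribed: the fibre is in general a flag variety, not a projective space or $X(w')$. Take $K=\{s_m,\dots,s_{n-1}\}$. Because $w(m)>\dots>w(n)$, $w$ is maximal in $wW_K$. So $X(w)$ is the full preimage of its image in $G/P_K$, with fibre $P_K/B$. The rank conditions force $V_{m-1}\subseteq F_{n-1}$, since the first $m-1$ values of $w$ are $<n$. Hence that image is the partial-flag Schubert variety of $w'$ in $F_{n-1}\cong\mathbb{C}^{n-1}$. This is also the base of the analogous smooth fibration of $X(w')$. So $X(w)$ is smooth iff $X(w')$ is, and induction together with Proposition~\ref{inverse} completes the argument.
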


In general, $w$ is called a smooth element when $X(w)$ is smooth.

Now we recall the famous Robinson--Schensted insertion  algorithm. Some details can be found in  \cite{Sagan}.

\begin{dfn}[Robinson--Schensted insertion algorithm]
For an element  $ w\in\ S_n $, we write  $w=(w_1,...,w_n)$. We associate to $w $ a  Young tableau  $ P(w) $ as follows. Let $ P_0 $ be an empty Young tableau. Assume that we have constructed Young tableau $ P_k $ associated to $ (w_1,\cdots,w_k) $, $ 0\leq k<n $. Then $ P_{k+1} $ is obtained by adding $ w_{k+1} $ to $ P_k $ as follows. Firstly we add $ w_{k+1} $ to the first row of $ P_k $ by replacing the leftmost entry $ x $ in the first row which is \textit{strictly} bigger than $ w_{k+1} $.  (If there is no such an entry $ x $, we just add a box with entry $w_{k+1}  $ to the right side of the first row, and end this process). Then add $ x $ to the next row as the same way of adding $w_{k+1} $ to the first row.  Finally we put $P(w)=P_n$.
Let $Q(w)$ be the recording tableau such that $1,2,...,n$ are placed in the $Q$'s so that shape of $P_k$ equal to the shape of $Q_k$ for all $1\leq k\leq n$. Thus $Q(w)=Q_n$ and $\mathrm{sh}(P(w))=\mathrm{sh}(Q(w))$.
\end{dfn}

Recall that  a {partition} of $n$ is a decreasing sequence $p=[p_1, p_2, \cdots, p_k]$ of positive integers such that $p_1+p_2+\cdots+p_k=n$ for some $1\leq k\leq n$.
For any $w\in S_n$, we denote $p(w) = \mathrm{sh}(P(w))=[p_1,p_2,\dots,p_k]$, where $p_i$ is the number of boxes in the $i$-th row of $P(w)$. Let $\mathrm{Seq}_n (\mathbb{Z})$ denote the set of integer sequences of length $n$. For $x\in \mathrm{Seq}_n (\Gamma)$, we still can apply the RS algorithm to get a Young tableau $P(x)$. See \cite{BX} for more details.



\subsubsection{The root subsystem pattern avoidance criterion}
In this subsection, we recall the smoothness criterion of Schubert varieties via patterns in root subsystems,   due to Billey--Postnikov \cite{BP-05}.





For $w\in W$, define
\[
    \Phi_w:=\Phi^+\cap w\Phi^-=\{\alpha \in \Phi^+\mid w^{-1}\alpha  \in \Phi^-\}.
\]
 Then we have $|\Phi_w|=\ell(w)$, the length of $w$. 





By \cite[Lem. 2.1]{BP-05}, for each $w\in W(\Phi)$ and any root subsystem $\Delta\subset\Phi$, one has $\Phi_w\cap \Delta=\Delta_{\sigma}$ for a single element $\sigma\in W(\Delta)$. 
We define the {\it flattening map} $f_\Delta:W(\Phi)\to W(\Delta)$ by taking $f_\Delta(w)=\sigma$ where $\sigma$ is determined by the condition $\Delta_{\sigma} = \Phi_w\cap \Delta$.

A graph is called a {\it star\/} if it is connected and contains a vertex incident with all edges.  We say that a root system $\Delta$ is {\it stellar\/} if its Dynkin diagram is a star and $\Delta$ is not of type $A_1$ or $A_2$.

\begin{Thm}[{\cite[Thm 2.2]{BP-05}}]\label{BP-main}
Let $G$ be a semisimple simply connected Lie group, 
$B$ be any Borel subgroup, with
corresponding root system $\Phi$ and Weyl group $W=W(\Phi)$.  For
$w\in W$, the Schubert variety $X(w) \subset G/B$ is smooth (rationally
smooth) if and only if, for every stellar root subsystem $\Delta$ in
$\Phi$, the pair $(\Delta^+,f_\Delta(w))$ is smooth (rationally
smooth).
\end{Thm}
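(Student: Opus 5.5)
The plan is to follow the Billey--Postnikov strategy. The two directions need different tools, and we treat the rationally smooth version first. Smoothness is then obtained from the rationally smooth statement: in simply laced types rational smoothness and smoothness coincide \cite{CK03}, and in the remaining types one adds the finitely many rank-two and rank-three checks that distinguish the two notions.

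\emph{Necessity.} Suppose $X(w)$ is (rationally) smooth and let $\Delta\subset\Phi$ be a root subsystem. Set $\sigma=f_\Delta(w)$, so that $\Delta_\sigma=\Phi_w\cap\Delta$ by \cite[Lem. 2.1]{BP-05}. We use the combinatorial form of rational smoothness due to Carrell--Peterson: $X(w)$ is rationally smooth iff for every $v\le w$ the number of reflections $t$ with $vt\le w$ equals $\ell(w)$, i.e.\ the Bruhat graph on $[e,w]$ is regular. The tangent-space version, Kumar's criterion or the description of $T_{e}X(w)$ via roots, plays the same role for smoothness. The key point is that the reflections in $W(\Delta)$ act on the interval $[e,w]\cap W(\Delta)$-cosets compatibly with the flattening map. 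Concretely, for $u\in W(\Delta)$ one shows $u\le_\Delta \sigma$ iff $u\cdot(\text{suitable coset representative})\le w$. Hence the Bruhat graph of $[e,\sigma]$ in $W(\Delta)$ embeds as the ``$\Delta$-directions'' of the Bruhat graph of $[e,w]$ at a suitable vertex. A deficiency of edges at some vertex of $[e,\sigma]$ therefore produces a deficiency in $[e,w]$, and so non-smoothness of $(\Delta^+,\sigma)$ forces non-smoothness of $X(w)$.

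\emph{Sufficiency.} We argue by induction on the rank, using parabolic factorizations. For a leaf $s$ of the Dynkin diagram put $J=\Pi\setminus\{s\}$ and write $w=vu$ with $v\in W^J$ and $u\in W_J$. This is called a BP decomposition if $\mathrm{supp}(v)\cap J\subseteq D_L(u)$; in that case the Poincar\'e polynomial factors as $p_w(t)=p^J_v(t)\,p_u(t)$. Palindromicity of $p_w$ then follows from palindromicity of both factors. The factor $p_u$ is handled by induction, since the hypothesis passes to $W_J$ because stellar subsystems of $\Phi_J$ are stellar subsystems of $\Phi$. The factor $p^J_v$ lives in the maximal-parabolic Grassmannian setting, where (rationally) smooth Schubert varieties are explicitly classified. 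Thus the proof reduces to two claims. First, if $w$ avoids all bad stellar patterns, then there is a leaf $s$ for which either $w$ or $w^{-1}$ admits a BP decomposition; here Proposition~\ref{inverse} lets us pass freely between $w$ and $w^{-1}$. Second, the resulting $v$ is (rationally) smooth in $G/P_J$, again forced by pattern avoidance in small subsystems containing $s$.

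The main obstacle is the first claim: the existence of a BP decomposition at some leaf. To prove it, we would take its contrapositive. Suppose no leaf works for $w$ or $w^{-1}$. The failure of the support condition at each leaf exhibits specific roots in $\Phi_w$ and specific roots outside it. These witnessing roots span a subsystem whose diagram is a star (types $A_3,B_2,B_3,C_3,D_4,G_2$, or a star of rank $\le 4$). By checking the finite list directly, the flattening $f_\Delta(w)$ is then a bad pattern. This requires a case analysis along the Dynkin diagram, and we expect it to be the bulk of the work. To keep the finite checks tractable, we would follow the computer verification that Billey--Postnikov perform on the stellar systems.
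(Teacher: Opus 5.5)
The paper gives no proof of this statement; it imports it from Billey--Postnikov. Your sketch therefore has to stand on its own, and in the necessity direction it breaks at the transfer step. By Deodhar's inequality every $v\le w$ satisfies $\#\{t : vt\le w\}\ge \ell(w)$, and the same holds in $W(\Delta)$. So failure of rational smoothness of $(\Delta^+,\sigma)$ is an \emph{excess} of edges at some $u\le_\Delta\sigma$, never a ``deficiency''. Grant your identification of $[e,\sigma]_\Delta$ with $\{y\in W(\Delta)x : y\le w\}$, where $x$ is the minimal element of the coset $W(\Delta)w$. An excess of edges inside this coset at $y=ux$ then gives an excess in $[e,w]$ only if at least $\ell(w)-\ell_\Delta(\sigma)=|\Phi_w\setminus\Delta|$ edges at $y$ leave the coset. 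This holds, with equality, at the top vertex $y=w$. Nothing in your argument gives it at lower vertices, and Deodhar's inequality bounds only total degrees, so a priori an excess inside the coset could be offset by a shortfall in the transverse directions. Supplying that transverse control is exactly the geometric input used in the literature (Billey--Braden). Let $T_\Delta$ be the identity component of $\bigcap_{\alpha\in\Delta}\ker\alpha$. Because $\Delta=\Phi\cap\operatorname{span}\Delta$, the component of $(G/B)^{T_\Delta}$ through $wB$ is a copy of $G_\Delta/B_\Delta$, and it meets $X(w)$ in $X_\Delta(\sigma)$. Torus-fixed loci of smooth varieties are smooth, and of rationally smooth varieties are rationally smooth (by Brion's theorem, or the Billey--Braden lower bound for Kazhdan--Lusztig polynomials). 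Your one-line appeal to Kumar's criterion for the smooth case is not an argument either. Outside simply laced type, smoothness cannot be read off from $T$-curves ($s_2s_1s_2\in W(B_2)$ is rationally smooth but singular), and you give no reason why a tangent-space criterion is compatible with flattening.

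In the sufficiency direction your architecture (a BP decomposition at a leaf, $p_w=p^J_v\,p_u$, induction on rank) is the right one. However, the two claims that carry it are only asserted and deferred to an unspecified case analysis and computer check: that avoiding the bad stellar patterns forces, at some leaf, a BP decomposition of $w$ or $w^{-1}$; and that it forces the Grassmannian factor $v$ to be (rationally) smooth. The contrapositive you propose has no justification, since there is no reason the witnesses of failure at the various leaves should lie in one stellar subsystem of rank at most $4$. The inductive step is also misstated. For $\Delta\subseteq\Phi_J$ one has $\Phi_w\cap\Delta=(\Phi_v\cap\Delta)\sqcup(v\Phi_u\cap\Delta)$, and $\Phi_v$ can meet $\Phi_J$, so $f_\Delta(u)\neq f_\Delta(w)$ in general. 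For example, in $A_3$ with $J=\{s_1,s_2\}$, the factorization $w=vu$ with $v=s_2s_3$, $u=s_2s_1$ is a BP decomposition, yet $\Phi_w\cap\Phi_J=\{\alpha_2\}$ while $\Phi_u=\{\alpha_2,\alpha_1+\alpha_2\}$. What does hold is $\Phi_{w^{-1}}\cap\Phi_J=\Phi_{u^{-1}}$. So you must induct on $u^{-1}$, after proving that pattern avoidance is invariant under $w\mapsto w^{-1}$ (using $\Delta\mapsto w^{-1}\Delta$); Proposition~\ref{inverse} transfers only the conclusion, not the hypothesis. Finally, upgrading rational smoothness to smoothness by ``finitely many rank-two and rank-three checks'' presupposes that the difference between the two notions is detected in small stellar subsystems. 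That is itself part of what the theorem asserts.
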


If $\Delta$ is a root subsystem of $\Phi$ and $\sigma = f_\Delta(w)$, then we say that the element $w$ in $W(\Phi)$ \textit{contains the pattern} $(\Delta^+,\sigma)$.

We say that an element $w$ \textit{avoids the pattern} $(\Delta^+,\sigma)$ if $w$ does not contain a pattern isomorphic to $(\Delta^+,\sigma)$.

We write $[a,b,\dots,c]$ to denote the collection of words $1,a,b,\dots,c$ and use $[a,b,c]\,d$ to represent the four words $d, ad, bd, cd$.

\begin{Thm}[{\cite[Thm. 2.4]{BP-05}}] \label{BP-patterns}
Let $\Phi$ be an arbitrary root system.  The Schubert variety 
$X(w)$, $w\in W(\Phi)$, is smooth if and only if $w$ avoids the  patterns
listed in the following table:
\smallskip
\begin{center}
\begin{tabular}{c|c|c}
\hline
\text{stellar type}   &  \text{forbidden patterns} & \# \text{patterns} \\
\hline
\mystrut
$B_2$  & $s_2s_1s_2$ & $1$ \\[.05in]
\hline
\mystrut
$G_2$ & $[s_2]s_1s_2s_1[s_2]$,
      $s_1s_2s_1s_2s_1$ & $5$
   \\[.05in]
\hline
\mystrut
$A_3$ & $s_2 s_1 s_3 s_2$, $s_1 s_2 s_3 s_2 s_1$ & $2$ \\[.05in]
\hline
\mystrut
$B_3$ &
$s_2 s_1 s_3 s_2$, 
$s_1 s_2 s_3 s_2 s_1[s_3, s_3 s_2, s_2 s_3, s_2 s_3 s_2]$ & $6$
\\[.05in]
\hline
\mystrut
$C_3$ & 
$[s_3]s_2 s_1 s_3 s_2[s_3]$,
$s_3 s_2 s_1 s_2 s_3$,
$s_1 s_2 s_3 s_2 s_1  s_3 s_2 s_3$
& $6$ 
\\[.05in]
\hline
\mystrut
$D_4$ &  $s_2 s_1 s_3 s_4 s_2$ & $1$ \\[.05in]
\hline
\end{tabular}
\end{center}
\medskip
\label{th:smooth-general}
\end{Thm}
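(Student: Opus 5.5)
Since Theorem~\ref{BP-patterns} is quoted from \cite[Thm.~2.4]{BP-05}, the plan is to derive it from Theorem~\ref{BP-main} together with a finite computation in each stellar root system. First I would enumerate the stellar root systems. In a finite-type Dynkin diagram every vertex has degree at most $3$, and a star must be connected with one vertex meeting all edges. Excluding $A_1$ and $A_2$, the only possibilities are $B_2=C_2$ and $G_2$ (two vertices), $A_3$, $B_3$ and $C_3$ (a path of length two centred at the middle node), and $D_4$ (centre of degree $3$). Any longer path such as $A_4$ or $B_4$ is not a star. This is exactly the list of rows in the table.

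Next I would record the compatibility of flattening with inclusions. If $\Delta'\subset\Delta\subset\Phi$, then $\Phi_w\cap\Delta'=(\Phi_w\cap\Delta)\cap\Delta'=\Delta_{f_\Delta(w)}\cap\Delta'$. Hence $f_{\Delta'}(w)=f_{\Delta'}(f_\Delta(w))$, so pattern containment is transitive. By Theorem~\ref{BP-main}, $X(w)$ is singular if and only if some stellar $\Delta\subset\Phi$ has $f_\Delta(w)$ singular in $W(\Delta)$. Choosing such a $\Delta$ of minimal rank, every proper stellar subsystem $\Delta'\subsetneq\Delta$ has $f_{\Delta'}(f_\Delta(w))$ smooth. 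Therefore it suffices to prove the following claim.

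For each stellar $\Delta$, the singular $\sigma\in W(\Delta)$ all of whose flattenings to proper stellar subsystems are smooth are exactly the elements listed in the table for that type. The elements listed in the table are themselves singular, so conversely any $w$ containing one of them has $X(w)$ singular.

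To verify the claim, for each of the six types I would compute the smooth locus of $W(\Delta)$ by a direct criterion. In the simply laced cases $A_3$ and $D_4$, smoothness coincides with rational smoothness \cite{CK03}, so palindromicity of $p_\sigma(t)$ suffices; equivalently, $\#\{\beta\in\Delta^+ : s_\beta\le\sigma\}=\ell(\sigma)$. In the non-simply laced cases $B_2$, $G_2$, $B_3$ and $C_3$, palindromicity is not enough, so I would use Kumar's tangent-space criterion, or the explicit description of the tangent space at the base point of $X(\sigma)$ via the Carrell--Peterson theorem. For $B_2$ this gives that $s_2s_1s_2$ alone is singular, with the roles of long and short roots fixed by the paper's convention. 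For $G_2$ one obtains the five listed words. Then for $A_3$, $B_3$, $C_3$ and $D_4$ I would discard every singular $\sigma$ that flattens onto a singular element of some sub-$B_2$, sub-$A_3$, or similar subsystem. The survivors should be exactly the listed words, giving $2$, $6$, $6$ and $1$ patterns respectively.

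The main obstacle is this last step, which is pure bookkeeping over $|W(B_3)|=|W(C_3)|=48$ and $|W(D_4)|=192$ elements. It requires care in identifying all embedded stellar subsystems, which need not be standard parabolic: for example, $B_3$ contains long-root $A_3$ subsystems and $C_3$ contains $B_2$ subsystems in several positions. It also requires distinguishing smooth from merely rationally smooth elements, for instance $s_1s_2s_1$-type elements in $B_2$ and $G_2$. I would organize this step as a computer check, in the spirit of the authors' Python implementation, cross-checked by hand against Billey's type $B/C$ pattern lists \cite{Bi-98}.
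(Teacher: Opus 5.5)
The paper gives no proof of this statement; it quotes it from \cite[Thm.~2.4]{BP-05}. Your reduction is the standard way Billey--Postnikov pass from their Theorem~2.2 to Theorem~2.4, so the strategy is right. It combines Theorem~\ref{BP-main}, the transitivity $f_{\Delta'}(w)=f_{\Delta'}(f_\Delta(w))$ for $\Delta'\subset\Delta$, and a finite search for the minimal singular elements of the six stellar Weyl groups.

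One step needs repair. You must take $\Delta$ minimal with respect to inclusion (for instance with $|\Delta|$ minimal), not of minimal rank. The reason is that $B_3$ and $C_3$ contain stellar subsystems of type $A_3$ of the same rank: the long, respectively short, roots $\pm e_i\pm e_j$. You yourself point this out in your last paragraph. Under ``minimal rank'' you may pick such a $B_3$ or $C_3$ while its $A_3$ already flattens $w$ to $3412$ or $4231$. Then $f_\Delta(w)$ is singular but not minimal, and it need not appear in the $B_3$ or $C_3$ row of the table.

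In the non-simply-laced cases, the alternative you mention alongside Kumar's criterion does not work. The span of the $T$-stable curve directions $\mathfrak{g}_{-\beta}$ with $s_\beta\le\sigma$ can be strictly smaller than $T_eX(\sigma)$. For example, for $\sigma=s_2s_1s_2$ in $B_2$ this span is $3$-dimensional, equal to $\ell(\sigma)$, yet $X(\sigma)$ is singular. So there you need a genuine tangent-space computation such as Kumar's criterion.

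With these adjustments the argument is complete modulo the enumeration itself. All the content beyond Theorem~\ref{BP-main} lies in that enumeration, which you, like the original source, leave to a computer check.
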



It is worth emphasizing that for Weyl groups of exceptional types, the application of Theorem \ref{BP-patterns} is more suitable, whereas for Weyl groups of classical types, the utilization of patterns listed in \cite{Bi-98} is more effective. 
\begin{dfn}
Given any sequence $a_1, a_2, \cdots, a_k$ of distinct non-zero real
numbers, denote $-a$ by $\bar{a}$ and define ${\rm fl}(a_1, a_{2}, \cdots, a_{k})$ to be the unique element
$b=(b_1, \cdots, b_k)$ in $W({B}_k)$ such that
\begin{itemize}
\item 
both $a_j$ and $b_j$ have the same sign.
\item for all $i,j$, we have $|b_i|<|b_j|$ if and only if $|a_i|<|a_j|$.
\end{itemize}
\end{dfn}
For example, ${\rm fl}(\bar{5}, 4, \bar{6}, 2)=\bar{3}2 \bar{4} 1$.  Any
sequence containing the subsequence $\bar{6}, 3, \bar{7}, 1$ does not
avoid the pattern $\bar{3} 2 \bar{4} 1$.

\begin{Thm}\label{t:classical.patterns}{\rm \cite{BP-05}}
Let $W$ be one of the Weyl groups $W(A_{n-1})$, $W(B_{n})$, $W(C_{n})$ or
$W(D_{n})$ and let $w \in W$.  
Then $X(w)$ is (rationally) smooth if and
only if for each subsequence $1\leq i_1<i_2<i_3 <i_4 \leq n$,
${\rm fl}(w_{i_1},w_{i_2},w_{i_3},w_{i_4})$ corresponds to a (rationally)
smooth Schubert variety.
\end{Thm}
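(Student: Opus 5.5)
The plan is to deduce this from the general root-subsystem criterion, Theorem \ref{BP-main}, using the fact that in classical types every stellar subsystem is supported on at most four coordinates. I realize each classical root system in the standard way, with $\Phi \subset \{\pm e_i \pm e_j,\ \pm e_i,\ \pm 2e_i\}$. I view $w$ as a (signed) permutation $w=(w_1,\dots,w_n)$.

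For an index set $I=\{i_1<i_2<i_3<i_4\}$, let $\Phi_I$ be the subsystem of roots involving only $e_{i_1},\dots,e_{i_4}$. It is of type $A_3$, $B_4$, $C_4$ or $D_4$, matching the type of $\Phi$. The first key step is to identify $\Phi_w\cap\Phi_I$ with the inversion set of ${\rm fl}(w_{i_1},\dots,w_{i_4})$ under the obvious identification $\Phi_I\cong\Phi(X_4)$. This holds because whether $w^{-1}\alpha\in\Phi^-$ for $\alpha=e_{i_a}\pm e_{i_b}$, $e_{i_a}$ or $2e_{i_a}$ depends only on the signs of the relevant entries and on the relative order of their absolute values. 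Flattening preserves exactly these data.

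Consequently $f_{\Phi_I}(w)={\rm fl}(w_{i_1},\dots,w_{i_4})$. In type $A$ the flattening is the usual standardization in $S_4$. In type $D$ I take the element of $W(D_4)$ whose inversion set is the restriction to the $D$-roots; its existence is guaranteed by \cite[Lem. 2.1]{BP-05}. I also need transitivity of flattening: for $\Delta\subset\Phi_I$,
\[
\Phi_w\cap\Delta=(\Phi_w\cap\Phi_I)\cap\Delta, \qquad\text{so}\qquad f_\Delta(w)=f_\Delta\bigl(f_{\Phi_I}(w)\bigr).
\]

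The second key step is geometric: every stellar subsystem $\Delta\subset\Phi$ lies in some $\Phi_I$ with $|I|\le 4$. In classical types the stellar subsystems are of type $B_2$, $A_3$, $B_3$, $C_3$ and $D_4$, since $G_2$ cannot occur. Each has rank at most $4$, and a rank-$r$ subsystem of a classical root system is supported on the span of at most $r$ of the coordinate vectors $e_i$ appearing in its roots. When fewer than $4$ indices are used, I enlarge $I$ arbitrarily, assuming $n\ge4$; small $n$ is trivial.

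Given both steps, I apply Theorem \ref{BP-main} twice. Suppose $X(w)$ is smooth. Every stellar $\Delta\subset\Phi_I$ is stellar in $\Phi$, so $(\Delta^+,f_\Delta(w))$ is smooth. By transitivity and Theorem \ref{BP-main} applied inside $W(\Phi_I)$, the element ${\rm fl}(w_{i_1},\dots,w_{i_4})$ is smooth. Conversely, suppose all flattenings are smooth. For each stellar $\Delta\subset\Phi$, choose $I$ with $\Delta\subset\Phi_I$. Theorem \ref{BP-main} for $\Phi_I$ shows $(\Delta^+,f_\Delta(f_{\Phi_I}(w)))=(\Delta^+,f_\Delta(w))$ is smooth, and hence $X(w)$ is smooth. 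The rationally smooth version is identical, using the rational form of Theorem \ref{BP-main}.

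I expect the main obstacle to be the first key step, namely the bookkeeping for $\Phi_w\cap\Phi_I$. One must check carefully, case by case for $e_a-e_b$, $e_a+e_b$ and the short or long roots, that the inversion condition depends only on signs and relative absolute order. In type $D$ one must also handle the parity issue, since ${\rm fl}$ may have an odd number of bars; this is resolved by restricting the $B_4$ inversion set to $D_4$ roots. The containment of stellar subsystems in four coordinates is a short rank argument by comparison.
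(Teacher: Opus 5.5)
The paper gives no proof of this statement; it is quoted from \cite{BP-05}, where it is derived from Theorem~\ref{BP-main} essentially along the lines you propose. Your transitivity argument and your two-way use of Theorem~\ref{BP-main} are sound. However, two supporting claims are false as written, although both can be repaired.

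\textbf{The identification of $f_{\Phi_I}(w)$.} The claim $f_{\Phi_I}(w)={\rm fl}(w_{i_1},\dots,w_{i_4})$ fails under the conventions in force ($\Phi_w=\Phi^+\cap w\Phi^-$ and $w_i=w(i)$). Since $w^{-1}e_p=\pm e_j$ exactly when $w_j=\pm p$, membership of $\alpha\in\Phi_I^+$ in $\Phi_w$ is controlled by the entries of $w$ whose \emph{absolute values} lie in $I$. It is not controlled by the entries in \emph{positions} $I$. For example, take $w=(5,1,2,3,4)\in S_5$ and $I=\{1,2,3,4\}$. Then $w^{-1}$ sends $e_i\mapsto e_{i+1}$ for $i\le 4$, so $\Phi_w\cap\Phi_I=\varnothing$ and $f_{\Phi_I}(w)=e$, whereas ${\rm fl}(5,1,2,3)=4123$. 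The correct statement is $f_{\Phi_I}(w)={\rm fl}(w_{j_1},\dots,w_{j_4})$, where $\{j_1<\dots<j_4\}=\{j : |w_j|\in I\}$. The map $I\mapsto J$ is a bijection on four-element subsets of $\{1,\dots,n\}$. Hence ranging over all $I$ is the same as ranging over all position subsequences, and after this correction the rest of your argument goes through verbatim.

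\textbf{Stellar subsystems in four coordinates.} Your justification that each stellar $\Delta$ lies in some $\Phi_I$ with $|I|\le 4$ rests on a false claim: a rank-$r$ subsystem need not be supported on $r$ coordinates. The $A_3$ subsystem $\{e_a-e_b : a\neq b\in\{1,2,3,4\}\}$ has rank $3$ but uses four coordinates, and in type $A$ such subsystems are the only stellar ones. The conclusion is still correct, but it needs a different argument.

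\emph{Types $B_2$, $A_3$, $B_3$, $C_3$.} Roots with disjoint coordinate supports are orthogonal. Order the simple roots of a connected $\Delta$ so that each is non-orthogonal to an earlier one. Then each new simple root adds at most one coordinate, so $\Delta$ uses at most $r+1$ coordinates, which settles these four types.

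\emph{Type $D_4$.} One more step is needed. Say the central simple root is $e_a-e_b$. Each of the three mutually orthogonal outer simple roots meets it in exactly one coordinate. Two outer roots meeting it in the same coordinate must also share their second coordinate, since otherwise their inner product is $\pm1$. Hence at most four coordinates occur. Alternatively, you can cite the standard description of root subsystems of classical root systems.
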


\subsection{Associated variety}
Let $\mathfrak{g}$ be a simple complex Lie algebra. Let $M$ be a  {finitely} generated $U(\mathfrak{g})$-module. Fix a finite dimensional generating subspace $M_0$ of $M$. Let $U_{n}(\mathfrak{g})$ be the standard filtration of $U(\mathfrak{g})$. Set $M_n=U_n(\mathfrak{g})\cdot M_0$ and
\(
\text{gr} (M)=\bigoplus\limits_{n=0}^{\infty} \text{gr}_n M,
\)
where $\text{gr}_n M=M_n/{M_{n-1}}$. Thus $\text{gr}(M)$ is a graded module of $\text{gr}(U(\mathfrak{g}))\simeq S(\mathfrak{g})$.



\begin{dfn}
	The  \textit{associated variety} of $M$ is defined by
	\begin{equation*}
	V(M):=\{X\in \mathfrak{g}^* \mid f(X)=0 \text{ for all~} f\in \operatorname{Ann}_{S(\mathfrak{g})}(\operatorname{gr} M)\}.
	\end{equation*}
\end{dfn}

The above  definition is independent of the choice of $M_0$ (e.g., \cite{NOT}). 

\begin{dfn} Let $\mathfrak{g}$ be a finite-dimensional semisimple Lie algebra. Let $I$ be a two-sided ideal in $U(\mathfrak{g})$. Then $\text{gr}(U(\mathfrak{g})/I)\simeq S(\mathfrak{g})/\text{gr}I$ is a graded $S(\mathfrak{g})$-module. Its annihilator is $\text{gr}I$. We define its associated variety by
	$$V(I):=V(U(\mathfrak{g})/I)=\{X\in \mathfrak{g}^* \mid p(X)=0\ \mbox{for all $p\in {\text{gr}}I$}\}.
	$$
\end{dfn}

Recall that  $L_w$ with $ w\in W$  is a simple highest weight $\mathfrak{g}$-module of highest weight $-w\rho-\rho$.  We denote $I_w=\Ann(L_w)$.  Borho--Brylinski \cite{BoB1} proved that the associated variety of  $I_w$ is  the closure of a single nilpotent orbit $\mathcal{O}_w$. Thus, we also write  $V(I_w):= V(\Ann (L_w))=\overline{\mathcal{O}}_w$.

 Furthermore, we have the following proposition.

\begin{prop}[\cite{Jo85}]
	Let $\mathfrak{g}$ be a reductive Lie algebra and $I$ be a primitive ideal in $U(\mathfrak{g})$.Then $V(I)$ is the closure of a single nilpotent coadjoint orbit $\mathcal{O}_I$ in $\mathfrak{g}^*$. In particular, for a highest weight module $L(\lambda)$, we have $V(\mathrm{Ann} (L(\lambda)))=\overline{\mathcal{O}}_{\mathrm{Ann}(L(\lambda))}$.
\end{prop}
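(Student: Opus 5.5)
The plan is to follow the Borho--Brylinski--Joseph strategy: first show that $V(I)$ lies in the nilpotent cone and is a finite union of orbit closures, and then use primeness of $U(\mathfrak{g})/I$ to rule out more than one maximal orbit. Only the second step is deep.

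Write $A=U(\mathfrak{g})/I$ with the filtration induced from the standard one. Since $I$ is primitive, $I\cap Z(\mathfrak{g})$ is the kernel of a central character, so $I\cap Z(\mathfrak{g})$ has finite codimension in $Z(\mathfrak{g})$. Hence $\operatorname{gr} I$ contains all $G$-invariant polynomials of positive degree on $\mathfrak{g}^*$, and therefore $V(I)\subseteq\mathcal N$, the nilpotent cone. The ideal $I$ is $\operatorname{ad}\mathfrak{g}$-stable, so $\operatorname{gr} I$ is $G$-stable and homogeneous. Thus $V(I)$ is a closed, conic, $G$-stable subset of $\mathcal N$. Since $\mathcal N$ has finitely many $G$-orbits, $V(I)=\overline{\mathcal O_1}\cup\dots\cup\overline{\mathcal O_k}$ with the $\mathcal O_j$ pairwise non-comparable in the closure order. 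It remains to prove $k=1$.

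Next I would show that every component has the same dimension, namely $d=\operatorname{GKdim} A$. The tool is the Borho--Kraft fact that in a prime Noetherian algebra with a good filtration, every nonzero two-sided ideal $J$ satisfies $\operatorname{GKdim}(A/J)<\operatorname{GKdim}A$. Suppose some $\overline{\mathcal O_j}$ had dimension $<d$. Let $Z'$ be the union of the components of dimension $d$, and consider the two-sided ideal $J\subseteq A$ of elements $a$ whose bimodule $AaA$ has associated variety inside $Z'$. I expect $J\neq 0$ and $\operatorname{GKdim}(A/J)=d$, which contradicts Borho--Kraft. This uses Gabber's integrability theorem only to know that supports of sub-bimodules are unions of the $\overline{\mathcal O_j}$.

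The main step is the case $\mathcal O_1\neq\mathcal O_2$ with both of dimension $d$. Here I would build two nonzero two-sided ideals $J_1,J_2\subseteq A$ with $V(AJ_iA)\subseteq\overline{\mathcal O_i}$ (as bimodules) and $J_1J_2=0$, contradicting primeness. The first attempt would be the ``torsion'' construction. Take $J_i$ to be the largest sub-bimodule of $A$ whose associated variety avoids $\mathcal O_{3-i}$. Then $J_1J_2$ is a sub-bimodule whose variety lies in $\overline{\mathcal O_1}\cap\overline{\mathcal O_2}$, which has dimension $<d$. So $J_1J_2$ lies in the ideal of elements of GK-dimension $<d$, and that ideal is $0$ by Borho--Kraft applied to $A$ prime. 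The delicate point is proving $J_i\neq0$: one must lift a symbol $\bar f\in S(\mathfrak{g})/\operatorname{gr}I$ that vanishes on $\mathcal O_{3-i}$ but not on $\mathcal O_i$ to a genuine sub-bimodule. I would try to do this by localizing the graded module at the generic point of $\overline{\mathcal O_{3-i}}$ and using exactness of $\operatorname{gr}$ on good filtrations.

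If this purely algebraic route stalls, the fallback is geometric. First reduce to regular integral central character by Jantzen translation, which preserves $V(\operatorname{Ann})$, so that $I=I_w$. Then use Borho--Brylinski: $V(I_w)$ is the image under the moment map of the characteristic variety of the localization of $L_w$ on $G/B$, which is a union of conormal bundles to Schubert cells. Irreducibility of the image then follows from the fact that $w\mapsto\mathcal O_w$ is constant on two-sided cells via Springer theory.
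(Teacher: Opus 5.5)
The paper gives no proof of this statement; it quotes Joseph \cite{Jo85}. Your first step is fine: $V(I)$ is a closed, conic, $G$-stable subset of the nilpotent cone, hence a finite union of orbit closures. The main step cannot work, and the reason is structural.

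In the prime Noetherian ring $A=U(\mathfrak g)/I$, every nonzero two-sided ideal $J$ is essential as a left ideal: if $J\cap L=0$ for a left ideal $L\neq 0$, then $JL=0$, so $J\cdot(LA)=0$. By Goldie's theorem $J$ therefore contains a regular element $c$. Then $A\cong Ac\subseteq J$ as left modules, so $V(J)=V(A)=V(I)$. Consequently the largest ideal whose associated variety avoids $\mathcal O_{3-i}$ is always $J_i=0$, and no localization or symbol-lifting can make it nonzero. The same observation shows that the ideal $J$ in your equidimensionality step is $0$, so that argument also yields nothing. Equidimensionality itself is true, e.g.\ by Gabber's equidimensionality theorem, since $A$ is GK-pure.

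Primeness only tells you that every nonzero ideal sees all of $V(I)$. That is perfectly compatible with $V(I)$ being reducible. In fact no argument using only primeness, GK-dimension and good filtrations can succeed: $A=\mathbb{C}[x,x^{-1}]$ with $F_nA=\operatorname{span}\{x^j : |j|\le n\}$ is a domain with $\operatorname{gr}A\cong\mathbb{C}[X,Y]/(XY)$, whose variety is two lines of the same dimension.

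So genuinely Lie-theoretic input is needed, and your fallback does not supply it. There are two problems.

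\begin{itemize}
\item Translation functors only connect central characters that differ by integral weights. A nonintegral central character therefore cannot be moved to an integral one; the nonintegral case needs separate treatment through the integral Weyl group, which is what \cite{Jo85} adds to the Borho--Brylinski result for integral central character.
\item In the integral case, the characteristic variety of the localization of $L_w$ is a union of closures of conormal bundles to several Schubert cells $ByB/B$. Their moment-map images are in general different orbit closures, and the whole point is to show there is a unique maximal one. Invoking that ``$w\mapsto\mathcal O_w$ is constant on two-sided cells'' is circular, since $\mathcal O_w$ is only defined once $V(I_w)$ is known to be a single orbit closure.
\end{itemize}

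The missing ingredient is what the known proofs provide: $W$-equivariance of characteristic cycles (or of Goldie rank and orbital-variety polynomials), combined with the Springer correspondence, which forces the top-dimensional part of $V(I)$ to come from a single orbit.
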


Let $G$ be a connected semisimple finite dimensional complex algebraic group with Lie algebra $\mathfrak{g}$. We fix some triangular decomposition $\mathfrak{g}=\mathfrak{n }\oplus \mathfrak{h} \oplus \mathfrak{n}^{-}$. Let $\mathcal{O}$ be a nilpotent $G$ orbit. The   irreducible components of $\overline{\mathcal{O}}\cap \mathfrak{n}$  are called {\it orbital varieties} associated to $\mathcal{O}$.  {After Steinberg \cite{St76}}, an orbital variety has the following form
$$\mathcal{V}(w)=\overline{B(\mathfrak{n}\cap{w(\mathfrak{n})} )}$$ for some  $w$ in the Weyl group $ W$ of $\mathfrak{g}$, where $B$ is the Borel subgroup of $G$ corresponding the Borel subalgebra $\mathfrak{b}$ of $\mathfrak{g}$.

\begin{prop}[{\cite[II. 9.8]{Sp82}}]\label{geometriccell}
    Suppose  $x,y\in S_n$. Then we have $\mathcal{V}(x)=\mathcal{V}(y)$ if and only if $x\sim_{R} y$.
\end{prop}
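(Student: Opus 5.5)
The plan is to reduce both sides of the equivalence to the same combinatorial invariant, a standard Young tableau, and compare. On the geometric side I would use Spaltenstein's parametrization of orbital varieties in type $A$ by tableaux. On the cell side I would use the Robinson--Schensted description of Kazhdan--Lusztig right cells in $S_n$.

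\textbf{Step 1 (a tableau for each $X\in\mathfrak n$).} Let $V_k=\mathrm{span}(e_1,\dots,e_k)$ be the standard $B$-stable flag. Each $X\in\mathfrak n$ preserves every $V_k$, so $X|_{V_k}$ is a nilpotent endomorphism of $V_k$. Its Jordan type is a partition $\mu^{(k)}$ of $k$. The partitions $\mu^{(1)}\subset\mu^{(2)}\subset\cdots\subset\mu^{(n)}$ each add one box, so the chain defines a standard tableau $T(X)$ of shape $\mu^{(n)}$. Since $B$ preserves the flag, $T(bXb^{-1})=T(X)$. Hence $T$ is constant on $B$-orbits. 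Because the Jordan type of a restriction is lower semicontinuous, $T$ takes a constant generic value $T(\mathcal V)$ on a dense open subset of each irreducible $B$-stable subvariety $\mathcal V$.

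For a nilpotent orbit $\mathcal O$ of shape $p$, set $\mathcal O_T=\{X\in\mathcal O\cap\mathfrak n : T(X)=T\}$. I would prove by induction on $n$, using the fibration $X\mapsto X|_{V_{n-1}}$, that each $\mathcal O_T$ is irreducible of dimension $\tfrac12\dim\mathcal O$. The closures $\overline{\mathcal O_T}$ are then exactly the irreducible components of $\overline{\mathcal O}\cap\mathfrak n$, and $\mathcal V\mapsto T(\mathcal V)$ is a bijection from orbital varieties of $\mathcal O$ to standard tableaux of shape $p$. I expect this dimension and irreducibility count, which is Spaltenstein's argument, to be the main obstacle. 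If needed I would simply quote it from \cite{Sp82}.

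\textbf{Step 2 (computing $T(\mathcal V(w))$).} The space $\mathfrak n\cap w(\mathfrak n)$ is spanned by the $E_{ij}$ with $i<j$ and $w^{-1}(i)<w^{-1}(j)$. This is the incidence space of a two-dimensional poset $P_w$ on $\{1,\dots,n\}$. Its restriction to $V_k$ is the incidence space of the induced subposet on $\{1,\dots,k\}$. By the Gansner--Saks form of Greene--Kleitman, a generic nilpotent element of this space has Jordan type equal to the Greene shape of the poset. By Greene's theorem, that shape is the RS shape of the corresponding subsequence of $w^{-1}$, namely the relative order of the positions $w^{-1}(1),\dots,w^{-1}(k)$.

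Since $B$-saturation does not change the generic value of $T$, $T(\mathcal V(w))$ is the tableau recorded by these successive shapes. I would identify it with $P(w)$, equivalently $Q(w^{-1})$, up to the paper's convention for $w$ versus $w^{-1}$. Concretely, I would check that inserting the values $1,\dots,k$ in order tracks the RS shapes in the same way as $P(w)$ restricted to entries $\le k$. This restriction property of the insertion tableau is standard.

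\textbf{Step 3 (cells).} By Kazhdan--Lusztig \cite{KL79} and Knuth's theorem, $x\sim_R y$ in $S_n$ if and only if $x$ and $y$ are connected by Knuth relations on the appropriate side, if and only if they have the same RS tableau on that side ($P$ or $Q$, matching the convention for $\sim_R$). After fixing conventions consistently with Step 2, we get $\mathcal V(x)=\mathcal V(y)$ iff $T(\mathcal V(x))=T(\mathcal V(y))$ by the bijectivity in Step 1, iff $P(x)=P(y)$ by Step 2, iff $x\sim_R y$ by this step.

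The only delicate bookkeeping is making the left/right conventions agree: the definition $\mathcal V(w)=\overline{B(\mathfrak n\cap w\mathfrak n)}$ against the side on which $\sim_R$ is defined. I would settle this by checking the case $n=3$.
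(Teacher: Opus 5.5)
Your argument is correct, but it is not the paper's route: the paper gives no proof of this statement and simply quotes it from Spaltenstein \cite[II.~9.8]{Sp82}. You instead split the result into three separate inputs.
\begin{itemize}
\item The geometric input is Spaltenstein's irreducibility of each $\mathcal O_T$, together with Steinberg's theorem \cite{St76} that each $\mathcal V(w)$ is an irreducible component of $\overline{\mathcal O}\cap\mathfrak n$, where $\mathcal O$ is its generic orbit. These two facts force $\mathcal V(w)=\overline{\mathcal O_{T(\mathcal V(w))}}$. The implication $\mathcal V(x)=\mathcal V(y)\Rightarrow T(\mathcal V(x))=T(\mathcal V(y))$ is trivial, so the full bijection of Step~1, including the equidimensionality of $\overline{\mathcal O}\cap\mathfrak n$, is more than you need.
\item The combinatorial computation $T(\mathcal V(w))=P(w)$ via Gansner--Saks and Greene is sound as written. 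Indeed $\mathfrak n\cap w\mathfrak n=\mathrm{span}\{E_{ab} : a<b,\ w^{-1}(a)<w^{-1}(b)\}$, and chains in the induced subposet on $\{1,\dots,k\}$ are exactly the increasing subsequences of $w$ that use only values $\le k$. Hence the successive generic shapes are those of $P(w)$ restricted to entries $\le k$.
\item The cell input is the Kazhdan--Lusztig/Knuth description of cells in $S_n$.
\end{itemize}

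The convention question you defer to an $n=3$ check can be settled directly. Right cells have constant left descent set $\mathcal L(w)=\{s_i : w^{-1}(i)>w^{-1}(i+1)\}$. This is the descent set of $P(w)$, and it is not constant on $Q$-classes: $s_1$ and $s_2s_1$ share $Q$ but not $\mathcal L$. So right cells are the $P$-classes. This agrees with your Step~2 and with the paper's use of $P(w)$ to label right cells in Section~\ref{section-A}.

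Your reconstruction makes explicit the identification $T(\mathcal V(w))=P(w)$. The paper relies on this implicitly when it reads off $V(L_w)$ from insertion tableaux in type $A$, and your argument isolates the irreducibility of $\mathcal O_T$ as the only deep geometric step. The bare citation buys brevity and avoids re-proving that irreducibility.
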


In general, we don't have $\mathcal{V}(x)=\mathcal{V}(y)$ if $x\sim_{R} y$ for other types.

We have the following  propositions.

\begin{prop}[\cite{Jo84}]
	Let $L(\lambda)$ be a highest weight module of a simple Lie algebra $\mathfrak{g}$ with highest weight $\lambda-\rho$. Then its associated variety $V(L(\lambda))$ equals the union of some orbital varieties associated with the nilpotent coadjoint orbit $\mathcal{O}_{\Ann(L(\lambda))}$ in $\mathfrak{g}^*$.
	
\end{prop}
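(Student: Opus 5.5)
The statement to be proved is Joseph's result that $V(L(\lambda))$ is a union of orbital varieties for the orbit $\mathcal{O}_{\Ann(L(\lambda))}$. I would prove it by combining three ingredients: the inclusion $V(L(\lambda))\subseteq \mathfrak{n}$ coming from the highest weight structure, the inclusion $V(L(\lambda))\subseteq V(\Ann L(\lambda))$, and equidimensionality of $V(L(\lambda))$. Throughout, $\mathfrak{g}^*$ is identified with $\mathfrak{g}$ via the Killing form, so that $\mathfrak{n}$, up to the usual sign convention, is the annihilator of $\mathfrak{b}$.

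\emph{Step 1: $V(L(\lambda))\subseteq \mathfrak{n}$.} Take the generating subspace $M_0=\mathbb{C}v_\lambda$ spanned by a highest weight vector. Then $M_n=U_n(\mathfrak{n}^-)v_\lambda$. Hence $\mathfrak{n}$ and $\mathfrak{h}$ act on $\mathrm{gr} M$ by zero in positive degree shifts: their symbols $x\in \mathfrak{b}\subset S(\mathfrak{g})$ kill $\mathrm{gr} M$, since $x M_n\subseteq M_n$. So $\mathfrak{b}\subseteq \operatorname{Ann}_{S(\mathfrak{g})}(\mathrm{gr} M)$, and $V(M)$ lies in $\mathfrak{b}^\perp\cong\mathfrak{n}$. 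Moreover $V(M)$ is $B$-stable, because $\mathrm{gr}M$ is a $B$-equivariant $S(\mathfrak{g})$-module, the filtration being $\mathfrak{b}$-stable.

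\emph{Step 2: containment in the orbit closure.} Since $\mathrm{gr}(\Ann M)\subseteq \operatorname{Ann}(\mathrm{gr} M)$, we get $V(M)\subseteq V(\Ann M)=\overline{\mathcal{O}}$ with $\mathcal{O}=\mathcal{O}_{\Ann(L(\lambda))}$, by the preceding proposition of \cite{Jo85}. Combining with Step 1, $V(M)\subseteq \overline{\mathcal{O}}\cap\mathfrak{n}$, a variety whose irreducible components, the orbital varieties, all have dimension $\tfrac12\dim\mathcal{O}$ (Spaltenstein, Steinberg).

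\emph{Step 3: dimension count.} Every irreducible component of $V(M)$ must have dimension exactly $\tfrac12\dim \mathcal{O}$. Two facts are needed here.
\begin{enumerate}
\item Gabber's equidimensionality theorem: all components of $V(M)$ for a simple $M$ have the same dimension $d(M)$, the GK dimension. It also gives the involutivity bound $d(M)\geq\tfrac12\dim V(\Ann M)$.
\item The reverse inequality $d(M)\leq \tfrac12\dim\mathcal{O}$, which follows from Step 2, since every component of $\overline{\mathcal{O}}\cap\mathfrak{n}$ has that dimension.
\end{enumerate}
Hence each irreducible component of $V(M)$ is a closed irreducible subvariety of $\overline{\mathcal{O}}\cap\mathfrak{n}$ of the maximal dimension $\tfrac12\dim\mathcal{O}$. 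It is therefore one of the orbital varieties $\mathcal{V}(w)$ of $\mathcal{O}$, rather than an orbital variety of a smaller orbit.

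The main obstacle is Step 3, specifically the lower bound $d(M)\ge \tfrac12\dim\overline{\mathcal{O}}$ together with equidimensionality. I would handle it by invoking Gabber's integrability theorem: the radical of $\operatorname{Ann}(\mathrm{gr}M)$ is involutive, so each component of $V(M)$ is coisotropic and meets $\mathcal{O}$ in a Lagrangian-type subvariety. This bound is also Joseph's original route via Goldie rank and GK dimension comparisons. With it in hand, the remaining steps are formal.
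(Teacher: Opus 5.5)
The paper gives no proof of this statement; it quotes Joseph \cite{Jo84}. Your outline is the standard route behind that citation: $V(L(\lambda))\subseteq\mathfrak n$ and $B$-stable, $V(L(\lambda))\subseteq\overline{\mathcal O}$ with $\mathcal O=\mathcal O_{\Ann(L(\lambda))}$, equidimensionality, and Spaltenstein's formula $\dim(\mathcal O'\cap\mathfrak n)=\tfrac12\dim\mathcal O'$. Steps 1 and 2 are correct, as is the upper bound, which only needs $\dim(\overline{\mathcal O}\cap\mathfrak n)=\tfrac12\dim\mathcal O$, obtained orbit by orbit. Equidimensionality of $V(L(\lambda))$ is indeed Gabber's theorem. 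The gap is the lower bound $d(L(\lambda))\ge\tfrac12\dim\mathcal O$ in Step 3.

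That bound is not part of Gabber's equidimensionality theorem, and your derivation from the integrability theorem is circular. Involutivity of a component $Y$ of $V(M)$ only gives $\dim Y\ge\tfrac12\dim\mathcal O_Y$, where $\mathcal O_Y$ is the $G$-orbit of a generic point of $Y$. Nothing in involutivity forces $Y$, or any component, to meet $\mathcal O$; you have built exactly that into the phrase ``meets $\mathcal O$ in a Lagrangian-type subvariety''. The inclusion $\mathrm{gr}(\Ann M)\subseteq\Ann(\mathrm{gr}M)$ also gives only $\overline{G\cdot V(M)}\subseteq V(\Ann M)$, never the reverse.

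What your argument actually proves is weaker. Combining involutivity with the Step 2 bound applied to $\overline{\mathcal O_Y}$, each component $Y$ is the closure of an orbital variety of some orbit $\mathcal O_Y\subseteq\overline{\mathcal O}$, and all the $\mathcal O_Y$ have the same dimension. To conclude $\mathcal O_Y=\mathcal O$ you need an additional theorem: Joseph's equality $d(U(\mathfrak g)/\Ann L)=2\,d(L)$ for simple highest weight modules $L$. Given your Steps 1 and 2, this amounts to $\overline{G\cdot V(L(\lambda))}=V(\Ann L(\lambda))$. This is a genuine input and must be cited, not derived from involutivity.

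Once you cite it, the argument closes. Irreducibility of $\overline{\mathcal O}$ forces some $\mathcal O_Y=\mathcal O$, and equidimensionality forces all of them. Alternatively, skip involutivity: $d(L(\lambda))=\tfrac12\dim\mathcal O$ plus equidimensionality makes every component top-dimensional in $\overline{\mathcal O}\cap\mathfrak n$. Your final dimension count then identifies each component with an orbital variety of $\mathcal O$.
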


The associated variety $V(L(\lambda))$ is  {irreducible}  {if and only if it coincides with a single orbital variety}.

\begin{prop}[{\cite[Cor. 4.3.2]{BoB3}}]\label{smoothequal}
	If the Schubert variety $X(w)=\overline{BwB/B}$ is smooth, we will have
$ V(L_w)=\mathcal{V}(w)$.	

\end{prop}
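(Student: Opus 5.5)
The plan is to deduce the statement from Beilinson--Bernstein localization together with the Borho--Brylinski description of associated varieties of highest weight modules through characteristic varieties. Let $X=G/B$ and let $\mu:T^*X\to\mathfrak{g}^*$ be the moment map. The module $L_w$ has regular integral infinitesimal character, since its highest weight is $-w\rho-\rho$. So its localization $\mathcal{L}_w$ is a regular holonomic $B$-equivariant $\mathcal{D}_X$-module. By Riemann--Hilbert it corresponds to the intersection cohomology complex of a Schubert variety. The first step is to pin down the normalization so that this Schubert variety is $X(w)=\overline{C(w)}$, i.e. $\mathcal{L}_w$ is the minimal extension of $\mathcal{O}_{C(w)}$. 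This requires matching the paper's convention $L_w=L(-w\rho)$ with the Beilinson--Bernstein dictionary; if the naive match gives $X(w^{-1})$ instead, Proposition \ref{inverse} shows that smoothness is unaffected.

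Next I use smoothness. If $X(w)$ is smooth, the IC complex is the shifted constant sheaf on $X(w)$, so $\mathcal{L}_w$ is the $\mathcal{D}$-module pushforward of $\mathcal{O}_{X(w)}$ along the closed embedding. For a smooth closed subvariety the characteristic variety is exactly its conormal bundle, which is irreducible:
$$\mathrm{Ch}(\mathcal{L}_w)=T^*_{X(w)}X=\overline{T^*_{C(w)}X}.$$
In the nonsmooth case other conormal bundles $T^*_{C(v)}X$ with $v<w$ may occur; this is what usually makes $V(L_w)$ reducible.

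Then I pass from the characteristic variety to the associated variety. Borho--Brylinski show that for a $B$-equivariant module, $V(L_w)$ is recovered from $\mathrm{Ch}(\mathcal{L}_w)$ as follows. Intersect $\mathrm{Ch}(\mathcal{L}_w)$ with the fibres over the $B$-fixed points, translate each fibre back to the base point by $G$, and take the $B$-saturation inside $\mathfrak{n}\cong(\mathfrak{g}/\mathfrak{b})^*$. Equivalently, use the Steinberg-variety description $\mu^{-1}(\mathfrak{n})$ and read off the irreducible components.

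For the conormal bundle to the cell $C(w)=BwB/B$, the fibre at $wB$, identified by means of $w$ with a subspace of $\mathfrak{n}$, is $\mathfrak{n}\cap w(\mathfrak{n})$, up to the choice of $w$ versus $w^{-1}$ already fixed above. Its $B$-saturation closure is $\mathcal{V}(w)=\overline{B(\mathfrak{n}\cap w(\mathfrak{n}))}$. Since the characteristic variety has exactly one component, $V(L_w)$ is this single orbital variety.

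The main obstacle is this passage from characteristic variety to associated variety: setting up the correspondence between components of $\mathrm{Ch}$ and orbital varieties precisely. This means checking that the fibre at the $T$-fixed point $wB$ is $\mathfrak{n}\cap w\mathfrak{n}$ rather than $\mathfrak{n}\cap w^{-1}\mathfrak{n}$ or its complement, and that the map from components of $\mathrm{Ch}$ to components of $V(L_w)$ is the bijection of Joseph/Borho--Brylinski. I would verify the normalization on two extreme cases: $w=e$, where $L_e$ is finite dimensional, $X(e)$ is a point and $\mathcal{V}(e)=\overline{B(\mathfrak{n}\cap\mathfrak{n})}=\mathfrak{n}$ should be reversed, and $w=w_0$. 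These consistency checks fix all conventions before I run the general argument.
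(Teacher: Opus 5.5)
The paper gives no argument for this statement; it quotes \cite{BoB3}. Your outline is essentially the Borho--Brylinski argument: localization at the trivial infinitesimal character, then $\mathrm{Ch}(i_+\mathcal{O}_{X(w)})=T^*_{X(w)}X=\overline{T^*_{C(w)}X}$ for smooth $X(w)$, then the moment map. Those steps are fine.

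The gap is the step you flag yourself. As written, nothing determines whether the single orbital variety you obtain is $\mathcal{V}(w)$ or $\mathcal{V}(w^{-1})$. This is not cosmetic: Proposition \ref{inverse} transports only the hypothesis, not the conclusion. For example, in $A_2$ you have $\mathcal{V}(s_1s_2)=\overline{B\mathfrak{g}_{\alpha_2}}=\mathfrak{u}_{\{\alpha_1\}}$, whereas $\mathcal{V}(s_2s_1)=\mathfrak{u}_{\{\alpha_2\}}$. Since $-s_1s_2\rho=\alpha_1$ gives $(-s_1s_2\rho,\alpha_1^\vee)=2$ and $(-s_1s_2\rho,\alpha_2^\vee)=-1$, Proposition \ref{Same} yields $V(L_{s_1s_2})\subset\mathfrak{u}_{\{\alpha_1\}}$ but $V(L_{s_1s_2})\not\subset\mathfrak{u}_{\{\alpha_2\}}$. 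So the two candidate answers genuinely differ.

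Your proposed calibration cannot close this, for three reasons.

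\emph{The test cases are involutions.} Both $e$ and $w_0$ satisfy $w=w^{-1}$, so no check at them can separate $w$ from $w^{-1}$.

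\emph{The check at $e$ misidentifies $L_e$.} $L_e=L(-\rho)$ has highest weight $-2\rho$, so it is the simple antidominant Verma module, of GK dimension $\dim\mathfrak{n}$, not a finite-dimensional module. It localizes to the $\delta$-module at $eB$, with $\mu(T^*_{eB}X)=\mathfrak{n}=\mathcal{V}(e)$. The finite-dimensional module is $L_{w_0}$, which matches $\mathcal{O}_X$ and $\mathcal{V}(w_0)=0$. So nothing needs to be ``reversed'', and reversing would corrupt the dictionary.

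\emph{The recipe ``translate each fibre back to the base point'' is wrong.} The input from \cite{BoB3} is $V(\Gamma(X,\mathcal{M}))=\mu(\mathrm{Ch}\,\mathcal{M})$. On $T^*_{gB}X\cong(\mathrm{Ad}(g)\mathfrak{b})^\perp\cong\mathrm{Ad}(g)\mathfrak{n}$ the map $\mu$ is just the inclusion into $\mathfrak{g}^*\cong\mathfrak{g}$; there is no translation. Hence:
\begin{itemize}
\item the conormal fibre of $C(w)$ at $\dot wB$ is $w\mathfrak{n}\cap\mathfrak{n}$;
\item so $\mu(T^*_{C(w)}X)=B(\mathfrak{n}\cap w\mathfrak{n})$;
\item by properness of $\mu$, $\mu(\overline{T^*_{C(w)}X})=\mathcal{V}(w)$.
\end{itemize}
Translating by $\dot w^{-1}$ would instead give $\mathfrak{n}\cap w^{-1}\mathfrak{n}$ and hence $\mathcal{V}(w^{-1})$.

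To finish, you must also fix the dictionary $L_w\leftrightarrow$ the minimal extension of $\mathcal{O}_{C(w)}$, using something that distinguishes $w$ from $w^{-1}$. Left descents work: $L_y$ is locally $\mathfrak{p}_{\{\alpha\}}$-finite iff $(-y\rho,\alpha^\vee)>0$ iff $s_\alpha y<y$. That is exactly when $X(y)$, not $X(y^{-1})$, is $P_{\{\alpha\}}$-stable. With this dictionary and the untranslated moment map in place, your argument goes through.
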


Recall that any subset ${\rm J}\subseteq\Pi$ generates a subsystem $\Phi_{\rm J}\subseteq\Phi$, and the corresponding parabolic subalgebra is $\mathfrak{p}_{\rm J}=\mathfrak{l}_{\rm J}\oplus\mathfrak{u}_{\rm J}$.

We say $\lambda\in\frh^*$ is \emph{$\Phi_{\rm J}^+$-dominant} if and only if $(\lambda,\alpha^\vee)\in\mathbb{Z}_{>0}$ for all $\alpha\in {\rm J}$. 

\begin{prop}[{\cite[Prop. 4.1]{BMXX}}]\label{Same}
    Let $\frg$ be a simple complex Lie algebra and $\lambda\in\frh^*$. Then $\lambda$ is $\Phi_{\rm J}^+$-dominant if and only if $V(L(\lambda))\subset\fru_{\rm J}$.
\end{prop}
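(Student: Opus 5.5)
The statement to prove is Proposition \ref{Same}: $\lambda$ is $\Phi_{\rm J}^+$-dominant if and only if $V(L(\lambda))\subset\fru_{\rm J}$. Here $\fru_{\rm J}$ is identified with a subspace of $\mathfrak{n}\cong(\mathfrak{n}^-)^*$ via the Killing form. I plan to prove the two directions separately. Both rest on the fact that $V(L(\lambda))$ is a $B$-stable closed cone in $\mathfrak{n}$, and that it is the zero set of $\operatorname{gr}$ of the annihilator of a generating vector.

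For the forward direction, suppose $(\lambda,\alpha^\vee)\in\mathbb{Z}_{>0}$ for all $\alpha\in{\rm J}$. Then $\lambda-\rho$ is dominant integral for $\fl_{\rm J}$. Hence $L(\lambda)$ is a quotient of the generalized Verma module $N_{\rm J}(\lambda)$, since the highest weight vector $v$ of $L(\lambda)$ generates a finite-dimensional $\fl_{\rm J}$-module on which $\fu_{\rm J}$ acts trivially. I take $M_0=U(\fp_{\rm J})v=F(\lambda)$, which is finite dimensional. The PBW theorem gives $\operatorname{gr}N_{\rm J}(\lambda)\cong S(\fu_{\rm J}^-)\otimes F(\lambda)$ as a module over $S(\mathfrak{g})$. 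In this module every element of $\fp_{\rm J}$ acts nilpotently, because it has degree $0$ contributions and the graded action kills degree. So $\fp_{\rm J}\subset\sqrt{\Ann\operatorname{gr}}$. Therefore $V(N_{\rm J}(\lambda))\subset\fp_{\rm J}^{\perp}$, and under the Killing form $\fp_{\rm J}^\perp$ is identified with $\fu_{\rm J}$. Since associated varieties decrease under quotients, $V(L(\lambda))\subset\fu_{\rm J}$.

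For the converse, suppose $\alpha\in{\rm J}$ with $(\lambda,\alpha^\vee)\notin\mathbb{Z}_{>0}$. I claim $f_{-\alpha}$ acts injectively on the whole $\alpha$-string through $v$, and in particular $f_{-\alpha}^k v\neq0$ for all $k$. This follows from $\mathfrak{sl}_2$-theory: $e_\alpha f_{-\alpha}^k v = k((\lambda-\rho,\alpha^\vee)-k+1)f_{-\alpha}^{k-1}v$. If $f_{-\alpha}^k v=0$ for a minimal such $k$, then $f_{-\alpha}^{k}v$ is killed by all of $\mathfrak{n}$, since the only simple root vector that matters is $e_\alpha$ and the others commute past. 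It would then be a singular vector in a simple module, forcing $(\lambda,\alpha^\vee)=k$, a contradiction. Hence $f_{-\alpha}$ is not locally nilpotent on $L(\lambda)$.

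The key step is to convert this into $e_\alpha\in V(L(\lambda))$. The plan is as follows. $V(L(\lambda))$ is $B$-stable, and it is contained in the variety cut out by the $f_\beta$ for those $\beta$ whose root vectors act locally nilpotently (Gabber/Joseph's standard lemma). Conversely, the standard lemma also says that if $x\in\mathfrak{g}$ acts locally nilpotently then $V\subset\{x=0\}$, and a homogeneous $V$ lies in the hyperplane $\{f_{-\alpha}=0\}$ iff $f_{-\alpha}$ vanishes on it. So I need to show that if $V(L(\lambda))\subset\fu_{\rm J}$ then $f_{-\alpha}$ acts locally nilpotently.

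I expect this to be the main obstacle. I would argue that $V(L(\lambda))\subset\fu_{\rm J}$ implies $f_{-\alpha}\in\sqrt{\operatorname{gr}\Ann_{U}(v)}$, using that $V$ is defined by $\operatorname{gr}$ of the annihilator of the cyclic vector $v$ under the good filtration $U_n v$. Some power $f_{-\alpha}^N$ then lies in $\operatorname{gr}\Ann(v)$, so there is an element $f_{-\alpha}^N+(\text{lower terms})$ killing $v$. Projecting onto the weight space $\lambda-\rho-N\alpha$ and using that $v$ is a weight vector, the lower-degree terms have the wrong weight unless they are zero-weight corrections. I would handle these via the $\mathfrak{h}$-grading to conclude that $f_{-\alpha}^N v$ lies in $U_{N-1}v$ with weight $\lambda-\rho-N\alpha$. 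Only $\mathbb{C}f_{-\alpha}^N$-type monomials of degree below $N$ carry that weight, which leads to a contradiction with injectivity.

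If this weight argument becomes messy, the fallback is to cite Joseph's result that the $\mathfrak{s}_\alpha$-finiteness of $L(\lambda)$ is detected by the associated variety, via $\tau$-invariants. Then $\alpha\in\tau(L(\lambda))$ exactly when $V(L(\lambda))\subset\{f_{-\alpha}=0\}$. Combined with $B$-stability, this gives the result, since a $B$-stable subset of $\mathfrak{n}$ killed by all $f_{-\alpha}$ with $\alpha\in{\rm J}$ lies in $\fu_{\rm J}$.
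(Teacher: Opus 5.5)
Your proof is correct. The paper gives no proof of its own here; it imports the statement from \cite[Prop.~4.1]{BMXX}. Your argument is the standard one. For the forward direction, each $x\in\fp_{\rm J}$ preserves every $M_n=U_n(\frg)F(\lambda)$, so its symbol kills $\operatorname{gr}M$, giving $V(L(\lambda))\subset V(N_{\rm J}(\lambda))\subset\fp_{\rm J}^{\perp}=\fu_{\rm J}$. For the converse, you use the Nullstellensatz plus a weight count. Three points should be cleaned up, none of which is a real gap.

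(1) Your justification that $f_{-\alpha}^kv\neq0$ is muddled: ``if $f_{-\alpha}^kv=0$ then $f_{-\alpha}^kv$ is killed by $\mathfrak n$'' says nothing. Instead, take $k\ge1$ minimal with $f_{-\alpha}^kv=0$ and apply $e_\alpha$. This gives $0=k\bigl((\lambda,\alpha^\vee)-k\bigr)f_{-\alpha}^{k-1}v$ with $f_{-\alpha}^{k-1}v\neq0$, which forces $(\lambda,\alpha^\vee)=k\in\mathbb{Z}_{>0}$.

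(2) The step you call the main obstacle is immediate. From $f_{-\alpha}^N\in\operatorname{gr}\Ann_{U(\frg)}(v)$ you get $f_{-\alpha}^Nv\in U_{N-1}(\frg)v$. By PBW in the order $\mathfrak n^-,\frh,\mathfrak n$, one has $U_{N-1}(\frg)v=U_{N-1}(\mathfrak n^-)v$. Since $\alpha$ is simple and $2\alpha\notin\Phi$, $N\alpha$ is not a sum of fewer than $N$ positive roots. Hence the $(\lambda-\rho-N\alpha)$-weight space of $U_{N-1}(\mathfrak n^-)v$ is zero, and $f_{-\alpha}^Nv=0$. You need no projection of elements of the annihilator and no ``zero-weight corrections''. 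Also, your phrase ``only $\mathbb{C}f_{-\alpha}^N$-type monomials of degree below $N$'' should read ``no monomial of degree below $N$''.

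(3) Drop the fallback: besides being unnecessary, its closing claim is false. A $B$-stable subset of $\mathfrak n$ on which the coordinates $f_{-\alpha}$, $\alpha\in{\rm J}$, vanish need not lie in $\fu_{\rm J}$. For example, in $\mathfrak{sl}_3$ with ${\rm J}=\Pi$, the line $\mathbb{C}e_{\alpha_1+\alpha_2}$ is $B$-stable while $\fu_{\rm J}=0$. The correct tool there would be $P_{\rm J}$-stability, which gives $V\subset\bigcap_{p\in P_{\rm J}}\mathrm{Ad}(p)\mathfrak n=\fu_{\rm J}$. Your generalized Verma argument already handles that direction anyway.
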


\subsection{Kazhdan--Lusztig cells}\label{KL-cells}
Now we  recall the Kazhdan--Lusztig  cells in the following. Some more details can be found in \cite{Lus03}.

Recall that the Weyl group $ W  $ is a Coxeter group generated by $ S=\{s_\al\mid\al\in\Pi \} $. Given an indeterminate $q$, the Hecke algebra $ \mc{H} $ over $ \mathcal{A} :=\mathbb{Z}[q,q^{-1}]$ is generated by $ T_w $, $ w\in W $ with relations \[
T_wT_{w'}=T_{ww'} \text{ if }\ell(ww')=\ell(w)+\ell(w'),
\]
\[
\text{and }(T_s+q^{-1})(T_s-q)=0 \text{ for any }s\in S.
\]
The unique elements $ C_w $ such that
\[
\overline{C_w}=C_w,\qquad C_w\equiv T_w \mod{\mc{H}_{<0}}
\]
are known as the \textit{Kazhdan--Lusztig} (KL) \textit{basis} of $ \mc{H} $, where $ \bar{\,} :\mc{H}\rar\mc{H}$ is the bar involution such that $ \bar{q}=q^{-1} $, $ \overline{T_w} =T_{w^{-1}}^{-1}$, and $ \mc{H}_{<0}=\bigoplus_{w\in W}\mathcal{A}_{<0}T_w $ with $ \mathcal{A}_{<0}=q^{-1}\mathbb{Z}[q^{-1}] $.

If $ C_y $ occurs in the expansion of $ hC_w $ (resp. $C_wh$) with respect to the KL basis for some $ h\in\mc{H} $, then we write $ y\leftarrow_L w $ (resp. $ y\leftarrow_R w $). Extend $ \leftarrow_L $ (resp. $ \leftarrow_R $) to a preorder $ \leq_L $ (resp. $\leq _R$) on $ W $. For $x, w\in W$, write $x \leq_{LR} w$ if there exists $x=w_1, \cdots, w_n=w$ such that for every $1\leq i<n$ we have either $w_i\leq_L w_{i+1}$ or $w_i\leq_R w_{i+1}$. Let $\sim_{L}$, $\sim_{R}$, $\sim_{LR}$ be the equivalence relations associated with $\leq_L$, $\leq_R$, $\leq_{LR}$ (for example, $x\sim_{L}w$ if and only if $x\leq_L w$ and $w\leq_Lx$). The equivalence classes on $W$ for $\sim_L$, $\sim_R$, $\sim_{LR}$ are called \textit{left cells}, \textit{right cells} and \textit{two-sided cells} respectively. 

\begin{lem}[{\cite[Lem. 6.6]{Jo84}}; {\cite[Cor. 6.3]{BoB3}}]\label{constant}
$V(L_w)$ is constant on each KL right cell.	
\end{lem}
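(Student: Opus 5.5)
The plan is to show that the associated variety is monotone along the preorder defining right cells: if $y\leftarrow_R w$ then $V(L_y)\subseteq V(L_w)$. Once this holds, $y\sim_R w$ gives inclusions in both directions and hence $V(L_y)=V(L_w)$. The argument combines two ingredients. The first is a categorification of right multiplication in $\mc{H}$ by exact functors on the principal block $\mathcal{O}_0$. The second is the fact that these functors cannot enlarge associated varieties.

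\emph{Step 1 (functors do not enlarge $V$).} For a finite-dimensional $\frg$-module $E$ and a finitely generated $M$, I will check that $V(M\otimes E)=V(M)$. Take the generating subspace $M_0\otimes E$ of $M\otimes E$. Then $\operatorname{gr}(M\otimes E)$ is a subquotient of $\operatorname{gr}M\otimes E$, where $\frg$ acts on $E$ with degree zero. It is therefore annihilated by the same ideal up to radical, which gives $V(M\otimes E)\subseteq V(M)$. The reverse inclusion follows because $M$ is a direct summand of $M\otimes E\otimes E^*$. Moreover, if $N$ is a subquotient of $M$, then $\operatorname{gr}N$ is a subquotient of $\operatorname{gr}M$ for compatible (induced) filtrations, so $V(N)\subseteq V(M)$. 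Projection onto a generalized infinitesimal character block is a direct summand operation. Combining these, for every projective functor $\theta$ (in particular the wall-crossing functor $\theta_s$) and every composition factor $L$ of $\theta L_w$, we get $V(L)\subseteq V(L_w)$.

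\emph{Step 2 (matching with the Hecke algebra).} I will identify the Grothendieck group of the block of $L_w$ (antidominant-regular integral infinitesimal character $-\rho$) with $\mc{H}|_{q=1}$. Under this identification the classes $[L_w]$ correspond to KL basis elements (up to the standard sign and $w\mapsto ww_0$ twists), and $[\theta_s M]$ corresponds to right multiplication by $C_s$. This is the content of the Kazhdan--Lusztig conjecture (Beilinson--Bernstein, Brylinski--Kashiwara). With it, $C_wC_s$ expands with nonnegative coefficients into terms $C_y$ with $y\leftarrow_R w$ for the generator $h=C_s$. Positivity rules out cancellation, so each such $L_y$ really occurs as a composition factor of $\theta_s L_w$. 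Since the $C_s$ generate $\mc{H}$ as an algebra, the relation $y\leftarrow_R w$ is generated by these one-step relations. Step 1 then gives $V(L_y)\subseteq V(L_w)$, and transitivity extends this to $y\leq_R w$.

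\emph{Main obstacle.} The delicate point is bookkeeping of conventions. The highest weight $-w\rho-\rho$, the choice of left versus right action of $\theta_s$, and the possible twist by $w_0$ or by inversion must all be matched so that it is genuinely the \emph{right} preorder, not the left, that is realized by translation functors. My first check is the rank-one case: I will verify that $\theta_s L_e$ and $\theta_s L_s$ have the predicted composition factors. I will then compare this with the known statement that $\operatorname{Ann}(L_w)$ is constant on left cells, which is Joseph's dual result obtained through $L_w\mapsto$ primitive ideal and $w\mapsto w^{-1}$. This comparison fixes the orientation. If the identification produces the left preorder instead, I will instead use the Joseph/Vogan description of $V(L_w)$ via $\tau$-invariants and $T_{\alpha\beta}$-operators on the right, which generate $\sim_R$ in the classical types. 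That route is where I expect most of the work to lie.
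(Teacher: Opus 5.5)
Your argument is correct and is essentially the standard proof behind the results the paper cites (the paper gives no argument of its own, only the references to Joseph and Borho--Brylinski): projective functors and passage to subquotients cannot enlarge $V$, and by the Kazhdan--Lusztig conjecture $\theta_s L_w$ is nonzero exactly when $ws>w$, in which case its composition factors are $L_w$, $L_{ws}$ and the $L_y$ with $y<w$, $ys<y$, $\mu(y,w)\neq 0$, i.e.\ precisely the one-step relations generating $\leq_R$. The convention check you flag comes out in your favour with no twist: the paper's $L_w$ is exactly the module of the original Kazhdan--Lusztig conjecture, and $\theta_s M_w$ has a Verma flag with subquotients $M_w$ and $M_{ws}$, so translation acts by right multiplication and yields the right preorder; the fallback via $\tau$-invariants and $T_{\alpha\beta}$-operators is therefore unnecessary (and would not cover the exceptional types anyway).
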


\begin{lem}[{\cite[Thm. 2.1]{Mel93}}]\label{gamma-w}
    For each $w\in W$, there exists a subset $\Gamma(w)\subset W$ such that
    $$V(L_w)=\bigcup\limits_{y\in \Gamma(w)}\mathcal{V}(y)$$
    where $\Gamma(w)$ has the following properties:
    \begin{enumerate}
        \item $w\in \Gamma(w)$.
        \item $\Gamma(w)\subset \mathcal{C}_{LR}(w)$.
        \item If $y\in \Gamma(w)$, then $y\leq w$.
    \end{enumerate}
\end{lem}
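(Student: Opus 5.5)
The plan is to realize $V(L_w)$ geometrically on the flag variety through Beilinson--Bernstein localization, following Borho--Brylinski \cite{BoB3}. Since $L_w$ has highest weight $-w\rho-\rho$, it is regular integral, and it localizes to the intermediate extension $\mathcal{M}_w$ of the constant $\mathcal{D}$-module on the Schubert cell $C(w)=BwB/B$. The support of $\mathcal{M}_w$ is the Schubert variety $X(w)$. Its characteristic variety $\mathrm{Ch}(\mathcal{M}_w)\subset T^*(G/B)$ is a union of conormal bundles
\[
\mathrm{Ch}(\mathcal{M}_w)=\bigcup_{y\in \Gamma'(w)} \overline{T^*_{C(y)}(G/B)}, \qquad C(y)\subset X(w).
\]
The condition $C(y)\subset X(w)$ means $y\leq w$ in the Bruhat order. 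The conormal bundle of the open cell $C(w)$ always occurs, because on $C(w)$ the module $\mathcal{M}_w$ is just the constant module, whose characteristic variety is the zero section there.

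Next I transport this to $\mathfrak{g}^*\cong\mathfrak{g}$. By \cite{BoB3}, $V(L_w)$ is obtained by restricting the Springer moment map $\mu\colon T^*(G/B)\to\mathfrak{g}^*$ to the fibre over the base point $eB$, which sits inside $\mathfrak{n}$. A direct computation identifies the fibre of $\overline{T^*_{C(y)}(G/B)}$ over the base point, after $B$-saturation, with $\overline{B(\mathfrak{n}\cap y\mathfrak{n})}=\mathcal{V}(y)$. This step requires fixing the convention for $y$ versus $y^{-1}$ so that it matches Steinberg's parametrization. It gives $V(L_w)=\bigcup_{y\in\Gamma'(w)}\mathcal{V}(y)$ with $w\in\Gamma'(w)$ and every $y\le w$.

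To obtain $\Gamma(w)$ I discard the redundant pieces. By \cite{Jo84}, $V(L_w)$ is equidimensional and equals a union of orbital varieties of $\mathcal{O}_w$, of dimension $\tfrac12\dim\mathcal{O}_w$. Each $\mathcal{V}(y)$ is irreducible, so every irreducible component of $V(L_w)$ equals some $\mathcal{V}(y)$ with $y\in\Gamma'(w)$. I let $\Gamma(w)$ consist of those $y\in\Gamma'(w)$ for which $\mathcal{V}(y)$ is a component, and then the union over $\Gamma(w)$ still equals $V(L_w)$. Property (3) is inherited from $\Gamma'(w)$. For (1), $\mathcal{V}(w)$ must itself be a component: I would argue this from Joseph's result that $\mathcal{V}(w)\subset V(L_w)$ has dimension equal to $\dim V(L_w)$, using that $\mathcal{V}(w)$ is an orbital variety of $\mathcal{O}_w$.

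For (2), any $y\in\Gamma(w)$ has $\mathcal{V}(y)$ an orbital variety of $\mathcal{O}_w$. The orbital variety $\mathcal{V}(y)$ is attached to the orbit $\mathcal{O}_y$, which is the orbit of $V(I_y)$. Joseph's theory of Goldie rank polynomials, or equivalently the Barbasch--Vogan correspondence, shows that this orbit depends only on the two-sided cell and separates distinct cells. Hence $\mathcal{O}_y=\mathcal{O}_w$ forces $y\sim_{LR}w$.

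I expect the main obstacle to be this last identification, namely that equality of the attached nilpotent orbits implies two-sided cell equivalence. This is immediate for special orbits through Lusztig's bijection, and the orbits $\mathcal{O}_w$ are special. The second delicate point is the precise convention in the moment-map computation, $\mathcal{V}(y)$ versus $\mathcal{V}(y^{-1})$, which must be checked against the $-w\rho-\rho$ normalization.
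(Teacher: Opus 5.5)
The paper gives no proof here, only the citation to Melnikov. Your framework is the natural one: Borho--Brylinski's description of $V(L_w)$ as the union of the $\mathcal{V}(y)$ over the conormal bundles $\overline{T^*_{C(y)}(G/B)}$ that occur in the characteristic variety. It does give (3) and the inclusion $\mathcal{V}(w)\subseteq V(L_w)$.

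\textbf{The gap.} Afterwards you use two identifications:
\begin{enumerate}
\item[(i)] $\mathcal{V}(w)$ is an orbital variety of $\mathcal{O}_w$;
\item[(ii)] the orbit in which $\mathcal{V}(y)$ is dense is $\mathcal{O}_y$.
\end{enumerate}
Both are type $A$ phenomena: there every orbit is special and the Robinson--Schensted shape of $y$ governs both sides. Both fail in general, and there is no result of Joseph giving $\dim\mathcal{V}(w)=\dim V(L_w)$. In general you only have $\mathcal{V}(y)\subseteq V(L_y)\subseteq\overline{\mathcal{O}}_y$. The orbit of $\mathcal{V}(y)$ can be strictly smaller than $\mathcal{O}_y$, and even non-special.

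\textbf{A counterexample in type $C_2$.} Take $\alpha_1$ short, $\alpha_2$ long, and $w=s_1s_2s_1=w_0s_2$.
\begin{itemize}
\item Since $\mathfrak{n}\cap w\mathfrak{n}=\mathfrak{g}_{\alpha_2}$, we get $\mathcal{V}(w)=\overline{Be_{\alpha_2}}$. This is the $2$-dimensional orbital variety of the non-special minimal orbit.
\item On the other hand, $-w\rho=\rho-\alpha_2$ is $\Phi^+_{\{\alpha_1\}}$-dominant, so Proposition~\ref{Same} gives $V(L_w)\subseteq\mathfrak{u}_{\{\alpha_1\}}$.
\item Since $\dim V(L_w)=\tfrac12\dim\mathcal{O}_{\rm ms}=3$, it follows that $V(L_w)=\mathfrak{u}_{\{\alpha_1\}}=\mathcal{V}(s_1)$.
\end{itemize}
So $\mathcal{V}(w)\subsetneq V(L_w)$ is not a component, and your pruned $\Gamma(w)$ does not contain $w$.

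\textbf{Repairing (1).} This is easy: do not discard $w$. Because $\mathcal{V}(w)\subseteq V(L_w)$ already, adjoining $w$ to the set of components leaves the union unchanged, and (2) and (3) hold trivially for $w$.

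\textbf{Property (2) is the real missing piece.} For $y\neq w$ in your pruned set, $\mathcal{V}(y)$ is an orbital variety of $\mathcal{O}_w$. The only general consequence is $\mathcal{O}_w\subseteq\overline{\mathcal{O}}_y$, coming from $\mathcal{V}(y)\subseteq V(L_y)$. So a priori $\mathcal{O}_y$ could be strictly larger than $\mathcal{O}_w$. The bijection between special orbits and two-sided cells only yields $y\sim_{LR}w$ once you know $\mathcal{O}_y=\mathcal{O}_w$.

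Closing this needs an additional input tying the orbit of $\mathcal{V}(y)$ to the two-sided cell of $y$. For instance, it would suffice to know that this orbit always lies in the special piece of $\mathcal{O}_y$: each special piece contains exactly one special orbit, so that would force $\mathcal{O}_y=\mathcal{O}_w$. Your proposal does not supply such a statement. In type $A$, where (i) and (ii) hold, your argument goes through as written.
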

\begin{lem}[{\cite[Prop. 4.1]{Mel93}}]\label{min-length-w}
In each two-sided cell $\mathcal{C}_{LR}$, there exists at least one right cell $\mathcal{C}_R$ such that $V(L_w)$ is irreducible for all $w \in \mathcal{C}_R$. Moreover, let $w \in \mathcal{C}_{LR}$ be an element of minimal length in $\mathcal{C}_{LR}$, i.e., $\ell(w) \le \ell(y)$ for every $y \in \mathcal{C}_{LR}$. Then
\[
V(L_y) = \mathcal{V}(w) \quad \text{for all } y \in \mathcal{C}_R(w).
\]
\end{lem}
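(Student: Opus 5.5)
The plan is to deduce the lemma directly from Lemma \ref{gamma-w} and Lemma \ref{constant}, using only the interaction between Bruhat order and length. Fix a two-sided cell $\mathcal{C}_{LR}$. Since $\mathcal{C}_{LR}$ is a finite nonempty subset of $W$, it contains an element $w$ with $\ell(w)\le \ell(y)$ for all $y\in\mathcal{C}_{LR}$. So the first assertion, the existence of a right cell on which $V(L_\cdot)$ is irreducible, will follow once the second assertion is proved for this $w$, taking $\mathcal{C}_R=\mathcal{C}_R(w)$.

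The first step is to show $\Gamma(w)=\{w\}$, with $\Gamma(w)$ the subset from Lemma \ref{gamma-w}. Let $y\in\Gamma(w)$.
\begin{itemize}
\item By property (2) of Lemma \ref{gamma-w}, $y\in\mathcal{C}_{LR}(w)=\mathcal{C}_{LR}$, so minimality gives $\ell(y)\ge\ell(w)$.
\item By property (3), $y\le w$ in the Bruhat order.
\end{itemize}
From the definition of the Bruhat order recalled above, $y<w$ means there is a chain $y=w_1\rightarrow\cdots\rightarrow w_m=w$ in which each arrow strictly increases length. Hence $y<w$ forces $\ell(y)<\ell(w)$, a contradiction. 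Therefore $y=w$. Together with property (1), which gives $w\in\Gamma(w)$, we get $\Gamma(w)=\{w\}$. Lemma \ref{gamma-w} then yields $V(L_w)=\mathcal{V}(w)$, a single orbital variety.

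The second step transports this to the whole right cell. By Lemma \ref{constant}, $V(L_y)=V(L_w)=\mathcal{V}(w)$ for every $y\in\mathcal{C}_R(w)$. In particular $V(L_y)$ is irreducible for all $y\in\mathcal{C}_R(w)$, which proves both parts of the lemma.

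I expect no serious obstacle, since the argument is a bookkeeping combination of the cited results. The only point needing care is the strictness $y<w\Rightarrow \ell(y)<\ell(w)$, which is immediate from the definition of $\rightarrow$. The substantive input is entirely contained in Lemma \ref{gamma-w}, i.e.\ Melnikov's description of $V(L_w)$ as a union over $\Gamma(w)$ with the three stated properties. If one had to prove that lemma as well, that would be the real difficulty: it relies on Joseph's theory of characteristic polynomials and Borho--Brylinski's geometric description via the moment map on $T^*(G/B)$.
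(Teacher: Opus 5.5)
Your argument is correct. The paper gives no proof of its own and only cites \cite[Prop. 4.1]{Mel93}; your deduction is the standard way to obtain that proposition from Lemma \ref{gamma-w}: minimality of $\ell(w)$ in $\mathcal{C}_{LR}(w)$, together with $y\le w$ and $y\in\mathcal{C}_{LR}(w)$ for $y\in\Gamma(w)$, forces $\Gamma(w)=\{w\}$, and then Lemma \ref{constant} extends $V(L_w)=\mathcal{V}(w)$ to all of $\mathcal{C}_R(w)$. This is the same length-versus-Bruhat-order reasoning the paper uses when it applies the lemma, for instance for $\Gamma(w_1)=w_1$ in type $D_n$.
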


Recall that $I_w=\Ann(L_w)$ and  $V(I_w)=\overline{\mathcal{O}}_w$. It is known that the map $w\mapsto \mathcal{O}_w$  induces a bijection between the two-sided cells in the Weyl group $W$ and special nilpotent orbits, see for example \cite{BV82, BoB1, Ta}. In other words, $\mathcal{O}_w=\mathcal{O}_y$ if and only if $w\sim_{LR}y$. A nilpotent orbit of $\mathfrak{g}$ or $\mathfrak{g}^*$ is called {\it minimal} if it is minimal among non-trivial nilpotent orbits with respect to the order defined by $\mathcal{O}' \leq \mathcal{O} \Leftrightarrow \mathcal{O}' \subset \overline{\mathcal{O}}$. Similarly, 
a special nilpotent orbit $\mathcal{O}_w$ is called  {\it minimal special}  if it is minimal among non-trivial special nilpotent orbits. From Kim \cite{Kim19}, we know that the minimal and  minimal special nilpotent orbit coincide if and only if $\mathfrak{g}$ is of simply laced type. Also these orbits are unique by \cite[\S 2.8]{Kim19}. From Wang \cite{Wang}, the minimal nilpotent orbit $\mathcal{O}_{\rm m}$ has the minimal dimension $(\rho, \beta^{\vee})$ among all nilpotent orbits, where $\beta$ is the  highest root.  From Humphreys \cite[Prop. 5]{Hum16}, the minimal special nilpotent orbit $\mathcal{O}_{\rm ms}$ has the minimal dimension $(\rho, \beta_s^{\vee})$ among all special nilpotent orbits, where $\beta_s$ is the highest short root  when $\Phi$ is not simply laced and is the highest root $\beta$ otherwise.


Let 
\[
\cC := \Set{w\in W | \text{$w\neq 1$ and has a unique   reduced expression}}.
\]
Evidently, $S\subset\caC$.
Denote \[
\mathcal{C}w_0:=\{ww_0\mid w\in\mathcal{C}\}. 
\]

\begin{lem}[{\cite[Lem. 3.3]{BMXX}}]\label{hklem2}
Let $w\in W$. Then
	\begin{itemize}
 \item[(1)]  The sets $\mathcal{C}$ and $\mathcal{C}w_0$ are two-sided cells of $W$.
		\item[(2)]  If $w\neq 1, w_0$, then $x\leq_{LR} w\leq_{LR} y$ for any $x\in\mathcal{C}w_0$ and $y\in\mathcal{C}$.
            \item[(3)] The equations $\cC = w_0^{-1} \cC w_0$ and $\cC w_0 = w_0\cC$ hold.
	\end{itemize}

\end{lem}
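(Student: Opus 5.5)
The plan is to prove the three parts in the order (3), (1), (2). Parts (3) and (2) are formal consequences of standard properties of the Kazhdan--Lusztig preorders. The real content is in (1).

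For (3), I use that conjugation by $w_0$ is induced by the automorphism $\alpha\mapsto -w_0\alpha$ of $\Pi$. Hence $w_0 s_{i_1}\cdots s_{i_k} w_0 = s_{\sigma(i_1)}\cdots s_{\sigma(i_k)}$ for a diagram automorphism $\sigma$. This map sends reduced expressions of $w$ bijectively to reduced expressions of $w_0ww_0$, so $w$ has a unique reduced expression exactly when $w_0ww_0$ does. Since $w_0^{-1}=w_0$, this gives $\mathcal{C}=w_0^{-1}\mathcal{C}w_0$. Multiplying on the left by $w_0$ then gives $w_0\mathcal{C}=\mathcal{C}w_0$.

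For (1), I would invoke Lusztig's description of the subregular cell. For $W$ irreducible, the set of $w\ne 1$ with a unique reduced expression is exactly the set of elements with $a$-value $1$, and this set is a single two-sided cell. If a self-contained argument is wanted, I would proceed as follows.
\begin{itemize}
\item Show that each element of $\mathcal{C}$ is a path $s_{i_1}s_{i_2}\cdots s_{i_k}$ in the Coxeter graph with consecutive indices adjacent. In the non-simply laced bonds, the braid relation bounds how long the path may run back and forth along a bond.
\item Show that left and right multiplication by adjacent simple reflections move along such paths inside a single left or right cell, using the Kazhdan--Lusztig star operations.
\item Use connectedness of the Dynkin diagram to link all simple reflections, and hence all of $\mathcal{C}$, into one two-sided cell.
\item Show that no element outside $\mathcal{C}$ is $\sim_{LR}$ to $s$. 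If $w\notin\mathcal{C}\cup\{1\}$, then $w$ has either two distinct reduced expressions or a descent set that is too large, which forces $a(w)\ge 2$.
\end{itemize}
Once $\mathcal{C}$ is a two-sided cell, the statement for $\mathcal{C}w_0$ follows from the Kazhdan--Lusztig fact that $x\le_{LR}y$ if and only if $yw_0\le_{LR}xw_0$. So right multiplication by $w_0$ permutes two-sided cells and reverses the order.

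For (2), note that with the paper's convention ($y\leftarrow_L w$ when $C_y$ occurs in $hC_w$), every element satisfies $y\le_{LR}1$. Let $w\neq 1$ and write $w=s_{i_1}\cdots s_{i_k}$ reduced. If $sx>x$, then $C_sC_x=C_{sx}+\sum_{z<x}\mu(z,x)C_z$, so $sx\le_L x$. Starting from $x=s_{i_k}$ and building $w$ up one letter at a time gives $w\le_L s_{i_k}$. Since $s_{i_k}\in\mathcal{C}$ and $\mathcal{C}$ is a single two-sided cell, we get $w\le_{LR}y$ for every $y\in\mathcal{C}$. For the lower bound, take $w\ne w_0$, so that $ww_0\ne 1$. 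Applying the previous step to $ww_0$ gives $ww_0\le_{LR}x'$ for all $x'\in\mathcal{C}$. The order reversal then gives $x'w_0\le_{LR}w$, and $\mathcal{C}w_0$ consists of exactly these elements $x'w_0$.

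The main obstacle is (1): showing that $\mathcal{C}$ is exactly one two-sided cell and no more. I would first try to cite Lusztig's theorem characterizing $a(w)=1$. Only if that is unavailable would I carry out the star-operation argument above, checking the $B_2$ and $G_2$ bonds by hand.
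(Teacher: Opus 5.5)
Your proposal is correct. The paper gives no argument of its own for this lemma; it imports it verbatim from \cite[Lem. 3.3]{BMXX}, so there is no internal proof to match.

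Your route makes explicit that only two external inputs are needed:
\begin{itemize}
\item Lusztig's theorem that $\mathcal{C}$ is a single two-sided cell, namely the $a=1$ cell \cite{Lus83}.
\item The Kazhdan--Lusztig duality $x\le_{LR}y\iff yw_0\le_{LR}xw_0$.
\end{itemize}
Everything else is then formal. Part (3) follows from $w_0s_iw_0=s_{\sigma(i)}$. The $\mathcal{C}w_0$ half of (1) follows from the duality. Part (2) follows from the fact that $C_{sx}$ occurs with coefficient $1$ in $C_sC_x$ when $sx>x$. You omitted the condition $sz<z$ on the correction sum in that formula, but this is harmless. Your proof of (2) also shows that the upper bound needs only $w\neq 1$ and the lower bound only $w\neq w_0$.

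The one soft spot is your optional self-contained sketch of (1). Its last bullet asserts $a(w)\ge 2$ without giving a mechanism. The ``descent set too large'' alternative is redundant, since every $w\notin\mathcal{C}\cup\{1\}$ has two distinct reduced expressions by definition.

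To make the sketch precise, use Matsumoto's theorem. Two distinct reduced expressions are linked by braid moves, so some reduced expression of $w$ contains a factor $w_{st}=sts\cdots$ with $m_{st}$ letters. Thus $w=x\,w_{st}\,y$ with lengths adding. Your $C_sC_x$ argument, applied on the left and then on the right, gives $w\le_L w_{st}y\le_R w_{st}$, so $w\le_{LR}w_{st}$. By monotonicity of $a$ (with the paper's convention, $\le_{LR}$ reverses $a$), $a(w)\ge a(w_{st})=m_{st}\ge 2>1=a(s)$. Hence $w\not\sim_{LR}s$.

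If you simply cite Lusztig for (1), as you propose to do first, this detour is unnecessary.
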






From \cite[Rem. 3.6]{BMXX}, $L_w$  is called an {\it integral  minimal} module when    $w\in \mathcal{C}w_0$. In this case, we have $ V(I_w)=\overline{\mathcal{O}}_w= \overline{\mathcal{O}}_{\rm ms}$. When  $ V(\Ann(L(\lambda)))=\overline{\mathcal{O}}_{\Ann(L(\lambda))}= \overline{\mathcal{O}}_{\rm m}$, $L(\lambda)$ will be called {\it minimal}.  

\begin{lem}[{\cite[Thm. 4.2]{BMXX}}]\label{minimal-integralequal}
 If $L_w$ is a minimal highest weight module, then
\[
V(L_w) = \bigcup_{\alpha \in \Pi \setminus I_\lambda} \overline{Be_\alpha}=\bigcup_{\alpha \in \Pi \setminus I_\lambda}\mathcal{V}(s_\alpha w_0),
\]
where $I_\lambda = \{ \alpha \in \Pi \mid ( -w\rho, \alpha^\vee ) \in \mathbb{Z}_{>0} \}$.   Moreover, $I_\lambda$ contains all the simple short roots.
\end{lem}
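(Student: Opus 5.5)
The plan is to combine the general description of the orbital varieties of $\mathcal{O}_{\rm m}$ with the parabolic criterion of Proposition~\ref{Same}. By \cite{Jo84}, $V(L_w)$ is a union of orbital varieties of $\mathcal{O}_{\rm m}$, that is, a union of irreducible components of $\overline{\mathcal{O}}_{\rm m}\cap\mathfrak{n}$. So the first step is to list these components and match them with the sets $\mathcal{V}(s_\alpha w_0)$.

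\textbf{Step 1: the components of $\overline{\mathcal{O}}_{\rm m}\cap\mathfrak{n}$.} For a simple root $\alpha$ we have
\[
\mathfrak{n}\cap s_\alpha w_0(\mathfrak{n})=\mathfrak{n}\cap s_\alpha(\mathfrak{n}^-)=\mathfrak{g}_\alpha ,
\]
because $\alpha$ is the only positive root made negative by $s_\alpha$. Hence $\mathcal{V}(s_\alpha w_0)=\overline{Be_\alpha}$, which gives the second equality in the statement. Next I will show that the orbital varieties of $\mathcal{O}_{\rm m}$ are exactly the sets $\overline{Be_\alpha}$ with $\alpha$ a long simple root.
\begin{itemize}
\item For $\alpha$ long, $e_\alpha$ is $G$-conjugate to the highest root vector, so it lies in $\mathcal{O}_{\rm m}$.
\item The dimension of $\overline{Be_\alpha}$ should equal $\tfrac12\dim\mathcal{O}_{\rm m}=(\rho,\beta^\vee)-1$, computed via the $B$-stabilizer or via the known fact $\dim\mathcal{V}(w)=\tfrac12\dim\mathcal{O}_w$.
\item Conversely, every $x\in\overline{\mathcal{O}}_{\rm m}\cap\mathfrak{n}$ has a nonzero component along some simple root space, up to $B$-conjugacy; this uses the standard description $\overline{\mathcal{O}}_{\rm m}\cap\mathfrak{n}=\bigcup_{\alpha\ \text{long simple}}\overline{Be_\alpha}$.
\item For $\alpha$ short, $e_\alpha$ lies in the short-root orbit, which is strictly larger than $\mathcal{O}_{\rm m}$. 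So $\overline{Be_\alpha}\not\subset\overline{\mathcal{O}}_{\rm m}$.
\end{itemize}
This step is where the main work lies. I would cite the standard structure of the minimal orbit rather than reprove it.

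\textbf{Step 2: which $\overline{Be_\alpha}$ lie in $\mathfrak{u}_{\rm J}$.} I will show that $\overline{Be_\alpha}\subset\mathfrak{u}_{\rm J}$ if and only if $\alpha\notin{\rm J}$.
\begin{itemize}
\item Suppose $\alpha\notin{\rm J}$. The set $Be_\alpha$ lies in the span of the root spaces $\mathfrak{g}_\gamma$ with $\gamma-\alpha$ a nonnegative combination of simple roots. Each such $\gamma$ has $\alpha$-coefficient at least $1$, so $\gamma\notin\Phi_{\rm J}$. Hence $Be_\alpha\subset\mathfrak{u}_{\rm J}$, and the same holds for the closure.
\item Suppose $\alpha\in{\rm J}$. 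Then $e_\alpha\notin\mathfrak{u}_{\rm J}$, so the inclusion fails.
\end{itemize}

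\textbf{Step 3: conclusion.} Write $V(L_w)=\bigcup_{\alpha\in A}\overline{Be_\alpha}$ for some set $A$ of long simple roots; this is possible by Step 1. The weight $\lambda=-w\rho$ is integral and regular, so $(\lambda,\alpha^\vee)\neq 0$ for every $\alpha$. Hence $I_\lambda$ is precisely the largest ${\rm J}$ for which $\lambda$ is $\Phi^+_{\rm J}$-dominant.
\begin{itemize}
\item By Proposition~\ref{Same}, $V(L_w)\subset\mathfrak{u}_{I_\lambda}$. By Step 2, this forces $A\cap I_\lambda=\varnothing$.
\item Conversely, let $\alpha\notin I_\lambda$ and suppose $\alpha\notin A$. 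Step 2 gives $V(L_w)\subset\mathfrak{u}_{\rm J}$, where ${\rm J}=\Pi\setminus A$, and ${\rm J}$ contains both $I_\lambda$ and $\alpha$. Proposition~\ref{Same} then says $\lambda$ is ${\rm J}$-dominant, so $\alpha\in I_\lambda$, a contradiction. Hence $A=\Pi\setminus I_\lambda$.
\item Every short simple root $\alpha$ lies in $\Pi\setminus A$, since $A$ consists of long roots. The same dominance argument applied to ${\rm J}=\Pi\setminus A$ shows that all short simple roots belong to $I_\lambda$.
\end{itemize}
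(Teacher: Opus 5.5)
The paper gives no proof of this lemma; it is quoted from \cite[Thm.~4.2]{BMXX}. Your argument is a correct reconstruction from the same ingredients the paper imports alongside it: Joseph's theorem that $V(L_w)$ is a union of orbital varieties of $\mathcal{O}_{\rm m}$, and Proposition~\ref{Same}, which is \cite[Prop.~4.1]{BMXX}.

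Once $\overline{\mathcal{O}}_{\rm m}\cap\mathfrak{n}=\bigcup_{\alpha\ \text{long simple}}\overline{Be_\alpha}$ is granted, your Steps 2 and 3 are complete. They use neither integrality nor regularity of $\lambda$, since $\Phi^+_{\rm J}$-dominance means ${\rm J}\subseteq I_\lambda$ by definition. So your proof actually gives the statement for an arbitrary minimal $L(\lambda)$, as in \cite{BMXX}. That general form is the only one where the short-root clause has content. The annihilator orbit of an integral $L_w$ is special, and $\mathcal{O}_{\rm m}$ is not special in non-simply-laced types.

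Two small corrections to Step 1.
\begin{enumerate}
\item The dimension is off by one: $\tfrac12\dim\mathcal{O}_{\rm m}=h^\vee-1=(\rho,\beta^\vee)$, with $h^\vee$ the dual Coxeter number. For example, for $\mathfrak{sl}_2$ the set $\overline{Be_\alpha}=\mathfrak{g}_\alpha$ is a line and $(\rho,\alpha^\vee)=1$. This is harmless, since you never use the number.
\item Your ``conversely'' bullet invokes the very decomposition it is meant to justify, so Step 1 is a citation rather than an argument. It is easy to make self-contained, as follows.
\end{enumerate}

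Let $P\supseteq B$ be the stabilizer of the line $\mathbb{C}e_\beta$. The Bruhat decomposition $G=\bigcup_w BwP$ gives $\mathcal{O}_{\rm m}=\bigcup_{\gamma\ \text{long}}Be_\gamma$. Every element of $Be_\gamma$ has nonzero $\mathfrak{g}_\gamma$-component, so only $\gamma>0$ contribute to $\mathcal{O}_{\rm m}\cap\mathfrak{n}$.

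Now take a non-simple long $\gamma>0$ and a simple $\alpha$ with $k=(\gamma,\alpha^\vee)>0$. Then $\gamma'=s_\alpha\gamma$ is long, positive, and of smaller height. Since $\gamma$ is long, $\gamma+\alpha$ is not a root, so $\gamma$ is the top of the $\alpha$-string through $\gamma'$. Hence $u^{-k}\exp(u\,\mathrm{ad}\,e_\alpha)e_{\gamma'}\in Be_{\gamma'}$ tends to a nonzero multiple of $e_\gamma$ as $u\to\infty$. This gives $\overline{Be_\gamma}\subseteq\overline{Be_{\gamma'}}$.

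Induction on height yields the decomposition. Your own weight argument from Step 2 shows the sets $\overline{Be_\alpha}$ are pairwise non-nested, so they are exactly the irreducible components.
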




For each $s\in S$, denote $\mathcal{C}_s=\{w\in \mathcal{C}\mid \mathcal{L}(w)=\{s\}\}$, where $\mathcal{L}(w)=\{s\in S\mid \ell(sw)< \ell(w)\}$.

\begin{lem}[{\cite[\S 3.5]{Lus83}}]
$\mathcal{C}_s$ is a KL right cell for any $s\in S$.
    
\end{lem}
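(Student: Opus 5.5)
The statement is Lusztig's result that $\mathcal{C}_s=\{w\in\mathcal{C}\mid \mathcal{L}(w)=\{s\}\}$ is a right cell. My plan is to show that $\mathcal{C}$ is a union of right cells and that these right cells are exactly the sets $\mathcal{C}_s$.

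First I will describe $\mathcal{C}_s$ combinatorially. An element with a unique reduced expression $s_{i_1}s_{i_2}\cdots s_{i_k}$ has $\mathcal{L}(w)=\{s_{i_1}\}$. Indeed, if some $t\neq s_{i_1}$ were also a left descent, we could produce a reduced expression beginning with $t$, which would be a second reduced expression. So the sets $\mathcal{C}_s$, for $s\in S$, partition $\mathcal{C}$. Moreover, an element of $\mathcal{C}$ is a reduced word in which consecutive letters $s_{i_j},s_{i_{j+1}}$ do not commute and no braid relation $s_is_js_i\cdots$ of full length $m_{ij}$ appears. So $\mathcal{C}_s$ is the set of such words that begin with $s$.

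Second, I will check the two inclusions. For a fixed right cell $\mathcal{C}_R\subset\mathcal{C}$, the left descent set is constant on $\mathcal{C}_R$, by the standard fact (KL79, Prop. 2.4) that $x\leq_R y$ implies $\mathcal{L}(x)\supseteq\mathcal{L}(y)$. Hence each right cell inside $\mathcal{C}$ lies in a single $\mathcal{C}_s$, since $\mathcal{C}$ is a two-sided cell by Lemma \ref{hklem2} and is therefore a union of right cells.

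For the reverse inclusion I must show that any two elements of $\mathcal{C}_s$ are right equivalent. I will use the elementary right star operations. If $w=s_{i_1}\cdots s_{i_k}\in\mathcal{C}$ and $t\notin\mathcal{R}(w)$, then
\[
C_wC_t=C_{wt}+\sum_{z<w,\ zt<z}\mu(z,w)\,C_z .
\]
When $wt\in\mathcal{C}$ this gives $wt\leq_R w$, and since both lie in the same two-sided cell $\mathcal{C}$, they are right equivalent. That step uses the fact (Lusztig) that $\leq_R$ together with $\sim_{LR}$ implies $\sim_R$, which holds for Weyl groups.

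It then suffices to connect each $w\in\mathcal{C}_s$ to $s$ itself through elements of $\mathcal{C}$ by successively deleting the last letter. The prefix $s_{i_1}\cdots s_{i_{k-1}}$ again has a unique reduced expression and lies in $\mathcal{C}_s$ (for $k\ge2$). Also $w=(\text{prefix})\cdot t$ with $t\notin\mathcal{R}(\text{prefix})$, since the prefix's only right descent is $s_{i_{k-1}}\neq t$. Hence $w\leq_R \text{prefix}$, and both lie in $\mathcal{C}$, so $w\sim_R$ prefix. By induction, $w\sim_R s$.

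The main obstacle is the step "$\leq_R$ within a two-sided cell implies $\sim_R$". This needs Lusztig's $a$-function properties, or, specifically for $\mathcal{C}$, a direct computation. For that direct route I would compute $C_{\text{prefix}}C_{s_{i_{k-1}}}$-type products showing prefix $\leq_R w$. Concretely, for $w=x t$ with $x$ ending in $u$ and $m_{ut}\ge3$, the product $C_wC_u$ contains $C_x$ with coefficient $\mu(x,w)=1$. This gives the reverse relation $x\leq_R w$ by hand and avoids the general theorem.
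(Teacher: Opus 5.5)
Your argument is correct. The paper does not prove this lemma; it only quotes Lusztig. So your proposal differs from the paper in that it is an actual derivation, from Lemma \ref{hklem2} plus standard Kazhdan--Lusztig facts. The two inclusions come from different sources:
\begin{itemize}
\item $\mathcal{C}_R(s)\subseteq\mathcal{C}_s$ follows because $\mathcal{C}$ is a union of right cells and $\mathcal{L}$ is constant on right cells.
\item $\mathcal{C}_s\subseteq\mathcal{C}_R(s)$ follows from your prefix chain.
\end{itemize}

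The hard part of Lusztig's statement is that no element with two reduced expressions is right equivalent to $s$. In your argument this is carried entirely by Lemma \ref{hklem2}. What you have is therefore a reduction to that lemma, not an independent proof of Lusztig's result. It is nonetheless complete in the context of the paper, where Lemma \ref{hklem2} is taken as given.

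For the second inclusion, make your fallback route the main argument. Write $w=xt$ with $x$ ending in $u$.
\begin{itemize}
\item Uniqueness of the reduced expression gives $\mathcal{R}(w)=\{t\}$, so $wu>w$.
\item Then $C_wC_u=C_{wu}+\sum_{z<w,\,zu<z}\mu(z,w)C_z$.
\item This contains $C_x$ with coefficient $\mu(x,w)=1$, since $\ell(w)-\ell(x)=1$ and $x\neq wu$. Hence $x\leq_R w$.
\item Together with $w\leq_R x$ from $C_xC_t$, this gives $w\sim_R x$ directly.
\end{itemize}
This makes that half elementary and valid in any Coxeter group. Your primary route instead imports Lusztig's $a$-function theorem that $\leq_R$ together with $\sim_{LR}$ implies $\sim_R$. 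That theorem rests on positivity and is not needed here.
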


For each $\alpha_i\in \Pi$, we denote the corresponding simple reflection by $s_i:=s_{\alpha_i}$ and the KL right cell by $\mathcal{C}_i:=\mathcal{C}_{s_i}$. Let $ \preceq $ be the weak Bruhat order which is defined by \[
x\preceq y\text{ iff } y=xz \text{ with }\ell(y)=\ell(x)+\ell(z).
\]

Note that $\mathcal{C}_i  $ is the set of $ w\in \mathcal{C} $ such that $ s_{\alpha_i} w<w $. These elements can be obtained  from enumerating forward paths starting from $ i $ in the Dynkin graph.
Then we have the following.

\begin{lem}\label{rightcell-ci}
    The KL right cell $\mathcal{C}_i$ can be given as follows:
\begin{enumerate}
    \item For type $A_{n}$, we have 
    $$\mathcal{C}_i=\{w\mid s_i\preceq w \preceq s_is_{i+1}\cdots s_{n}\text{ or }s_i\cdots s_1\}.$$

\item For type $B_{n}$ and $C_n$, we have 
    \begin{align*}
    \mathcal{C}_i=&\{w\mid s_i\preceq w\preceq s_is_{i-1}\cdots s_1 \text{~or~} s_is_{i+1}\cdots s_{n}s_{n-1}s_{n-2}\cdots s_{2}s_1\}\\
    &\text{~for~} 1\leq i\leq n-1,\\
    \mathcal{C}_n=&\{w\mid s_n\preceq w\preceq s_ns_{n-1}\cdots s_1 \text{~or~} s_ns_{n-1}s_{n}\}.
    \end{align*}

\item For type $D_{n}$, we have 
    \begin{align*}
        \mathcal{C}_i=\{ w\mid &s_i\preceq w\preceq s_is_{i+1}\cdots s_{n-2} s_{n-1} \text{~or~} s_is_{i+1}\cdots s_{n-2} s_{n}\\
        &\text{~or~} s_i\cdots s_1\} \text{~for~} 1\leq i\leq n-2,\\
        \mathcal{C}_{n-1}=\{w\mid & s_{n-1}\preceq w\preceq   s_{n-1}s_{n-2}\cdots s_{1}\text{~or~}s_{n-1}s_{n-2}s_n\},\\
        \mathcal{C}_{n}=\{w\mid &s_{n}\preceq w\preceq s_{n}s_{n-2}\cdots s_{1} \text{~or~}s_{n}s_{n-2}s_{n-1}\}.
    \end{align*}
\item For type $E_{6}$, we have 
\begin{align*}
\mathcal{C}_1&=\{ w\mid 1\preceq w\preceq 13456 \text{ or }1342 \},\\
\mathcal{C}_2&=\{ w\mid 2\preceq w\preceq 2431 \text{ or }2456 \},\\
\mathcal{C}_3&=\{ w\mid 3\preceq w\preceq 31 \text{ or }3456 \text{ or }342  \},\\
\mathcal{C}_4&=\{ w\mid 4\preceq w\preceq 431 \text{ or }456 \text{ or }42 \},\\
\mathcal{C}_5&=\{ w\mid 5\preceq w\preceq 5431 \text{ or }542 \text{ or }56 \},\\
\mathcal{C}_6&=\{ w\mid 6\preceq w\preceq 65431 \text{ or }6542 \}.
\end{align*}

\item For type $E_{7}$, we have 
\begin{align*}
\mathcal{C}_1&=\{ w\mid 1\preceq w\preceq 134567 \text{ or }1342 \},\\
\mathcal{C}_2&=\{ w\mid 2\preceq w\preceq 2431 \text{ or }24567 \},\\
\mathcal{C}_3&=\{ w\mid 3\preceq w\preceq 31 \text{ or }34567 \text{ or }342  \},\\
\mathcal{C}_4&=\{ w\mid 4\preceq w\preceq 431 \text{ or }4567 \text{ or }42 \},\\
\mathcal{C}_5&=\{ w\mid 5\preceq w\preceq 5431 \text{ or }542 \text{ or }567 \},\\
\mathcal{C}_6&=\{ w\mid 6\preceq w\preceq 67 \text{ or } 65431 \text{ or }6542 \},\\
\mathcal{C}_7&=\{ w\mid 7\preceq w\preceq  765431 \text{ or }76542 \}.
\end{align*}

\item For type $E_{8}$, we have 
\begin{align*}
\mathcal{C}_1&=\{ w\mid 1\preceq w\preceq 1345678 \text{ or }1342 \},\\
\mathcal{C}_2&=\{ w\mid 2\preceq w\preceq 2431 \text{ or }245678 \},\\
\mathcal{C}_3&=\{ w\mid 3\preceq w\preceq 31 \text{ or }345678 \text{ or }342  \},\\
\mathcal{C}_4&=\{ w\mid 4\preceq w\preceq 431 \text{ or }45678 \text{ or }42 \},\\
\mathcal{C}_5&=\{ w\mid 5\preceq w\preceq 5431 \text{ or }542 \text{ or }5678 \},\\
\mathcal{C}_6&=\{ w\mid 6\preceq w\preceq 678 \text{ or } 65431 \text{ or }6542 \},\\
\mathcal{C}_7&=\{ w\mid 7\preceq w\preceq  765431 \text{ or }76542\text{ or }78 \},\\
\mathcal{C}_8&=\{ w\mid 8\preceq w\preceq  8765431 \text{ or }876542 \}.
\end{align*}
\item For type $F_{4}$, we have 
\begin{align*}
\mathcal{C}_1&=\{ w\mid 1\preceq w\preceq 1234 \text{ or }12321 \},\\
\mathcal{C}_2&=\{ w\mid 2\preceq w\preceq 21 \text{ or }234\text{ or }2321 \},\\
\mathcal{C}_3&=\{ w\mid 3\preceq w\preceq 34 \text{ or }321 \text{ or }3234  \},\\
\mathcal{C}_4&=\{ w\mid 4\preceq w\preceq 4321 \text{ or }43234 \}.
\end{align*}

\item For type $G_{2}$, we have 
\begin{align*}
\mathcal{C}_1&=\{ w\mid 1\preceq w\preceq 12121  \},\\
\mathcal{C}_2&=\{ w\mid 2\preceq w\preceq 21212 \}.
\end{align*}
    
\end{enumerate}

\end{lem}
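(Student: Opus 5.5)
We would prove the lemma by turning it into a combinatorial problem about words. The key observation is that for $w\in\mathcal{C}$ the left descent set $\mathcal{L}(w)$ consists exactly of the first letter of the unique reduced word of $w$. Indeed, if $s\in\mathcal{L}(w)$, then $w$ has a reduced expression beginning with $s$; uniqueness forces this to be the given word. Hence $\mathcal{C}_i$ is the set of elements whose unique reduced word starts with $s_i$. Moreover, every prefix of a unique reduced word is again a unique reduced word: a second reduced expression of a prefix, followed by the remaining letters, would give a second reduced expression of $w$. So $\mathcal{C}_i$ is closed downward in the weak order $\preceq$ above $s_i$. It is therefore a union of intervals $[s_i,m]$ over its $\preceq$-maximal elements $m$, and the lemma amounts to listing those maximal words.

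Next we would characterize unique reduced words via Matsumoto's theorem. Any two reduced words of the same element are connected by braid moves. Hence a reduced word $s_{j_1}\cdots s_{j_k}$ is the unique reduced word of its element if and only if it admits no braid move. This is equivalent to two conditions:
\begin{enumerate}
\item consecutive letters are non-commuting, i.e. adjacent in the Dynkin diagram;
\item there is no alternating factor $s t s\cdots$ of length $m_{st}$.
\end{enumerate}
The elements of $\mathcal{C}_i$ thus correspond to walks in the Dynkin graph starting at $i$ that satisfy (2) and give a reduced word.

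In the simply laced cases ($A_n$, $D_n$, $E_6$, $E_7$, $E_8$) we have $m=3$, so (2) forbids immediate backtracking $sts$. Since the Dynkin diagram is a tree, such walks are exactly the simple paths starting at $i$, and these are automatically reduced: a simple path gives a product of distinct simple reflections, whose length is the number of letters. Reading off the maximal simple paths from $i$ gives items (1), (3), (4), (5), (6). For instance, in $E_6$ with Bourbaki labels the maximal paths from $3$ are $31$, $3456$ and $342$.

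In the non-simply laced cases backtracking is allowed exactly across the multiple edge, in the form $s t s$ with $m_{st}=4$, or up to $ststs$ with $m_{st}=6$ in $G_2$. We would enumerate the walks satisfying (2) case by case.
\begin{itemize}
\item In $B_n/C_n$, a walk from $i<n$ either goes left to $1$, or goes right to $n$, turns back through $s_{n-1}$, and can then only continue left down to $s_1$. It cannot turn again, because a later turn would require a backtrack $sts$ on a simple edge. For $i=n$ the walks are $s_ns_{n-1}\cdots s_1$ and $s_ns_{n-1}s_n$. In the latter, continuing with $s_{n-1}$ would create the braid factor $s_ns_{n-1}s_ns_{n-1}$.
\item For $F_4$ the same analysis applies at the double edge $\{2,3\}$.
\item For $G_2$ the words are the alternating words of length at most $5$.
\end{itemize}

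The step we expect to be the main obstacle is verifying that these candidate maximal words, such as $s_is_{i+1}\cdots s_ns_{n-1}\cdots s_1$, $43234$ and $12321$, are actually reduced; condition (2) alone does not guarantee this. We would check it by computing the inversion set root by root. Equivalently, we would show that applying the letters successively from the right to the successive partial products always sends a simple root to a positive root: the running root $\beta$ satisfies $(\beta,\alpha^\vee)>0$ for the next letter $\alpha$, so the length grows by one at each step. For $B_n$ this is the standard computation that $s_1\cdots s_ns_{n-1}\cdots s_1$ is the reflection in $\varepsilon_1$, of length $2n-1$. The $F_4$ and $G_2$ words are short enough to check by hand, or with the Python code mentioned after Theorem \ref{smcell}. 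Finally, any one-letter extension of a maximal word either violates (1) or (2) or fails to be reduced, which confirms maximality.
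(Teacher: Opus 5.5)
Your proposal is correct and takes the same route as the paper, which only asserts that $\mathcal{C}_i$ is obtained by enumerating paths from $i$ in the Dynkin graph. Your Matsumoto-based characterization of unique reduced words (including the backtracking allowed across the multiple edge) supplies the justification the paper omits and recovers exactly the listed maximal words; the reducedness check you flag as the main obstacle is in fact automatic, since by Tits' solution of the word problem a word admitting neither a braid move nor a deletion $ss\to 1$ is already reduced (alternatively, $s_i\cdots s_n\cdots s_1$ is a suffix of the reduced word $s_1\cdots s_n\cdots s_1$, and $2321$, $3234$ are suffixes of $12321$, $43234$).
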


In the above Lemma \ref{rightcell-ci}, for exceptional types we use $``ijk\cdots"$ to represent the Weyl group element $``s_is_js_k\cdots"$.

Using one-line notation to represent a classical Weyl group element $w \in W$ as $w = (w(1), w(2), \dots, w(n))$ from \cite[\S 3]{BXX}, we present the following Table \ref{Elements-in-cell} for classical Weyl groups.

\begin{table}[!htbp]
\centering
\resizebox{0.75\textwidth}{!}{
\begin{minipage}{\textwidth}
\begin{tabular}{c|c|l|c}
\hline
$\Phi$ & \multicolumn{2}{c|}{Elements in right cells of $W(\Phi)$} & $\#\mathcal{C}_i$ \\ \hline
\multirow{7}{*}{$A_n$}
  & \multirow{7}{*}{$\mathcal{C}_i$} & $(1,2,\cdots,i-1,i+1,i,\cdots,n+1)$ & \multirow{7}{*}{$n$} \\ 
   &                                     & \quad \quad \quad $\vdots$ &                         \\ 
    &                                     & $(1,2,\cdots,i-1,i+1,\cdots,n+1,i)$ &                         \\ 
	 &                                   & $(1,2,\cdots,i-2, i+1, i-1, i, i+2,\cdots,n+1)$ &                         \\ 
	  &                                     & \quad \quad \quad $\vdots$ &                         \\ 
  &                                     & $(i+1,1,2,\cdots,i,i+2,\cdots,n)$ &                         \\ 
          \hline
\multirow{10}{*}{$B_n(C_n)$}
  & \multirow{9}{*}{$\mathcal{C}_i$} & $(1,2,\cdots,n-i-1,n-i+1,n-i,\cdots,n)$ & \multirow{9}{*}{$2n-1$} \\ 
   &                                     & \quad \quad \quad $\vdots$ &                         \\ 
    &                                     & $(1,2,\cdots,n-i-1,n-i+1,\cdots,n,n-i)$ &                         \\ 
	 &                                   & $(1,2,\cdots,n-i-2,n-i+1,n-i-1,\cdots,n)$ &                         \\ 
	  &                                     & \quad \quad \quad  $\vdots$ &                         \\ 
  &                                     & $(n-i+1,1,2,\cdots,n-i-2,n-i-1,\cdots,n)$ &                         \\ 
 &                                   & $(-(n-i+1),1,2,\cdots,n-i,n-i+2,\cdots,n)$ &                         \\ 
	  &                                     & \quad \quad \quad  $\vdots$ &                         \\ 
  &                                     & $(1,2,\cdots,n-i,n-i+2,\cdots,n,-(n-i+1))$ &                         \\             
  \cline{2-4}
  & \multirow{4}{*}{$\mathcal{C}_n$} & $(-1,2,\cdots,n)$ \\
  && \quad \quad \quad $\vdots$ & \multirow{1}{*}{$n+1$}          \\
 &&  $(2,\cdots,n,-1)$\\
 && $(-2,-1,3,\cdots,n)$                 \\ \hline

\multirow{11}{*}{$D_n$}
  & \multirow{7}{*}{$\mathcal{C}_i$}     & $(1,2,\cdots,n-i-1,n-i+1,n-i,\cdots,n)$ & \multirow{7}{*}{$n$} \\ 
   &                                     & \quad \quad \quad  $\vdots$ &                         \\ 
    &                                     & $(n-i+1,1,2,\cdots,n-i,n-i+2,\cdots,n)$ &                         \\ 
	&                                     & $(-(n-i+1),-1,2,\cdots,n-i,n-i+2,\cdots,n)$ &                         \\ 
	 &                                   & $(1,2,\cdots,n-i-1,n-i+1,n-i+2,n-i,\cdots,n)$ &                         \\ 
	  &                                     & \quad \quad \quad $\vdots$ &                         \\ 
  &                                     & $(1,2,\cdots,n-i-1,n-i+1,n-i,\cdots,n,n-i+2)$ &                         
    \\ \cline{2-4}
  & \multirow{4}{*}{$\mathcal{C}_{n-1}$} & $(2,1,3,\cdots,n)$\\
  && \quad \quad \quad $\vdots$ & \multirow{1}{*}{$n$}\\
  && $(2,3,\cdots,n,1)$\\
  &&$(-3,-2,1,4,\cdots,n)$       \\ \cline{2-4}
  & \multirow{4}{*}{$\mathcal{C}_n$}     &$(-2,-1,3,\cdots,n)$\\
  && \quad \quad \quad  $\vdots$ & \multirow{1}{*}{$n$} \\
  & & $(-2,3,\cdots,n,-1)$\\
  && $(3,-2,-1,4,\cdots,n)$  \\ \hline
\end{tabular}%
\vspace{0.5em}
\captionof{table}{List of elements in the right cell $\mathcal{C}_i$}
\label{Elements-in-cell}
\end{minipage}
}
\end{table}

From \cite[Table 1]{BKOP}, we have the following result.

\begin{lem}\label{long}
    The longest element $w_0$ of a Weyl group can be expressed as the following:
    \begin{itemize}
        \item for type $A_n$,
        \begin{align*}
            w_{A_{n}}&=s_n(s_{n-1}s_n)\cdots (s_1s_2\cdots s_n)\\
            &=(n,n-1,...,1);
        \end{align*}
        \item for type $B_n$ or $C_n$,
        \begin{align*}
w_{B_{n}}=w_{C_{n}}=&s_n(s_{n-1}s_ns_{n-1})\cdots (s_ks_{k+1}\cdots s_{n-1} s_ns_{n-1}\cdots s_{k+1}s_k)\\
&\cdots (s_1s_2\cdots s_{n-1} s_ns_{n-1}\cdots s_2s_1)\\
=&(-1,-2,\dots,-n);
        \end{align*}
        
       
        \item for type $D_n$, 
        \begin{align*}
w_{D_{n}}=& s_ns_{n-1}(s_{n-2}s_ns_{n-1}s_{n-2})\cdots(s_k\cdots s_{n-2}s_ns_{n-1}s_{n-2}\cdots s_k)\\
&\cdots(s_1\cdots s_{n-2}s_n s_{n-1}\cdots s_{1})\\
=&\begin{cases}
    (-1,-2,\dots,-n) & \text{~if~$n$~is~even},\\
    (1,-2,-3,\dots,-n) &\text{~if~$n$~is~odd}.
\end{cases}
        \end{align*}
   
        \item for type $E_6$,
        \begin{align*}
 w_{E_{6}}=&s_5s_{4}s_{3}s_5s_{4}s_{3}s_2 s_{3}s_5s_{4}s_{3} s_2s_1s_2 s_{3}s_5s_{4}s_3 \\
 \cdot &
s_2s_{1}s_6s_5s_3s_4s_2s_1s_3s_2s_5s_3s_4s_6s_5s_3s_2s_1;
        \end{align*}
     \item for type $E_7$,
     \begin{align*}
w_{E_{7}}
=&s_5s_{4}s_{3}s_5s_{4}s_{3}s_2 s_{3}s_5s_{4}s_{3} s_2s_1s_2 s_{3}s_5s_{4}s_3 
s_2s_{1}s_6\\
 \cdot & s_5s_3s_4s_2s_1s_3s_2s_5s_3s_4s_6s_5s_3s_2s_1 s_7s_6s_5s_3s_4s_2\\
 \cdot & s_1s_3s_2s_5s_3s_4s_6s_5s_3s_2s_1s_7s_6s_5s_3s_4s_2s_3s_5s_6s_7;
        \end{align*}
        
        \item for type $E_8$, 
        \begin{align*}
w_{E_{8}}=&s_5s_{4}s_{3}s_5s_{4}s_{3}s_2 s_{3}s_5s_{4}s_{3} s_2s_1s_2 s_{3}s_5s_{4}s_3 
s_2s_{1}s_6\\
 \cdot & s_5s_3s_4s_2s_1s_3s_2s_5s_3s_4s_6s_5s_3s_2s_1 s_7s_6s_5s_3s_4s_2\\
 \cdot & s_1s_3s_2s_5s_3s_4s_6s_5s_3s_2s_1s_7s_6s_5s_3s_4s_2s_3s_5s_6s_7\\
 \cdot &  
 s_8s_7s_6s_5s_3s_4s_2s_1s_3s_2s_5s_3s_4s_6s_5s_3s_2s_1s_7s_6s_5\\ \cdot & s_3s_4s_2s_3s_5s_6s_7s_8s_7s_6s_5s_3s_4s_2s_1s_3s_2s_5s_3s_4s_6\\ \cdot &s_5s_3s_2s_1s_7s_6s_5s_3s_4s_2s_3s_5s_6s_7s_8;
      \end{align*}
\item for type $F_4$,
        \begin{align*}
 w_{F_4}=&s_3s_{2}s_{3}s_2 s_{1}s_{2}s_3 s_{2}s_1 s_{4}s_{3} s_2s_1 s_3s_2 s_{3} s_{4}s_3 \\
 \cdot &
s_2s_{1} s_3s_2s_3s_4;
        \end{align*}
\item for type $G_2$,
        \begin{align*}
 w_{G_2}=&s_1s_{2}s_{1}s_2 s_{1}s_{2}\\
 =&s_2s_1s_{2}s_{1}s_2 s_{1}.
        \end{align*}

    \end{itemize}
\end{lem}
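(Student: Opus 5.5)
The plan is to use a single criterion that works uniformly in all types. The group $W$ acts simply transitively on Weyl chambers, so an element $w\in W$ satisfies $w=w_0$ if and only if $w\rho=-\rho$. Since $\ell(w_0)=|\Phi^+|$, any word of length $|\Phi^+|$ whose product sends $\rho$ to $-\rho$ is automatically a reduced expression of $w_0$. So for each type it suffices to do two things: (i) count the letters in the displayed word and compare with $|\Phi^+|$, and (ii) check that the product acts on $\rho$ (or on the standard coordinates) as $-1$, up to the diagram automorphism in the one-line form.

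For the classical types I would organize the words along the chain of standard parabolic subgroups. The factor $(s_k s_{k+1}\cdots s_n)$ in type $A_n$, the factor $(s_k\cdots s_{n-1}s_ns_{n-1}\cdots s_k)$ in types $B_n/C_n$, and the factor $(s_k\cdots s_{n-2}s_ns_{n-1}s_{n-2}\cdots s_k)$ in type $D_n$ should each be the longest minimal coset representative of $W_{\{k,\dots\}}$ modulo $W_{\{k+1,\dots\}}$. Then $w_0=w_0(J_{k+1})\,u_k$ with lengths adding, by the standard factorization $w=w_J w^J$. The letter counts are as follows.
\begin{itemize}
\item In type $A_n$ they are $\sum_{k=1}^n (n-k+1)=n(n+1)/2$.
\item In types $B_n/C_n$ they are $\sum_{k=1}^n (2(n-k)+1)=n^2$.
\item In type $D_n$ they are $2+\sum_{k=1}^{n-2}2(n-k)=n(n-1)$.
\end{itemize}
These are exactly $|\Phi^+|$ in each case. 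For (ii), I would apply the factors successively to the identity in the one-line signed-permutation notation of \cite[\S 3]{BXX}. Each block moves the entry in position $k$ past all later entries and, in types $B/C/D$, flips its sign. By induction on $n-k$, the result is $(n,\dots,1)$ in type $A$ and $(-1,\dots,-n)$ in types $B/C$. In type $D$ the result has all signs negated except that the parity constraint forces one unnegated entry when $n$ is odd, giving the stated forms. In each case this is the element acting as $-1$, or as $-1$ composed with the diagram involution, on the roots, so it is $w_0$.

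For $G_2$ both words have length $6=|\Phi^+|$. They are the two alternating words, equal by the braid relation $(s_1s_2)^3=(s_2s_1)^3$, and reduced because $m_{12}=6$. For $F_4$, $E_6$, $E_7$, $E_8$ I would first check that the words have $24$, $36$, $63$ and $120$ letters respectively. I would then compute $w\rho$ mechanically in fundamental-weight coordinates, where $s_i$ changes only the $i$-th coordinate and its Dynkin neighbours via the Cartan matrix, and verify that the final vector is $-\rho$. This can also be confirmed with the Python code accompanying the paper.

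The main obstacle is the exceptional types, especially $E_8$ with $120$ reflections applied in sequence. There is no structural shortcut there beyond noting that the $E_8$ word extends the $E_7$ word, which extends the $E_6$ word (since $w_0(E_8)=w_0(E_7)\,u$ with $u$ the longest minimal coset representative). So I would verify the $E_6\subset E_7\subset E_8$ coset factors inductively, checking that each appended block sends the relevant partial vector to the correct negative chamber.
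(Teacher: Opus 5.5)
\textbf{Where the proposal stands.} Your criterion is correct: a word of length $|\Phi^+|$ whose product sends $\rho$ to $-\rho$ is a reduced expression of $w_0$. This is a different route from the paper, which gives no argument and simply quotes \cite[Table 1]{BKOP}. A direct check has the advantage of testing the quoted words against the conventions actually used in this paper.

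For the classical types and $G_2$ your plan goes through. For $F_4$ it also succeeds: the displayed word does send $2\rho$ to $-2\rho$. Two small corrections are needed in the classical part.
\begin{itemize}
\item $W(A_n)=S_{n+1}$, so the one-line form of $w_0$ is $(n+1,n,\dots,1)$. You copied the off-by-one $(n,\dots,1)$ from the statement.
\item The palindromic blocks do not move an entry past the later ones. In types $B$ and $C$, $s_k\cdots s_{n-1}s_ns_{n-1}\cdots s_k$ is the sign change of a single coordinate. In type $D$, $s_k\cdots s_{n-2}s_ns_{n-1}s_{n-2}\cdots s_k$ changes the signs of two coordinates, one of which is always the same fork coordinate. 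Together with $s_ns_{n-1}$, that coordinate is flipped $n-1$ times. This, not a vague parity constraint, is what produces the even/odd dichotomy in type $D$.
\end{itemize}

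\textbf{The gap: types $E_6$, $E_7$, $E_8$.} Here you only promise that the mechanical computation will end at $-\rho$. In the labeling the paper uses for type $E$, that promise is false. The paper's labeling is Bourbaki's: in Lemma \ref{rightcell-ci}, node $2$ hangs off node $4$ and $1,3,4,5,6$ is the long chain; in the $E_6$ proposition of \S\ref{typeef}, $w_0\alpha_3=-\alpha_5$. All three words begin with $s_5s_4s_3s_5s_4s_3$. Since $s_3s_5=s_5s_3$ in this labeling,
$s_5s_4s_3s_5s_4s_3=s_5s_4s_5s_3s_4s_3=s_4s_5s_4s_4s_3s_4=s_4s_5s_3s_4$.
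So the displayed words are not reduced, their products have length $<|\Phi^+|$, and they are not $w_0$. Your check would already fail at the sixth letter.

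\textbf{How to repair it.} The words are correct in a labeling where node $3$ is trivalent, node $4$ is the one-node leg, and $1,2,3,5,6,7,8$ is the long chain (evidently the labeling of the source table). To use them here, first substitute $s_2\mapsto s_3$, $s_3\mapsto s_4$, $s_4\mapsto s_2$. After that, your inductive scheme works and can be made concrete:
\begin{itemize}
\item The prefixes of lengths $6$, $12$, $20$ are $w_0$ of the parabolic subsystems of types $A_3$, $D_4$, $D_5$. The last of these is exactly the type $D$ formula.
\item The remaining factors have lengths $16$, $27$, $57$. Each is the longest minimal coset representative. To verify this, apply the letters of the factor from left to right to $\omega_6$, $\omega_7$, $\omega_8$ respectively. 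Check that each simple reflection is applied to a weight with positive pairing, and that the final weight is $-\omega_1$, $-\omega_7$, $-\omega_8$.
\end{itemize}
Without fixing the labeling and actually carrying out this check, the exceptional part of your argument asserts an outcome that does not hold.
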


The following result will be useful in the determination of smoothness of Schubert varieties of classical types.

\begin{Thm} \label{pattern-avoid}
    Let $\Phi$ be a root system of type $B$, $C$ or $D$. Then the Schubert variety $X(w)$ for $w\in w_0\mathcal{C}$ is smooth if and only if one of the following holds:
    \begin{enumerate}
        \item $\mathfrak{g}$ is of type $B_n$, then $w\in w_0\mathcal{C}$ avoids the following patterns:
        \begin{align*}
            &\bar{1}2\bar{3}\quad 1\bar{2}\bar{3}\quad 1\bar{3}\bar{2}\quad  \bar{2}1\bar{3}\quad 2\bar{1}\bar{3}\quad  \\ & 2\bar{3}\bar{1}\quad
            \bar{3}1\bar{2}\quad \bar{3}\bar{2}1\quad
           \bar{3}2\bar{1}\quad \bar{3}\bar{4}\bar{1}\bar{2}\quad \bar{2}\bar{1}
        \end{align*}

 \item $\mathfrak{g}$ is of type $C_n$, then $w\in w_0\mathcal{C}$ avoids the following patterns:
        \begin{align*}
            &\bar{1}2\bar{3}\quad \bar{2}\bar{1}\bar{3}\quad \bar{2}1\bar{3}\quad 2\bar{1}\bar{3}\quad 2\bar{3}\bar{1}\quad \\ & \bar{3}\bar{2}\bar{1}\quad \bar{3}\bar{2}1\quad
           \bar{3}2\bar{1}\quad 3\bar{2}\bar{1}\quad \bar{3}\bar{4}\bar{1}\bar{2}\quad 1\bar{2}
        \end{align*}
        
 \item $\mathfrak{g}$ is of type $D_n$, then $w\in w_0\mathcal{C}$ avoids the following patterns:
\begin{enumerate}
    \item If $n$ is even: 
 \begin{align*}
            &\bar{1}\bar{3}\bar{2}\quad 21\bar{3}\bar{4}\quad  
            \bar{2}\bar{1}\bar{3}\quad
            2\bar{3}1\bar{4}\quad \bar{2}\bar{4}31\quad 
             \\& 
             31\bar{2}\bar{4}\quad 
            3\bar{2}1\bar{4}\quad 
            \bar{3}\bar{2}\bar{1}\quad
            3\bar{2}\bar{4}1\quad \bar{3}\bar{4}\bar{1}\bar{2}\quad 3\bar{4}1\bar{2}   
        \end{align*}

          \item If $n$ is odd:
\begin{align*}
            & \bar{1}2\bar{3}\quad 1\bar{3}\bar{2}\quad
            \bar{2}\bar{1}\bar{3}\quad \bar{2}1\bar{3}\bar{4}\quad 21\bar{3}\bar{4}\quad \\ &2\bar{1}\bar{3}\bar{4}\quad    \bar{2}4\bar{3}\bar{1}\quad 3\bar{1}\bar{2}\bar{4}\quad 31\bar{2}\bar{4}\quad  3\bar{4}\bar{1}\bar{2}\quad  
        \end{align*}          

\end{enumerate}
    \end{enumerate}
\end{Thm}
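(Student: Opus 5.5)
The plan is to reduce Theorem \ref{pattern-avoid} to the signed-permutation criterion of Theorem \ref{t:classical.patterns}. That criterion says $X(w)$ is smooth if and only if every flattened subsequence of length at most four, ${\rm fl}(w_{i_1},\dots,w_{i_k})$, is a smooth element of $W(B_k)$, $W(C_k)$ or $W(D_k)$ respectively.

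Billey's complete list of forbidden signed patterns for smoothness in types $B$, $C$, $D$ is fairly long. The point is that for $w\in w_0\mathcal{C}$ the one-line form is very rigid, so only a few of those patterns can actually occur.

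\textbf{Step 1: write down the one-line forms.} Using Lemma \ref{long} and Table \ref{Elements-in-cell}, $w=w_0x$ with $x\in\mathcal{C}$. In types $B/C$, and in type $D$ with $n$ even, $w_0=(-1,\dots,-n)$, so $w(j)=-x(j)$. In type $D$ with $n$ odd, $w_0=(1,-2,\dots,-n)$, so one sign pattern differs. Since each $x\in\mathcal{C}$ is a single cycle moving one letter past a consecutive block, possibly with one or two sign changes, $w$ has the following shape:
\begin{itemize}
\item all entries are barred, except at most two;
\item the absolute values are increasing, except for a single displaced letter.
\end{itemize}

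\textbf{Step 2: list the patterns that can occur.} Enumerate all flattenings of subsequences of length $2,3,4$ of such $w$. Because of this shape, the possible patterns are $\bar{a}\bar{b}\cdots$ with increasing absolute values, plus a bounded number of variants. The variants are a single displaced letter, one or two unbarred letters, or (in $D_n$, $n$ odd) the first entry unbarred.

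\textbf{Step 3: intersect with Billey's lists.} Intersect the patterns from Step 2 with Billey's lists of non-smooth patterns of ranks $2,3,4$ (\cite{Bi-98}, \cite{BP-05}). For $B$ and $C$ the lists must be taken separately, since smoothness differs even though the Weyl groups coincide; this explains why $\bar{2}\bar{1}$ is forbidden in $B$ but $1\bar{2}$ is forbidden in $C$.

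For each pattern that is forbidden and realizable, check it appears in the stated list. For each listed pattern, check it is indeed singular; this can be done via Theorem \ref{BP-patterns}, or by the Poincar\'e polynomial / Lakshmibai--Sandhya type tangent space count on the small rank group.

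Finally, argue that every other forbidden pattern either is not realizable inside $w_0\mathcal{C}$ or contains one of the listed patterns as a subpattern, so that avoiding the listed ones suffices.

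\textbf{Main obstacle.} The bulk of the work, and the main obstacle, is the type $D$ bookkeeping. There are two reasons:
\begin{itemize}
\item The type $D$ flattening is subtle, because the sign of a single letter in $W(D_k)$ is only determined up to parity.
\item The two parities of $n$ give different $w_0$ and hence different realizable patterns.
\end{itemize}
I would handle this by treating $n$ even and $n$ odd separately. I would also run the computer enumeration (the Python code referenced earlier) over small ranks $n\le 6$ to confirm that the reduced lists are exactly minimal. Rank $4$ already captures every length-$4$ pattern, so small ranks suffice to certify the lists.
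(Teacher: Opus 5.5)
Your plan is essentially the paper's proof: start from Billey's full signed-pattern lists for types $B/C$ and $D$, then discard every pattern that either cannot occur in the explicit one-line forms of $w\in w_0\mathcal{C}$ (read off from Table~\ref{Elements-in-cell} and Lemma~\ref{long}) or contains a retained pattern such as $\bar{2}\bar{1}$. Two small corrections: your Step~1 shape is too narrow in type $D$ (e.g.\ $w_0s_ns_{n-2}s_{n-1}=(3,2,1,\bar{4},\dots,\bar{n})$ for odd $n$ has three unbarred letters and two displaced ones, though it still contains the listed $21\bar{3}\bar{4}$), so the filtering must use the actual one-line forms; and Poincar\'e polynomials only detect rational smoothness, so they cannot certify that $\bar{2}\bar{1}$ (type $B$) or $1\bar{2}$ (type $C$) is singular.
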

\begin{proof}
    There are $27$ patterns in the signed notation in type $B$ and $55$ patterns in type $D$ listed in \cite{Bi-98}. By \cite[Thm. 4.2 and Thm. 6.1]{Bi-98}, we may exclude those patterns that contain at least two positive elements  from Table \ref{Elements-in-cell} and Lemma \ref{long}. We can also exclude the patterns that contain the subpattern $\bar{2}\bar{1}$, except for the pattern $\bar{2}\bar{1}$ itself, since if a pattern occurring in $w$ contains $\bar{2}\bar{1}$ as a subpattern, then $w$ contains the pattern $\bar{2}\bar{1}$. 
    Similarly for type $C$. Thus there are $11$ patterns in the signed notation in type $B$ or $C$.

    For type $D$, when $n$ is even, based on Table \ref{Elements-in-cell} and Lemma \ref{long}, we can exclude the patterns if they satisfy the following conditions:
    \begin{enumerate}
        \item Patterns in which the number of positive elements is not $0$ and $2$.

        \item The number of positive elements in the pattern is $2$, and the first number is less than the second number.

       \item The first positive number is the largest of the absolute values of all the numbers in the pattern.

    \end{enumerate}
In the end, there are only $11$ patterns left.    

For type $D$, when $n$ is odd, based on Table \ref{Elements-in-cell} and Lemma \ref{long}, we can exclude the patterns if they satisfy the following conditions:
  \begin{enumerate}
        \item Patterns in which the number of positive elements is even but except for $\bar{2}\bar{1}\bar{3}$, $21\bar{3}\bar{4}$ and $31\bar{2}\bar{4}$.

        \item Those patterns that start with $4$.

        \item Those patterns that end with $1$ or $2$.

        \item The number of positive elements in the pattern is $1$, and exactly two or three negative elements to its left.

    \end{enumerate}
In the end, there are only $10$ patterns left.

\end{proof}

\section{The case of $A_{n}$}\label{section-A}

In this section, we give the proof of our Theorem \ref{smcell} and Theorem \ref{AV-min} for type $A$.

We split the proof into several cases.

{\bf Case 1.}
From \S \ref{KL-cells} and Lemma \ref{long}, we have \begin{align*}
    w_0\mathcal{C}_1=\{&(n,n+1,n-1,n-2,\dots,3,2,1),\\
    &(n,n-1,n+1,n-2,n-3,\dots,3,2,1),\cdots,\\
    &(n,n-1,n-2,\dots,3,2,1,n+1)\}.
\end{align*}
Thus for any $w\in w_0\mathcal{C}_1$, we have 
$$P(w)=\small{
	\begin{tikzpicture}[scale=0.9,baseline=-54pt]
	\hobox{0}{0}{1}
	\hobox{1}{0}{n+1}
	\hobox{0}{1}{2}
	\hobox{0}{2}{\vdots}
	\hobox{0}{3}{n}
	\end{tikzpicture}}.$$
By \cite[Thm. 5]{Bchen}, every element in this right cell $w_0\mathcal{C}_1$ is smooth. Thus from Proposition \ref{smoothequal} and Lemma \ref{constant} we have $V(L_w)=\mathcal{V}(w)=\mathcal{V}(w_0s_1)$ for any $w\in w_0\mathcal{C}_1$.

{\bf Case 2.} Similarly by \cite[Thm. 5]{Bchen}, every element $w$ in the right cell $w_0\mathcal{C}_{n}$ is smooth since $$P(w)=\small{
	\begin{tikzpicture}[scale=0.9,baseline=-54pt]
	\hobox{0}{0}{1}
	\hobox{1}{0}{2}
	\hobox{0}{1}{3}
	\hobox{0}{2}{4}
	\hobox{0}{3}{\vdots}
        \hobox{0}{4}{n+1}
	\end{tikzpicture}}.$$
   Thus we have $V(L_w)=\mathcal{V}(w_0s_{n})$ for any $w\in w_0\mathcal{C}_{n}$.

{\bf Case 3.} For any  $1<i<n$ and $w\in w_0\mathcal{C}_{i}$, we have $$P(w)=P(w_0s_is_{i+1}\cdots s_{n})=\tiny{
	\begin{tikzpicture}[scale=1.2,baseline=-165pt]
	\hobox{0}{0}{1}
	\hobox{1}{0}{n-i+2}
	\hobox{0}{1}{2}
	\hobox{0}{2}{\dots}
	\hobox{0}{3}{n-i}
        \hobox{0}{4}{n+1-i}
        \hobox{0}{5}{n-i+3}
        \hobox{0}{6}{n-i+4}
        \hobox{0}{7}{\vdots}
        \hobox{0}{8}{n+1}
	\end{tikzpicture}}.$$
Note that \begin{align*}
    w_0s_is_{i+1}\cdots s_{k}=(&n+1,n,n-1,\cdots,n-i+3,n-i+1,n-i,\cdots,\\
&n-k+1, n-i+2,n-k,n-k-1,\cdots,2,1),
\end{align*} which has the  pattern $4231$ for $i\leq k\leq n-1$.

Note that \begin{align*}
    w_0s_is_{i-1}\cdots s_{k}=(&n+1,n,n-1,\cdots,n-k+3,n-i+1,n-k+2,\\
&n-k+1,\cdots,n-i+2,n-i,n-i-1,\cdots,2,1),
\end{align*} which has the  pattern $4231$ for $2\leq k\leq i$.

Also we have \begin{align*}
    w_0s_is_{i-1}\cdots s_{1}=(&n-i+1,n+1,n,n-1,\cdots,n-i+2,n-i,\\
    &n-i-1,\cdots,2,1),
\end{align*} which avoids the two patterns $4231$ and $3412$. Thus by Proposition \ref{3412}, the Schubert variety $X(w_0s_is_{i-1}\cdots s_{1})$ is smooth.

And $$w_0s_is_{i+1}\cdots s_{n}=(n+1,n,\cdots,n-i+3,n-i+1,n-i,\cdots,2,1,n-i+2),$$ which avoids the two patterns $4231$ and $3412$. Thus by Proposition \ref{3412}, the Schubert variety $X(w_0s_is_{i+1}\cdots s_{n})$ is smooth.

Therefore, from  Lemma \ref{constant} we have $$V(L_w)=V(L_{w_0s_is_{i+1}\cdots s_{n}})=\mathcal{V}(w_0s_is_{i+1}\cdots s_{n})$$ for any $w\in w_0\mathcal{C}_{i}$.

For a right cell $\mathcal{R}$, we use $S(\mathcal{R})$ to denote the smooth elements contained in $\mathcal{R}$.
\begin{cor}
For type $A_{n}$,     we have $S(w_0\mathcal{C}_{1})=w_0\mathcal{C}_{1}$ and $S(w_0\mathcal{C}_{n})=w_0\mathcal{C}_{n}$.  When $2\leq i\leq n-1$, we have $$S(w_0\mathcal{C}_{i})=\{w_0s_is_{i-1}\cdots s_{1},w_0s_is_{i+1}\cdots s_{n}\}.$$
\end{cor}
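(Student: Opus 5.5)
The plan is to read off the corollary from the case analysis just carried out, using the explicit description of $w_0\mathcal{C}_i$ from Lemma \ref{rightcell-ci} and Lemma \ref{long}, together with the Lakshmibai--Sandhya criterion (Proposition \ref{3412}). By Lemma \ref{rightcell-ci}, $\mathcal{C}_i$ consists exactly of the prefixes (in weak order) $s_is_{i+1}\cdots s_k$ for $i\le k\le n$ and $s_is_{i-1}\cdots s_k$ for $1\le k\le i$. Hence $w_0\mathcal{C}_i$ is the list of these elements multiplied on the left by $w_0=(n+1,n,\dots,1)$. Their one-line forms are exactly those written in Case 3, and in Cases 1 and 2 for $i=1,n$.

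For $i=1$ and $i=n$ I will invoke \cite[Thm. 5]{Bchen} on the shape of $P(w)$, as in Cases 1 and 2. Alternatively, the same can be checked directly. For $i=1$, $w=(n,n-1,\dots,n-k+1,\,n+1,\,n-k,\dots,1)$ is a decreasing sequence with one larger entry inserted. A $3412$ pattern needs two ascents in disjoint positions, but $w$ has only the single ascent into $n+1$, so it is impossible. A $4231$ pattern needs an entry, other than the maximum of the pattern, that is larger than something to its left. The only such entry is $n+1$, which is the largest entry overall and so cannot play the role of $2$ or $3$. The case $i=n$ is symmetric: the one-line form is $w_0$ with the entry $1$ moved (equivalently, multiply by $w_0$ on both sides, which preserves smoothness via the symmetry $w\mapsto w_0ww_0$ of the Dynkin diagram). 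Therefore $S(w_0\mathcal{C}_1)=w_0\mathcal{C}_1$ and $S(w_0\mathcal{C}_n)=w_0\mathcal{C}_n$.

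For $2\le i\le n-1$, I will use the displayed one-line forms from Case 3.

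For $w_0s_is_{i+1}\cdots s_k$ with $i\le k\le n-1$, I exhibit an explicit $4231$ occurrence: the entries $n+1$, then $n-i+1$, then $n-i+2$ (displaced to the right), then $1$. The values satisfy $1<n-i+1<n-i+2<n+1$, and the order in positions is correct because $k\le n-1$ guarantees that $1$ still lies to the right of $n-i+2$.

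For $w_0s_is_{i-1}\cdots s_k$ with $2\le k\le i$, I use the entries $n+1$ (available since $k\ge 2$ leaves the first entry $n+1$ in place), then $n-i+1$, then $n-i+2$, then $1$. This again gives a $4231$ occurrence, and $i\le n-1$ guarantees that $1$ is distinct from the other three entries.

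For the two extreme elements $w_0s_is_{i-1}\cdots s_1$ and $w_0s_is_{i+1}\cdots s_n$, each one-line form is a decreasing sequence with a single entry moved to the front or to the end. The same ascent-counting argument as in the $i=1$ case rules out both $3412$ and $4231$.

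The only delicate step is confirming that the one-line forms quoted in Case 3 are correct. I would verify this by applying the right multiplications $s_j$ (which swap positions $j$ and $j+1$) successively to $w_0$. Once that is done, Proposition \ref{3412} gives the stated $S(w_0\mathcal{C}_i)$.
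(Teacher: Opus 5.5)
Your proposal is correct and follows essentially the same route as the paper: the explicit one-line forms of the elements of $w_0\mathcal{C}_i$, \cite[Thm. 5]{Bchen} for $i=1,n$, explicit $4231$ occurrences for the non-extreme elements when $2\le i\le n-1$, and the Lakshmibai--Sandhya criterion (Proposition \ref{3412}) for the two extreme elements. Your direct ascent-counting check for $i=1,n$ and the explicitly named occurrence $(n+1,\,n-i+1,\,n-i+2,\,1)$ are valid; they only spell out what the paper asserts without detail.
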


\begin{cor}
   For type $A_{n}$ and $w\in w_0\mathcal{C}$, the associated variety of $L_w$ will be irreducible. We have $V(L_w)=\mathcal{V}(w_0s_is_{i+1}\cdots s_{n})=\mathcal{V}(w_0s_is_{i-1}\cdots s_{1})$ if  $w\in w_0\mathcal{C}_{i}$.
\end{cor}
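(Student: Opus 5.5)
The statement to prove is the last corollary of Section \ref{section-A}. It says that for type $A_n$ and $w\in w_0\mathcal{C}$, the variety $V(L_w)$ is irreducible, and that it equals both $\mathcal{V}(w_0s_is_{i+1}\cdots s_n)$ and $\mathcal{V}(w_0s_is_{i-1}\cdots s_1)$ when $w\in w_0\mathcal{C}_i$. The plan is to reduce everything to the smoothness results of the preceding corollary, using Proposition \ref{smoothequal}, Lemma \ref{constant} and Proposition \ref{geometriccell}.

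First we fix the setup. By Lemma \ref{hklem2}, $w_0\mathcal{C}=\mathcal{C}w_0$ is a two-sided cell. Every $w$ in it lies in some $w_0\mathcal{C}_i$. Each $w_0\mathcal{C}_i$ is a KL right cell, since left multiplication by $w_0$ sends right cells to right cells. By Lemma \ref{constant}, $V(L_w)$ depends only on the right cell $w_0\mathcal{C}_i$. So it suffices to exhibit one smooth element in each $w_0\mathcal{C}_i$.

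Next, for $2\le i\le n-1$ we apply the preceding corollary. It gives that $w_0s_is_{i+1}\cdots s_n$ and $w_0s_is_{i-1}\cdots s_1$ are smooth, which was checked by pattern avoidance via Proposition \ref{3412}. Proposition \ref{smoothequal} then yields
\[
V(L_w)=V(L_{w_0s_is_{i+1}\cdots s_n})=\mathcal{V}(w_0s_is_{i+1}\cdots s_n),
\]
and, applying the same argument to the other smooth element, $V(L_w)=\mathcal{V}(w_0s_is_{i-1}\cdots s_1)$. In particular $V(L_w)$ is a single orbital variety, hence irreducible, and the two orbital varieties coincide.

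For $i=1$ and $i=n$ every element of the cell is smooth. Here $s_is_{i-1}\cdots s_1$ and $s_is_{i+1}\cdots s_n$ both lie in $\mathcal{C}_i$ when read with the obvious conventions. For instance, when $i=1$ the first word is just $s_1$, and $s_1s_2\cdots s_n$ is the maximal element. The same argument therefore gives the stated equalities.

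As an independent consistency check on the equality of the two orbital varieties, we can use Proposition \ref{geometriccell}. In type $A$, $\mathcal{V}(x)=\mathcal{V}(y)$ holds if and only if $x\sim_R y$. Both elements lie in the same right cell $w_0\mathcal{C}_i$; equivalently, they have the same $P$-tableau, as computed in Case 3.

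The only potential obstacle is confirming that $w_0\mathcal{C}_i$ is genuinely a right cell containing both listed elements, so that Lemma \ref{constant} applies. This follows from Lemma \ref{rightcell-ci} together with the standard fact that $x\mapsto w_0x$ preserves $\sim_R$ up to reversal of the preorder. Alternatively, the $P$-tableau computation of Case 3 verifies it directly.
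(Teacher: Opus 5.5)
Your proposal is correct and follows essentially the same route as the paper: it uses the smoothness of all of $w_0\mathcal{C}_1$ and $w_0\mathcal{C}_n$ (via Bai--Chen), and of both $w_0s_is_{i-1}\cdots s_1$ and $w_0s_is_{i+1}\cdots s_n$ for $2\le i\le n-1$ (via $3412$/$4231$ avoidance), combined with Lemma~\ref{constant} and Proposition~\ref{smoothequal}. Applying Proposition~\ref{smoothequal} to both smooth representatives of the same right cell, as you do, is exactly what yields the second equality, which the paper's Case 3 leaves implicit.
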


From Proposition \ref{geometriccell}, for type $A_n$, we have $V(L_w)=\mathcal{V}(w_0s_i)=\mathcal{V}(w_{\rm max})$ if $w\in w_0\mathcal{C}_i$.

\section{The case of $C_n$ and $B_n$}\label{typebc}

In this section, we give the proof of our Theorem \ref{smcell} and Theorem \ref{AV-min} for types $B$ and $C$.

First we consider the case of type $C_n$.

\subsection{The case of $C_n$}We split the proof into several cases.

{\bf Case 1.} From Table \ref{Elements-in-cell} and Lemma \ref{long}, we have 
\begin{align*}
    w_0\mathcal{C}_1=\{&(-1,-2,\cdots,2-n,-n,1-n),\\
    &(-1,-2,\cdots,3-n,-n,2-n,1-n),\\
	&\vdots\\
    &(-n,-1,-2,\cdots,1-n),\\
	&(n,-1,-2,\cdots,1-n),\\
	&(-1,n,-2,\cdots,1-n),\\
	&\vdots\\
	&(-1,-2,\cdots,1-n,n)\}.
\end{align*}



Next we prove that every element in this KL right cell $w_0\mathcal{C}_1$ is smooth. We use $[n, m]$ to denote the set $\{n, n + 1, ..., m\}$ for $n < m$. We use
“$k$” to denote the $k$-th largest element in a pattern and “$\bar{k}$” the opposite number of this element.

For $w_0\mathcal{C}_1$, we have 
\begin{align*}
    w_0s_1s_2\cdots s_k=(&-1,-2,\cdots,-n+k+1,-n,-n+k,\\&-n+k-1,\cdots,1-n)\text{~for~} 1\leq k\leq n-1
\end{align*}
 and 
 \begin{align*}
     w_0s_1s_2\cdots s_ns_{n-1}\cdots s_{n-j+1}=(&-1,-2,\cdots,1-j,n,-j,\\&-j-1,\cdots,1-n)\text{~for~}1\leq j\leq n.
 \end{align*}
 
 For the permutation $(-1,-2,\cdots,-n+k+1,-n,-n+k,-n+k-1,\cdots,1-n)$, we assert that the patterns $\bar{2}\bar{1}\bar{3}$, $\bar{3}\bar{2}\bar{1}$ and $\bar{3}\bar{4}\bar{1}\bar{2}$ can not appear. Note that the subsequence $(-1,-2,\cdots,-n+k+1,-n+k,-n+k-1,\cdots,1-n)$ is decreasing, then the pattern $\bar{2}\bar{1}\bar{3}$ and $\bar{3}\bar{2}\bar{1}$ can not appear. For the pattern $\bar{3}\bar{4}\bar{1}\bar{2}$, the element “$\bar{3}$” 
must be chosen from $[-n,-1]$ (since “$\bar{1}$” and “$\bar{2}$” are fixed at $[1-n,k-n]$). The relative order of “$\bar{3}$” and “$\bar{1}$” makes the pattern $\bar{3}\bar{4}\bar{1}\bar{2}$  impossible. 

For the permutation $(-1,-2,\cdots,1-j,n,-j,-j-1,\cdots,1-n)$, the discussion is similar and next we assert that the patterns $\bar{1}2\bar{3}$, $\bar{2}1\bar{3}$, $2\bar{1}\bar{3}$, $2\bar{3}\bar{1}$, $\bar{3}\bar{2}1$, $\bar{3}2\bar{1}$, $3\bar{2}\bar{1}$, $1\bar{2}$ can not appear. Note that the positive part of these eight patterns is fixed by $n$ and  the subsequence $(-1,-2,\cdots,1-j,-j,-j-1,\cdots,1-n)$ is decreasing, then the patterns $2\bar{3}\bar{1}$, $\bar{3}\bar{2}1$, $\bar{3}2\bar{1}$, $3\bar{2}\bar{1}$ can not appear. The remaining four patterns $\bar{1}2\bar{3}$, $\bar{2}1\bar{3}$, $2\bar{1}\bar{3}$ and $1\bar{2}$ can not appear since “1” is fixed by $n$, but there is no element larger than $n$ in this permutation. 

Therefore by Theorem \ref{pattern-avoid},  the Schubert variety $X(w)$  is smooth for each $w\in w_0\mathcal{C}_1$.

{\bf Case 2.} Similarly in the KL right cell $w_0\mathcal{C}_n$, we have 
$$w_0s_ns_{n-1}\cdots s_1=(-2,\cdots,-n,1),$$
which avoids the  $11$ patterns listed in Theorem \ref{pattern-avoid}, thus the Schubert variety $X(w_0s_ns_{n-1}\cdots s_1)$ is smooth.

Also, we have
$$
w_0s_ns_{n-1}\cdots s_k=(-2,\cdots,-(n-k+1),1,-(n-k+2),\cdots,-n),
$$
which has the pattern $1\bar{2}$ for $1\leq k\leq n-1$ and so does $ w_0s_ns_{n-1}s_n=(2,1,-3,-4,\cdots,-n)$.

{\bf Case 3.} For any $1<i<n$ and $w\in w_0\mathcal{C}_i$, we have the following.
\begin{enumerate}
    \item  For $w=w_0s_is_{i-1}\cdots s_{i-k+1}$,
    \begin{enumerate}
        \item When $1\leq k\leq i-1$, we have
\begin{align*}
&w_0s_is_{i-1}\cdots s_{i-k+1}\\
=&(-1,-2,\cdots,-(n-i-1),-(n-i+1),\cdots,\\&-(n-i+k),-(n-i),-(n-i+k+1),\cdots,-n),
\end{align*}
which has the pattern $\bar{2}\bar{1}\bar{3}$.

\item  When $k=i$, we have
\begin{align*}
&w_0s_is_{i-1}\cdots s_{1}\\
=&(-1,-2,\cdots,-(n-i-1),-(n-i+1),\cdots,-n,-(n-i)),
\end{align*}
which avoids the $11$ patterns listed in Theorem \ref{pattern-avoid}. Thus the Schubert variety $X(w_0s_is_{i-1}\cdots s_1)$ is smooth.
    \end{enumerate}

\item  For $w=w_0s_is_{i+1}\cdots s_k$ with $i\leq k\leq n-1$, we have
\begin{align*}
&w_0s_is_{i+1}\cdots s_k\\
=&(-1,-2,\cdots,-(n-k-1),-(n-i+1),-(n-k),\\
&-(n-k+1),\cdots,-(n-i),-(n-i+2),\cdots,-n),
\end{align*}
which has the pattern $\bar{2}\bar{1}\bar{3}$.

\item For $w=w_0s_is_{i+1}\cdots s_ns_{n-1}\cdots s_{n-k+1}$,
\begin{enumerate}
    \item When $1\leq k\leq n-1$, we have
\begin{align*}
&w_0s_is_{i+1}\cdots s_ns_{n-1}\cdots s_{n-k+1}\\
=&(-1,-2,\cdots,-(k-1),n-i+1,-k,\\
&-(k+1),\cdots,-(n-i),-(n-i+2),\cdots,-n),
\end{align*}
which has the pattern $1\bar{2}$.

\item When $k=n$,  we have
\begin{align*}
&w_0s_is_{i+1}\cdots s_ns_{n-1}\cdots s_{1}\\
=&(-1,-2,\cdots,-(n-i),-(n-i+2),\cdots,-n,n-i+1),
\end{align*}
which avoids the $11$ patterns listed in Theorem \ref{pattern-avoid}. Thus the Schubert variety $X(w_0s_is_{i+1}\cdots s_ns_{n-1}\cdots s_{1})$ is smooth.
    
\end{enumerate}

\end{enumerate}





\begin{cor}
    For type $C_n$, we have $S(w_0\mathcal{C}_1)=w_0\mathcal{C}_1$ and  $S(w_0\mathcal{C}_n)=\{w_0s_ns_{n-1}\cdots s_1\}$.  When $1<i< n$, we have 
    $$S(w_0\mathcal{C}_i)=\{w_0s_is_{i-1}\cdots s_1, w_0s_is_{i+1}\cdots s_ns_{n-1}\cdots s_{1}\}.$$
\end{cor}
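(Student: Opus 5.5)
The corollary simply collects the case analysis just carried out for type $C_n$, so the plan is to organize that analysis cell by cell. For each $i$ we use Lemma \ref{rightcell-ci} to list the elements of $\mathcal{C}_i$ as the weak-order intervals below $s_is_{i-1}\cdots s_1$ and below $s_is_{i+1}\cdots s_ns_{n-1}\cdots s_1$ (and, for $i=n$, below $s_ns_{n-1}\cdots s_1$ and $s_ns_{n-1}s_n$). We then translate each $w_0x$ into signed one-line notation, using $w_0=(-1,-2,\dots,-n)$ from Lemma \ref{long} together with Table \ref{Elements-in-cell}. Smoothness of each element is tested against the eleven type $C$ patterns of Theorem \ref{pattern-avoid}.

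For $\mathcal{C}_1$, every element of $w_0\mathcal{C}_1$ is of one of two forms:
\begin{itemize}
\item[(a)] a decreasing negative sequence with one entry $-n$ moved forward;
\item[(b)] a decreasing negative sequence with the single positive entry $n$ inserted.
\end{itemize}
For (a), any pattern with a positive entry is excluded outright. Among the all-negative patterns, $\bar{2}\bar{1}\bar{3}$ and $\bar{3}\bar{2}\bar{1}$ would need an increase inside the decreasing tail. The pattern $\bar{3}\bar{4}\bar{1}\bar{2}$ is ruled out by the relative order argument. For (b), the only positive entry is $n$, which has the largest absolute value. This kills every pattern whose positive entry is not of maximal absolute value, namely $\bar{1}2\bar{3}$, $\bar{2}1\bar{3}$, $2\bar{1}\bar{3}$ and $1\bar{2}$. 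The remaining patterns again need an increase among the negatives. Hence $S(w_0\mathcal{C}_1)=w_0\mathcal{C}_1$.

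For $\mathcal{C}_n$, we check directly that $(-2,\dots,-n,1)$ avoids all eleven patterns. Every other element, $w_0s_n\cdots s_k$ with $k\ge 2$ and $w_0s_ns_{n-1}s_n=(2,1,-3,\dots,-n)$, has a positive entry followed by a negative entry of larger absolute value. That is the pattern $1\bar{2}$, so these elements are not smooth. For $1<i<n$, the proper prefixes $w_0s_i\cdots s_{i-k+1}$ with $k<i$ and $w_0s_i\cdots s_k$ with $k\le n-1$ each contain $\bar{2}\bar{1}\bar{3}$. The prefixes $w_0s_i\cdots s_n\cdots s_{n-k+1}$ with $k<n$ contain $1\bar{2}$, since the positive entry $n-i+1$ is followed by $-n$. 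The two endpoints $(-1,\dots,-(n-i-1),-(n-i+1),\dots,-n,-(n-i))$ and $(-1,\dots,-(n-i),-(n-i+2),\dots,-n,n-i+1)$ are checked directly to avoid all eleven patterns. Combining these three cases with Theorem \ref{pattern-avoid} gives the three stated sets.

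The main obstacle is the verification that the two endpoint elements in the case $1<i<n$ avoid every listed pattern. Here one must argue uniformly in $i$ that the single displaced entry, placed last, can serve neither as the small-absolute-value negative entry in $\bar{2}\bar{1}\bar{3}$-type patterns nor in $\bar{3}\bar{4}\bar{1}\bar{2}$. I would handle this by noting that every pattern on the list ends with an entry whose role is incompatible with the last entry's absolute value relative to the decreasing prefix, checking each pattern in turn.
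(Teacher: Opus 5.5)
Your proposal is correct and follows essentially the same route as the paper: you list $w_0\mathcal{C}_i$ in signed one-line notation, exhibit $\bar{2}\bar{1}\bar{3}$ or $1\bar{2}$ in each non-smooth element, and check the remaining elements directly against the eleven type $C$ patterns of Theorem \ref{pattern-avoid}, even splitting the $\mathcal{C}_1$ patterns into the same two groups as the paper. There are two harmless slips: in case (b) the patterns whose positive entry is not of maximal absolute value also include $2\bar{3}\bar{1}$, $\bar{3}\bar{2}1$ and $\bar{3}2\bar{1}$, though your ``increase among the negatives'' argument disposes of them anyway; and, exactly as in the paper, the claim that $(2,1,-3,\dots,-n)$ contains $1\bar{2}$ tacitly needs $n\geq 3$ --- for $n=2$ one gets $w_0s_2s_1s_2=(2,1)=s_1$, which is smooth.
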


\begin{cor}
    When $W=W(C_n)$ and $w\in w_0\mathcal{C}$, the associated variety of $L_w$ will be irreducible. We have $V(L_w)=\mathcal{V}(w_0s_is_{i+1}\cdots s_ns_{n-1}\cdots s_{1})$ if $w\in w_0\mathcal{C}_i$.
\end{cor}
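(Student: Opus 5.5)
The statement to prove is the last corollary: for $W=W(C_n)$ and $w\in w_0\mathcal{C}$, the variety $V(L_w)$ is irreducible and equals $\mathcal{V}(w_0s_is_{i+1}\cdots s_ns_{n-1}\cdots s_1)$ whenever $w\in w_0\mathcal{C}_i$. The plan is to combine the smoothness computations of Cases 1--3 with Proposition \ref{smoothequal} and Lemma \ref{constant}.

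First, note that $w_0\mathcal{C}_i$ is a single KL right cell. Left multiplication by $w_0$ sends right cells to right cells, since $x\leq_R y$ iff $w_0y\leq_R w_0x$. By Lemma \ref{hklem2}(3) these sets exhaust $w_0\mathcal{C}=\mathcal{C}w_0$. Consequently every $w\in w_0\mathcal{C}$ lies in some $w_0\mathcal{C}_i$ with $1\le i\le n$. By Lemma \ref{constant}, $V(L_w)=V(L_y)$ for every $y$ in that cell.

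Next, in each cell I pick the smooth representative $y_i=w_0s_is_{i+1}\cdots s_ns_{n-1}\cdots s_1$. For $1<i<n$ this is Case 3(3)(b). For $i=1$ it is the element $w_0s_1\cdots s_n s_{n-1}\cdots s_1$ ($j=n$ in Case 1), shown smooth there. For $i=n$ the word reads $w_0s_ns_{n-1}\cdots s_1$, which is smooth by Case 2. Each $y_i$ lies in $w_0\mathcal{C}_i$ by Lemma \ref{rightcell-ci}(2): the word $s_is_{i+1}\cdots s_n s_{n-1}\cdots s_1$ is the maximal chain listed there for $i<n$, and $s_ns_{n-1}\cdots s_1$ for $i=n$. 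Proposition \ref{smoothequal} then gives $V(L_{y_i})=\mathcal{V}(y_i)$, a single orbital variety. Hence $V(L_w)=\mathcal{V}(y_i)$ is irreducible.

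The only point requiring care is bookkeeping: the uniform formula must specialize correctly at the endpoints $i=1$ and $i=n$, and $y_i$ must actually lie in the cell $w_0\mathcal{C}_i$ rather than merely in $w_0\mathcal{C}$. Both follow by direct inspection of Lemma \ref{rightcell-ci} and the one-line forms computed above, so I expect no genuine obstacle.
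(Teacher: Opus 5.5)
Your proposal is correct and follows the paper's argument, which the paper leaves implicit: the corollary is read off from the smoothness of $w_0s_is_{i+1}\cdots s_ns_{n-1}\cdots s_1$ in each cell (Cases 1--3), combined with Proposition \ref{smoothequal} and the constancy of $V(L_w)$ on right cells (Lemma \ref{constant}). Your extra bookkeeping is sound and only spells out what the paper takes for granted: left multiplication by $w_0$ permutes right cells, and $w_0\mathcal{C}$ is the union of the $w_0\mathcal{C}_i$. Note that this union follows from $\mathcal{C}=\bigcup_i\mathcal{C}_i$, not from Lemma \ref{hklem2}(3).
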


Note that $w_0s_is_{i+1}\cdots s_ns_{n-1}\cdots s_{1}$ is the minimal length element $w_{\rm min} $
 of the KL right cell $w_0\mathcal{C}_i$.
 
\subsection{The case of $B_n$}
Now we consider the case of type $B_n$. The result for type $B_n$ can be obtained by adapting the argument for type $C_n$, since they share the same Weyl group. From \cite[Thm. 4.2 and Thm. 6.1]{Bi-98} we divide the discussion into the following three cases:
\begin{enumerate}
    \item[(i)] If $X(w)$ is not smooth in type $C_n$ and $w$ contains one of the patterns listed in \cite[Thm. 4.2]{Bi-98}, then $X(w)$ is not smooth in type $B_n$;

    \item[(ii)] If $X(w)$ is not smooth in type $C_n$ and $w$ contains only the pattern $1\bar{2}$, then to determine whether $X(w)$ is smooth in type $B_n$, it suffices to check whether $w$ avoids the pattern $\bar{2}\bar{1}$;

    \item[(iii)] If $X(w)$ is smooth in type $C_n$, which means that $w$ avoids the patterns listed in \cite[Thm. 4.2 and Thm. 6.1]{Bi-98}, then it again suffices to check whether $w$ avoids the pattern $\bar{2}\bar{1}$.
\end{enumerate}


{\bf Case 1.} For the KL right cell $ w_0\mathcal{C}_1$, we have $S( w_0\mathcal{C}_1)= w_0\mathcal{C}_1$ in the case of $C_n$, then we only need to check the pattern $\bar{2}\bar{1}$ in the case of type $B_n$.

For $w\in w_0\mathcal{C}_1$, we have 
\begin{align*}
    w_0s_1s_2\cdots s_k=(&-1,-2,\cdots,-n+k+1,-n,-n+k,\\&-n+k-1,\cdots,1-n)\text{~for~} 1\leq k\leq n-1
\end{align*}
 and 
 \begin{align*}
     w_0s_1s_2\cdots s_ns_{n-1}\cdots s_{n-j+1}=(&-1,-2,\cdots,1-j,n,-j,\\&-j-1,\cdots,1-n)\text{~for~}1\leq j\leq n.
 \end{align*}

For the permutation of $(-1,-2,\cdots,-n+k+1,-n,-n+k,-n+k-1,\cdots,1-n)$, it has the pattern $\bar{2}\bar{1}$ for $1\leq k \leq n-1$. For the permutation $(-1,-2,\cdots,1-j,n,-j,-j-1,\cdots,1-n)$, it avoids the pattern $\bar{2}\bar{1}$ for $1\leq j\leq n$ since the subsequence $(-1,-2,\cdots,1-j,-j,-j-1,\cdots,1-n)$ is decreasing, the relative order of “$\bar{2}$” and “$\bar{1}$” makes the pattern $\bar{2}\bar{1}$ impossible. 

{\bf Case 2.} Similarly for the KL right cell $w_0\mathcal{C}_n$, we have $S(w_0\mathcal{C}_n)=\{w_0s_ns_{n-1}\cdots s_1\}$ in the case of $C_n$. Note that 
$$w_0s_ns_{n-1}\cdots s_1=(-2,\cdots,-n,1),$$
which avoids the pattern $\bar{2}\bar{1}$, thus by Theorem \ref{pattern-avoid}  the Schubert variety $X(w_0s_ns_{n-1}\cdots s_1)$ is smooth.  \\
We also have
\begin{align*}
w_0s_ns_{n-1}\cdots s_k=(-2,\cdots,-(n-k+1),1,-(n-k+2),\cdots,-n),
\end{align*}
which has the pattern $\bar{2}1\bar{3}$ for $2\leq k\leq n-1$, and  
$$w_0s_ns_{n-1}s_n=(2,1,-3,-4,\cdots,-n)$$
has the pattern $1\bar{2}\bar{3}$.

{\bf Case 3.} For any $1<i<n$ and $w\in w_0\mathcal{C}_i$, we have the following.
\begin{enumerate}
    \item For $w=w_0s_is_{i-1}\cdots s_{i-k+1}$, when $1\leq k\leq i$ we have
\begin{align*}
&w_0s_is_{i-1}\cdots s_{i-k+1}\\
=&(-1,-2,\cdots,-(n-i-1),-(n-i+1),\cdots,-(n-i+k),\\
&-(n-i),-(n-i+k+1),\cdots,-n),
\end{align*}
which has the pattern $\bar{2}\bar{1}$.

\item For $w=w_0s_is_{i+1}\cdots s_k$, when $i\leq k\leq n-1$ we have
\begin{align*}
&w_0s_is_{i+1}\cdots s_k\\
=&(-1,-2,\cdots,-(n-k-1),-(n-i+1),-(n-k),\\&-(n-k+1),\cdots,-(n-i),-(n-i+2),\cdots,-n),
\end{align*}
which has the pattern $\bar{2}\bar{1}$.

\item For $w=w_0s_is_{i+1}\cdots s_ns_{n-1}\cdots s_{n-k+1}$,
\begin{enumerate}
    \item When $k=1$, we have
    \begin{align*}
w_0s_is_{i+1}\cdots s_n=(n-i+1,-1,-2,\cdots,-(n-i),-(n-i+2),\cdots,-n),
\end{align*}
which has the pattern  $2\bar{1}\bar{3}$.

\item When $2\leq k\leq n-1$, we have
\begin{align*}
&w_0s_is_{i+1}\cdots s_ns_{n-1}\cdots s_{n-k+1}\\
=&(-1,-2,\cdots,-(k-1),n-i+1,-k,\\&-(k+1),\cdots,-(n-i),-(n-i+2),\cdots,-n),
\end{align*}
which has the pattern $\bar{1}2\bar{3}$.

\item When $k=n$, we have
\begin{align*}
&w_0s_is_{i+1}\cdots s_ns_{n-1}\cdots s_1\\
=&(-1,-2,\cdots,-(n-i),-(n-i+2),\cdots,-n,n-i+1),
\end{align*}
which avoids the pattern $\bar{2}\bar{1}$, thus by Theorem \ref{pattern-avoid} the Schubert variety $X(w_0s_is_{i+1}\cdots s_ns_{n-1}\cdots s_{n-i+1})$ is smooth.
    
\end{enumerate}

\end{enumerate}





\begin{cor}
    For type $B_n$, we have $$S(w_0\mathcal{C}_1)=\{w_0s_1\cdots s_ns_{n-1}\cdots s_{n-j+1}\mid 1\leq j\leq n\}$$ and  $$S(w_0\mathcal{C}_n)=\{w_0s_ns_{n-1}\cdots s_1\}.$$   When $1<i< n$, we have 
    $$S(w_0\mathcal{C}_i)=\{w_0s_is_{i+1}\cdots s_ns_{n-1}\cdots s_1\}.$$
\end{cor}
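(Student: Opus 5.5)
The corollary collects the type $B_n$ case analysis just carried out. I will organize the proof around the reduction (i)--(iii) derived from \cite[Thm. 4.2 and Thm. 6.1]{Bi-98}. Type $B_n$ and type $C_n$ share the Weyl group, and the one-line (signed permutation) forms of the elements of $w_0\mathcal{C}_i$ from Table \ref{Elements-in-cell} and Lemma \ref{long} are identical. So the $C_n$ computations can be reused verbatim.

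The first step is to show that every element that is non-smooth in type $C_n$ because of a genuine pattern of \cite[Thm. 4.2]{Bi-98} remains non-smooth in type $B_n$. This covers, for example, the elements $w_0s_is_{i-1}\cdots s_{i-k+1}$ with $k<i$ and $w_0s_is_{i+1}\cdots s_k$, which contain $\bar{2}\bar{1}\bar{3}$. For these I would still exhibit a $B_n$ pattern from Theorem \ref{pattern-avoid}(1) directly, namely $\bar{2}\bar{1}$, $\bar{2}1\bar{3}$, $1\bar{2}\bar{3}$, $2\bar{1}\bar{3}$ or $\bar{1}2\bar{3}$. The occurrence is read off from the explicit signed forms: a negative entry of larger absolute value preceding one of smaller absolute value gives $\bar{2}\bar{1}$, and the isolated positive entry with the surrounding negatives gives the three-letter patterns. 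This treats Case 1 with $k\le n-1$, Case 2 with $k\ge 2$ and the element $w_0s_ns_{n-1}s_n$, and Case 3(1), 3(2), 3(3a), 3(3b).

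The second step handles the remaining elements. These are the $C_n$-smooth ones, or those failing in $C_n$ only via $1\bar{2}$. By (ii)--(iii) it suffices to check that they avoid $\bar{2}\bar{1}$. They are $w_0s_1\cdots s_ns_{n-1}\cdots s_{n-j+1}$, $w_0s_ns_{n-1}\cdots s_1$ and $w_0s_is_{i+1}\cdots s_ns_{n-1}\cdots s_1$. In each of them the negative entries, in the order they appear, have strictly increasing absolute value ($-1,-2,\dots$). Hence no negative pair forms $\bar{2}\bar{1}$, and Theorem \ref{pattern-avoid} gives smoothness.

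The main obstacle is justifying reduction (ii)/(iii), namely that passing from $C_n$ to $B_n$ only adds the pattern $\bar{2}\bar{1}$ and removes $1\bar{2}$. To settle this I would compare the lists in \cite[Thm. 4.2, Thm. 6.1]{Bi-98} restricted to patterns that can occur in $w_0\mathcal{C}$, as was done in the proof of Theorem \ref{pattern-avoid}. Once that is in place, the result follows by collecting the three cases.
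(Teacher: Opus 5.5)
Your proposal is correct and is essentially the paper's own argument. You reuse the type $C_n$ one-line forms and exhibit an explicit $B_n$ pattern from Theorem~\ref{pattern-avoid}(1) for every non-smooth element; this includes $w_0s_1\cdots s_k$ and $w_0s_is_{i-1}\cdots s_1$, which are smooth in type $C_n$ but contain $\bar{2}\bar{1}$. You then certify the three surviving families, all already smooth in type $C_n$, via reduction (iii) by noting that their negative entries occur with increasing absolute value.

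Only (iii) is actually used, so you do not need to justify (ii). You should also not try to justify it with the shortened lists of Theorem~\ref{pattern-avoid}. There, $w_0s_n=(1,-2,\dots,-n)$ has $1\bar{2}$ as its only pattern from the $C_n$ list and avoids $\bar{2}\bar{1}$, yet for $n\ge 3$ it contains the $B_n$ pattern $1\bar{2}\bar{3}$ and is not smooth.
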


\begin{cor}
    When $W=W(B_n)$ and $w\in w_0\mathcal{C}$, the associated variety of $L_w$ will be irreducible. We have 
$V(L_w)=\mathcal{V}(w_0s_is_{i+1}\cdots s_ns_{n-1}\cdots s_1)$ if $w\in w_0\mathcal{C}_i$.
\end{cor}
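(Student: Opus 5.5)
The corollary follows from the three facts already established in the Case 1--3 analysis for type $B_n$: each right cell $w_0\mathcal{C}_i$ contains a smooth element, $V(L_w)$ is constant on right cells, and smoothness of $X(y)$ forces $V(L_y)=\mathcal{V}(y)$.

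First, I fix $w\in w_0\mathcal{C}$. By Lemma \ref{hklem2}(3) we have $w_0\mathcal{C}=\mathcal{C}w_0$, and the right cells making up $\mathcal{C}$ are the $\mathcal{C}_i$. Left multiplication by $w_0$ carries right cells to right cells, so $w_0\mathcal{C}$ is the disjoint union of the right cells $w_0\mathcal{C}_i$, $1\le i\le n$. Hence $w\in w_0\mathcal{C}_i$ for a unique $i$.

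Next, I set $y_i:=w_0s_is_{i+1}\cdots s_ns_{n-1}\cdots s_1$, read with the conventions $y_1=w_0s_1\cdots s_n s_{n-1}\cdots s_1$ (the case $j=n$ of Case 1) and $y_n=w_0s_ns_{n-1}\cdots s_1$. The type $B_n$ corollary on $S(w_0\mathcal{C}_i)$ shows $y_i\in S(w_0\mathcal{C}_i)$ in every case:
\begin{itemize}
\item for $i=1$, $y_1$ is the $j=n$ member of the listed family;
\item for $1<i<n$, $y_i$ is the single listed element;
\item for $i=n$, $y_n$ is exactly the listed element.
\end{itemize}
So $X(y_i)$ is smooth, and Proposition \ref{smoothequal} gives $V(L_{y_i})=\mathcal{V}(y_i)$.

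Since $w\sim_R y_i$, Lemma \ref{constant} then yields $V(L_w)=V(L_{y_i})=\mathcal{V}(y_i)$. This is a single orbital variety, so $V(L_w)$ is irreducible.

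The only point needing care is the bookkeeping at the ends. For $i=1$ and $i=n$ the formula $w_0s_is_{i+1}\cdots s_ns_{n-1}\cdots s_1$ must be read correctly, and one must check that this particular word is the one shown smooth in Cases 1 and 2 and that it lies in the cell. For $i=n$ the word degenerates to $w_0s_ns_{n-1}\cdots s_1$, which Case 2 verified avoids $\bar2\bar1$. For $i=1$ it is $(-1,\dots,1-n,n)$, which Case 1 showed avoids the type $B$ patterns. The argument uses no further computation.
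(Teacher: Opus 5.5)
Your proposal is correct and follows the paper's own argument: the type $B_n$ Cases 1--3 show that $w_0s_is_{i+1}\cdots s_ns_{n-1}\cdots s_1$ is a smooth element of each right cell $w_0\mathcal{C}_i$, and then Proposition \ref{smoothequal} together with Lemma \ref{constant} gives $V(L_w)=\mathcal{V}(w_0s_is_{i+1}\cdots s_ns_{n-1}\cdots s_1)$, a single orbital variety. Your reading of the degenerate words at $i=1$ (the $j=n$ member of Case 1) and $i=n$ (Case 2) is correct. In Case 3(3)(c) you correctly take the final letter to be $s_1$, where the paper's text has the typo $s_{n-i+1}$.
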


Note that $w_0s_is_{i+1}\cdots s_ns_{n-1}\cdots s_{1}$ is the minimal length element $w_{\rm min} $
 of the KL right cell $w_0\mathcal{C}_i$.

\section{The case of $D_n$}\label{typed}

In this section, we give the proof of our Theorem \ref{smcell} and Theorem \ref{AV-min} for type $D$.

Before proving our results, we have the following special case.
\begin{prop}\label{non-smooth}
    For type $D_n$ $(n\geq 4)$, all elements of the following KL right cells are non-smooth: $\{w_0\mathcal{C}_i\mid 2\leq i \leq n-2\}$.
\end{prop}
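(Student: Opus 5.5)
The plan is to show that every element of $w_0\mathcal{C}_i$, $2\leq i\leq n-2$, contains a pattern forbidden by Theorem \ref{pattern-avoid}(3), or by Theorem \ref{BP-patterns} via a $D_4$ or $A_3$ subsystem. By Lemma \ref{rightcell-ci}(3), the cell $\mathcal{C}_i$ consists of the prefixes, in the weak order, of three chains:
\[
s_is_{i+1}\cdots s_{n-2}s_{n-1},\qquad s_is_{i+1}\cdots s_{n-2}s_n,\qquad s_is_{i-1}\cdots s_1 .
\]
So $w_0\mathcal{C}_i$ is an explicit list of $n$ elements, namely $w_0s_i\cdots s_k$ ($i\leq k\leq n-2$), $w_0s_i\cdots s_{n-2}s_{n-1}$, $w_0s_i\cdots s_{n-2}s_n$ and $w_0s_is_{i-1}\cdots s_k$ ($1\leq k\leq i-1$).

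First I will write each of these in signed one-line notation. I start from Lemma \ref{long}: $w_0=(-1,\dots,-n)$ for $n$ even and $(1,-2,\dots,-n)$ for $n$ odd. I then multiply on the right by the simple reflections, using the row for $\mathcal{C}_i$ in Table \ref{Elements-in-cell}. Right multiplication by $s_j$ ($j<n$) permutes positions $n-j$ and $n-j+1$. Right multiplication by $s_n$ swaps the first two positions and negates them. The resulting words are nearly decreasing sequences of negatives, with one entry displaced: either $-(n-i)$ moved right past a block, or $-(n-i+1)$ moved left. In the two chain ends a sign change occurs in the first two positions, together with the parity twist from $w_0$.

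Next, for each family I exhibit an explicit occurrence of a forbidden pattern. For the "moved" families ($w_0s_i\cdots s_k$ and $w_0s_is_{i-1}\cdots s_k$), the displaced entry together with two neighbours should give $\bar{2}\bar{1}\bar{3}$ or $\bar{1}\bar{3}\bar{2}$. For $n$ odd, the leading positive $1$ of $w_0$ should upgrade these to $1\bar{3}\bar{2}$ or $\bar{1}2\bar{3}$ type occurrences. Since $2\leq i\leq n-2$, the displaced entry always has at least one neighbour on each side; this is exactly where $i=1$ and $i\geq n-1$ differ.

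The two long elements $w_0s_i\cdots s_{n-2}s_{n-1}$ and $w_0s_i\cdots s_{n-2}s_n$ are the delicate ones, because there the displaced entry reaches the first positions and the sign flip of $s_n$ interacts with it. For these I will look for four-term patterns from the list, such as $21\bar{3}\bar{4}$, $31\bar{2}\bar{4}$ or $\bar{2}\bar{4}31$, choosing the two entries in positions $1,2$ together with the displaced entry and a later negative. The hypothesis $i\leq n-2$ supplies a negative entry of smaller absolute value to the right.

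As a fallback, if the one-line bookkeeping becomes error-prone, I will argue root-theoretically with Theorem \ref{BP-main}. I will use the $D_4$ subsystem spanned by $\alpha_{i-1},\alpha_i,\alpha_{i+1}$ and the branch, or an $A_3$ subsystem containing $\alpha_i$. I then compute the flattening $f_\Delta(w)$ through $\Phi_w\cap\Delta$, noting that $\Phi_{w_0x}=\Phi^+\setminus x\Phi_{x^{-1}}$-type complements are easy to describe for short $x$. The aim is to show that it equals $s_2s_1s_3s_4s_2$ or $s_2s_1s_3s_2$.

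The main obstacle I expect is the uniform sign and parity bookkeeping in the two elements ending in $s_{n-1}$ or $s_n$, together with the small case $n=4$, $i=2$. For $n=4$, $i=2$ I would simply verify the four elements directly against the $D_4$ pattern $s_2s_1s_3s_4s_2$.
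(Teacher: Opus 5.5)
Your overall route is the paper's: write the $n$ elements of $w_0\mathcal{C}_i$ in signed one-line notation and exhibit a pattern from Theorem~\ref{pattern-avoid}(3), separately for $n$ even and $n$ odd. Your description of the right action of $s_j$ and $s_n$, and of the displaced entries $-(n-i)$ and $-(n-i+1)$, is correct. For the $n-2$ ``moved'' elements your prediction is right, and one triple handles all of them. The triple $(-1,-(n-i+1),-(n-i))$ gives $\bar{1}\bar{3}\bar{2}$ for $n$ even, and $(1,-(n-i+1),-(n-i))$ gives $1\bar{3}\bar{2}$ for $n$ odd; this uses only $i\le n-2$.

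The real problem is the end elements, which you flag as delicate but then mis-predict. They are $w_0s_i\cdots s_{n-2}s_{n-1}=(-(n-i+1),\mp 1,-2,\dots,-(n-i),-(n-i+2),\dots,-n)$ and $w_0s_i\cdots s_{n-2}s_{n}=(n-i+1,\pm 1,-2,\dots,-(n-i),-(n-i+2),\dots,-n)$, with the upper sign for $n$ even. Three of these four have at most one positive entry, so none of your candidates $21\bar{3}\bar{4}$, $31\bar{2}\bar{4}$, $\bar{2}\bar{4}31$ can occur in them. The witnesses that do work are:
\begin{itemize}
\item[(i)] $\bar{2}\bar{1}\bar{3}$ via $(-(n-i+1),-2,-(n-i+2))$ for both $s_{n-1}$-ends;
\item[(ii)] $31\bar{2}\bar{4}$ via $(n-i+1,1,-2,-(n-i+2))$ for the $s_n$-end with $n$ even;
\item[(iii)] $3\bar{1}\bar{2}\bar{4}$ via $(n-i+1,-1,-2,-(n-i+2))$ for the $s_n$-end with $n$ odd.
\end{itemize}
All of these need the entry $-(n-i+2)$, i.e.\ $i\ge 2$. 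That hypothesis, not $i\le n-2$ alone, is what makes these elements singular. It has to be so, since for $i=1$ the same two elements are smooth (Corollary~\ref{smooth}). The paper's text assigns $\bar{1}\bar{3}\bar{2}$ to $w_0s_i\cdots s_{n-1}$ for $n$ even; that element actually avoids $\bar{1}\bar{3}\bar{2}$, and $\bar{2}\bar{1}\bar{3}$ is the correct witness.

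Two other steps would also fail.

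First, checking $n=4$, $i=2$ against $s_2s_1s_3s_4s_2$ cannot detect anything. The only $D_4$ subsystem of $D_4$ is $\Phi$ itself, and $f_\Phi(w)=w$ has length $10$ or $11$, not $5$.

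Second, your Billey--Postnikov fallback aims at the wrong targets. Since $\Phi_{w_0x}=\Phi^+\setminus(-w_0\Phi_x)$, one has for instance $\Phi_{w_0s_i}=\Phi^+\setminus\{-w_0\alpha_i\}$. So every flattening of $w_0s_i$ is either the longest element of $W(\Delta)$ or that element times one simple reflection. The only possible witness is therefore the $A_3$ pattern $s_1s_2s_3s_2s_1$ ($=4231$), never $s_2s_1s_3s_2$ or the $D_4$ pattern.

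Neither detour is needed, because the signed-pattern argument is uniform for $n\ge 4$. For $n=4$, $i=2$ the cell is $(-1,-3,-2,-4)$, $(-1,-3,-4,-2)$, $(-3,-1,-2,-4)$, $(3,1,-2,-4)$. These contain $\bar{1}\bar{3}\bar{2}$, $\bar{1}\bar{3}\bar{2}$, $\bar{2}\bar{1}\bar{3}$ and $31\bar{2}\bar{4}$ respectively.

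Finally, your claim that the displaced entry always has a neighbour on each side is false for $w_0s_is_{i-1}\cdots s_1$, where $-(n-i)$ lands in position $n$. The triple above covers that element anyway.
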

\begin{proof}
{\bf Case 1.} Suppose that $n$ is even.

    Note that 
    \begin{align*}
        &w_0s_is_{i-1}\cdots s_k\\
        =&(-1,\cdots,-(n-i-1),-(n-i+1),\cdots,-(n-i+k+1),\\&-(n-i),-(n-i+k+2),\cdots,-n),
    \end{align*}
which has the pattern $\bar{1}\bar{3}\bar{2}$ for $1\leq k\leq i-1$.

For the permutation 
\begin{align*}
    &w_0s_is_{i+1}\cdots s_{n-2}s_n\\=&(n-i+1,1,-2,\cdots,-(n-i-1),-(n-i),\\&-(n-i+2),\cdots,-n),
\end{align*}
which has the pattern $31\bar{2}\bar{4}$.

 Note that 
    \begin{align*}
        &w_0s_is_{i+1}\cdots s_{i+(k-1)}\\
        =&(-1,\cdots,-(n-i-k),-(n-i+1),-(n-i-k+1),\\&-(n-i-k+2),\cdots,-(n-i),-(n-i+2),\cdots,-n),
    \end{align*}
which has the pattern $\bar{1}\bar{3}\bar{2}$ for $2\leq k\leq n-i$ and the pattern $\bar{2}\bar{1}\bar{3}$ for $k=1$.

{\bf Case 2.} Suppose that $n$ is odd.

  Note that 
    \begin{align*}
        &w_0s_is_{i-1}\cdots s_k\\
        =&(1,\cdots,-(n-i-1),-(n-i+1),\cdots,-(n-i+k+1),\\&-(n-i),-(n-i+k+2),\cdots,-n),
    \end{align*}
which has the pattern $1\bar{3}\bar{2}$ for $1\leq k\leq i-1$.   

For the permutation 
\begin{align*}
    w_0s_is_{i+1}\cdots s_{n-2}s_n=(&-(n-i+1),1,-2,\cdots,-(n-i-1),-(n-i),\\&-(n-i+2),\cdots,-n),
\end{align*}
 which has the pattern $\bar{2}\bar{1}\bar{3}$.

 Note that 
    \begin{align*}
        &w_0s_is_{i+1}\cdots s_{i+(k-1)}\\
        =&(1,\cdots,-(n-i-k),-(n-i+1),-(n-i-k+1),\\&-(n-i-k+2),\cdots,-(n-i),-(n-i+2),\cdots,-n),
    \end{align*}
which has the pattern $1\bar{3}\bar{2}$ for $2\leq k\leq n-i$ and the pattern $3\bar{1}\bar{2}\bar{4}$ for $k=1$.

\end{proof}

For the remaining cases, we split the discussion into two subcases based on the parity of \(n\).
\subsection{Proof for even $n$}

{\bf Case 1.} From Table \ref{Elements-in-cell} and Lemma \ref{long}, we have 
\begin{align*}
    w_0\mathcal{C}_1=\{&(-1,-2,\cdots,2-n,-n,1-n),\\
	&\vdots\\
    &(-n,-1,-2,\cdots,1-n),\\
	&(n,1,-2,\cdots,1-n)\}.
\end{align*}

For the first $n-2$ permutations, they have the pattern $\bar{1}\bar{3}\bar{2}$. Next we prove that the last two permutations avoid the  $11$ patterns listed in Theorem \ref{pattern-avoid}.

For the permutation $(-n,-1,-2,\cdots,1-n)$, we only need to check the patterns $\bar{3}\bar{4}\bar{1}\bar{2}$, $\bar{1}\bar{3}\bar{2}$, $\bar{2}\bar{1}\bar{3}$ and $\bar{3}\bar{2}\bar{1}$. For the pattern $\bar{3}\bar{4}\bar{1}\bar{2}$, the element “$\bar{1}$” and “$\bar{2}$”  
must be chosen from $[1-n,-1]$. The relative order of “$\bar{3}$” and “$\bar{4}$” makes the pattern $\bar{3}\bar{4}\bar{1}\bar{2}$  impossible. For the pattern $\bar{1}\bar{3}\bar{2}$, the element “$\bar{1}$” and “$\bar{3}$”  
must be chosen from $[-n,-1]$, but there is no larger element “$\bar{2}$” such that “$\bar{2}$” is on the right side of  “$\bar{3}$”, thus the relative order of “$\bar{2}$” and “$\bar{3}$” makes the pattern $\bar{1}\bar{3}\bar{2}$ impossible. Similarly for the patterns $\bar{2}\bar{1}\bar{3}$ and $\bar{3}\bar{2}\bar{1}$, the relative order of “$\bar{2}$” and “$\bar{1}$" makes them impossible.

{\bf Case 2.} For $ w_0\mathcal{C}_{i}$ with $2\leq i\leq n-2$, we have given the result in Proposition \ref{non-smooth}.

{\bf Case 3.} For $ w_0\mathcal{C}_{n-1}$, we have
   $$ w_0s_{n-1}s_{n-2}s_n=(3,2,-1,-4,\cdots,-n),$$
which has the pattern $21\bar{3}\bar{4}$.

Also we have $$w_0s_{n-1}s_{n-2}\cdots s_k=(-2,\cdots,-k,-1,-k-1,\cdots,-n),$$
which has the pattern $\bar{2}\bar{1}\bar{3}$ for $2\leq k\leq n-1$. Note that $$w_0s_{n-1}s_{n-2}\cdots s_1=(-2,\cdots,-n,-1),$$ which avoids the $11$ patterns listed in Theorem \ref{pattern-avoid}, thus the Schubert variety $X(w_0s_{n-1}s_{n-2}\cdots s_1)$ is smooth.

{\bf Case 4.} For $w_0\mathcal{C}_{n}$, we have
   $$ w_0s_ns_{n-2}s_{n-1}=(-3,2,1,-4,\cdots,-n),$$
which has the pattern $21\bar{3}\bar{4}$.

Also we have $$w_0s_{n}s_{n-2}\cdots s_k=(2,-3,\cdots,-k,1,-k-1,\cdots,-n),$$
which has the pattern $2\bar{3}1\bar{4}$ for $3\leq k\leq n-1$ and $21\bar{3}\bar{4}$ for $k=2$. Note that $$w_0s_{n}s_{n-2}\cdots s_1=(2,-3,\cdots,-n,1),$$ which avoids the $11$ patterns listed in Theorem \ref{pattern-avoid}, thus the Schubert variety $X(w_0s_{n}s_{n-2}\cdots s_1)$ is smooth.

\subsection{Proof for odd $n$}

{\bf Case 1.} From Table \ref{Elements-in-cell} and Lemma \ref{long}, we have 
\begin{align*}
    w_0\mathcal{C}_1=\{&(1,-2,\cdots,2-n,-n,1-n),\\
	&\vdots\\
    &(n,-1,-2,\cdots,1-n),\\
	&(-n,1,-2,\cdots,1-n)\}.
\end{align*}

For the first $n-2$ permutations, they contain the pattern $1\bar{3}\bar{2}$, and the last two permutations avoid the $10$ patterns listed in Theorem \ref{pattern-avoid}. The argument is similar to the case where $n$ is even, so we omit the details here.

{\bf Case 2.} For $ w_0\mathcal{C}_{i}$ with $2\leq i\leq n-2$, we have given the result in
Proposition \ref{non-smooth}.

{\bf Case 3.} For $ w_0\mathcal{C}_{n-1}$, we have
   $$ w_0s_{n-1}s_{n-2}s_n=(-3,2,-1,-4,\cdots,-n),$$
which has the pattern $\bar{2}\bar{1}\bar{3}$.

Also we have $$w_0s_{n-1}s_{n-2}\cdots s_k=(2,-3,\cdots,-k,-1,-k-1,\cdots,-n),$$
which has the pattern $\bar{2}\bar{1}\bar{3}$ for $2\leq k\leq n-1$. When $k=1$ we have $$w_0s_{n-1}s_{n-2}\cdots s_1=(2,-3,\cdots,-n,-1),$$ which avoids the $10$ patterns listed in Theorem \ref{pattern-avoid}, thus the Schubert variety $X(w_0s_{n-1}s_{n-2}\cdots s_1)$ is smooth.

{\bf Case 4.} For $ w_0\mathcal{C}_{n}$, we have
   $$ w_0s_ns_{n-2}s_{n-1}=(3,2,1,-4,\cdots,-n),$$
which has the pattern $21\bar{3}\bar{4}$.

Also we have $$w_0s_{n}s_{n-2}\cdots s_k=(-2,-3,\cdots,-k,1,-k-1,\cdots,-n),$$
which has the pattern $\bar{2}\bar{3}1\bar{4}$ for $3\leq k\leq n-1$ and $\bar{2}1\bar{3}\bar{4}$ for $k=2$. When $k=1$ we have $$w_0s_{n}s_{n-2}\cdots s_1=(-2,-3,\cdots,-n,1),$$ which avoids the $10$ patterns listed in Theorem \ref{pattern-avoid}, thus the Schubert variety $X(w_0s_{n}s_{n-2}\cdots s_1)$ is smooth.
\begin{Rem}
   In types $D_3$ and $D_4$, in addition to the aforementioned smooth elements, there are additional smooth ones in $w_0\mathcal{C}_{n-1}$ and $w_0\mathcal{C}_{n}$:
    \smallskip
\begin{center}
\begin{tabular}{c|c|c}
\hline
 Lie type  &  $w_0\mathcal{C}_{n-1}$ &  $w_0\mathcal{C}_{n}$ \\
\hline
\mystrut
$D_3$  & $w_0s_{2}s_{1}s_3$, $w_0s_{2}$ & $w_0s_{3}s_{1}s_2$, $w_0s_{3}$ \\[.05in]
\hline
\mystrut
$D_4$ & $w_0s_{3}s_{2}s_4$ & $w_0s_4s_2s_3$
   \\[.05in]
\hline

\end{tabular}
\end{center}
\medskip
\end{Rem}

\begin{cor}\label{smooth}
    For type $D_n$ ($n\geq 4$), we have  
$$S(w_0\mathcal{C}_1)=\{w_0s_1\cdots s_{n-2}s_{n-1},\; w_0s_1\cdots s_{n-2}s_{n}\}.$$  
Moreover, for $n\geq 5$, we have $$S(w_0\mathcal{C}_{n-1})=\{w_0s_{n-1}s_{n-2}\cdots s_1\}$$ and $$S(w_0\mathcal{C}_n)=\{w_0s_{n}s_{n-2}\cdots s_1\}.$$
\end{cor}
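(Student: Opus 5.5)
The plan is to assemble the corollary from the case analysis carried out just above, organised by the right cells $w_0\mathcal{C}_i$ listed in Lemma \ref{rightcell-ci}(3). I will convert each element of these cells to one-line signed notation using Table \ref{Elements-in-cell} and the description of $w_0$ in Lemma \ref{long}; for even $n$ left multiplication by $w_0$ negates every entry, and for odd $n$ it negates every entry except those of absolute value $1$. Smoothness is then decided by Theorem \ref{pattern-avoid}: the $11$ patterns for even $n$ and the $10$ patterns for odd $n$. Each $w_0\mathcal{C}_i$ is a finite union of chains in the weak order, so each chain can be handled by one uniform formula in a single parameter.

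For $w_0\mathcal{C}_1$, the elements $w_0s_1\cdots s_k$ with $1\le k\le n-2$ have a one-line form containing $\bar1\bar3\bar2$ (even $n$) or $1\bar3\bar2$ (odd $n$). The two endpoints $w_0s_1\cdots s_{n-2}s_{n-1}$ and $w_0s_1\cdots s_{n-2}s_n$ are $(\mp n,\pm1,-2,\dots,1-n)$ in some order of signs. For these two I will go through the surviving patterns one at a time, as in Case~1 for even $n$. The argument is that every entry after the first is, up to at most one sign, decreasing, so no pattern needing an ascent among the later negative entries, or a large absolute value in a non-initial position, can occur. Hence $S(w_0\mathcal{C}_1)$ is exactly these two elements.

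For $w_0\mathcal{C}_{n-1}$ and $w_0\mathcal{C}_n$ with $n\ge5$, Cases~3 and~4 give an explicit forbidden pattern for each of the following elements:
\begin{itemize}
\item the branch elements $w_0s_{n-1}s_{n-2}s_n$ and $w_0s_ns_{n-2}s_{n-1}$;
\item the partial chain elements $w_0s_{n-1}\cdots s_k$ and $w_0s_n s_{n-2}\cdots s_k$ with $k\ge2$.
\end{itemize}
This leaves only $w_0s_{n-1}\cdots s_1$ and $w_0s_ns_{n-2}\cdots s_1$. Their one-line forms have all entries decreasing in absolute value except for one misplaced entry of absolute value $1$ at the end, so I will show they avoid the list. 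Since $2\le i\le n-2$ is already covered by Proposition \ref{non-smooth}, the cell description of Lemma \ref{rightcell-ci}(3) is exhausted.

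The main obstacle is making sure the lists are exhaustive. The branch elements such as $w_0s_{n-1}s_{n-2}$ must not be double counted or omitted. The restriction $n\ge5$ is necessary, because for $n=4$ the patterns of length four can fail to embed; this is where the extra smooth elements of the remark appear, namely the length-three element $w_0s_3s_2s_4$. I therefore plan to verify separately that every forbidden pattern used in Cases~3 and~4 needs four entries that exist only when $n\ge5$, and to treat the equality $S(w_0\mathcal{C}_1)$ uniformly for all $n\ge4$.
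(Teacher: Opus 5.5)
Your route is the paper's own: read off one-line forms, then decide smoothness with Theorem~\ref{pattern-avoid}. With the criterion you fix, however, it breaks at one element when $n$ is odd, and the paper's argument has the same hole.

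\textbf{The failing element.} With your (correct) rule that left multiplication by $w_0$ negates every entry except $\pm1$, the chain element $w_0s_{n-1}s_{n-2}\cdots s_2$ of $w_0\mathcal{C}_{n-1}$ is $(-2,-3,\dots,-(n-1),1,-n)$. For $n=5$ this is $(-2,-3,-4,1,-5)$. It has three features:
\begin{itemize}
\item its only positive entry has the smallest absolute value;
\item that positive entry is followed by a single entry;
\item its negative entries increase in absolute value.
\end{itemize}
Each of the ten odd-$n$ patterns is ruled out by one of these:
\begin{itemize}
\item $21\bar{3}\bar{4}$ and $31\bar{2}\bar{4}$ have two positive entries;
\item $\bar{1}2\bar{3}$, $2\bar{1}\bar{3}\bar{4}$, $\bar{2}4\bar{3}\bar{1}$, $3\bar{1}\bar{2}\bar{4}$ and $3\bar{4}\bar{1}\bar{2}$ have a positive entry that is not smallest in absolute value;
\item $1\bar{3}\bar{2}$ and $\bar{2}1\bar{3}\bar{4}$ have two entries after the positive one;
\item $\bar{2}\bar{1}\bar{3}$ has negative entries that do not increase in absolute value.
\end{itemize}
So, with containment defined through ${\rm fl}$, this element avoids the whole list.

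The paper's Case~4 certifies this one-line element only through $\bar{2}\bar{3}1\bar{4}$. That pattern is not on the list; it is exactly the kind of pattern discarded by rule~(4) in the proof of Theorem~\ref{pattern-avoid}. So your claim that ``Cases~3 and~4 give an explicit forbidden pattern'' for every partial chain element is false within the criterion you adopted.

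Yet the element is singular, as shown below. Hence the ten-pattern list is not a valid smoothness criterion for odd $n$. In particular you cannot use it to certify the odd-$n$ smooth elements either: the two endpoints of $w_0\mathcal{C}_1$, $w_0s_{n-1}\cdots s_1$ and $w_0s_ns_{n-2}\cdots s_1$.

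\textbf{Repair.} A parity-independent repair is to apply Theorem~\ref{BP-main} directly.
\begin{itemize}
\item Since $\Phi_{w_0x}=-w_0(\Phi^+\setminus\Phi_x)$, for every subsystem $\Delta$ the flattening $f_{w_0\Delta}(w_0x)$ corresponds, via $-w_0$, to $f_\Delta(x)\,w_0^{\Delta}$.
\item In $A_3$ the forbidden elements are $3412=s_1s_3w_0^{A_3}$ and $4231=s_2w_0^{A_3}$. The $D_4$ pattern would force $|\Phi_x\cap\Delta|=7$.
\item For $x\in\mathcal{C}$, $\Phi_x$ consists of the partial sums $\alpha_{i_1}+\cdots+\alpha_{i_j}$ along a path in the Dynkin diagram. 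These roots are linearly independent and have pairwise inner product $1$, so neither $3412$ nor the $D_4$ pattern can occur.
\item Consequently $X(w_0x)$ is singular if and only if some $A_3$-subsystem meets $\Phi_x$ exactly in its middle simple root.
\end{itemize}
For $x=s_{n-1}\cdots s_2$ one has $\Phi_x=\{\alpha_j+\cdots+\alpha_{n-1}:2\le j\le n-1\}$. The subsystem with simple roots $\alpha_1$, $\alpha_2+\cdots+\alpha_{n-1}$, $\alpha_n$ meets $\Phi_x$ only in its middle root, so it is such a witness.

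For $x=s_{n-1}\cdots s_1$, realise the roots by $\alpha_i=\varepsilon_i-\varepsilon_{i+1}$ and $\alpha_n=\varepsilon_{n-1}+\varepsilon_n$, so $\Phi_x=\{\varepsilon_j-\varepsilon_n\}$. Every $A_3$ with middle root $\varepsilon_m-\varepsilon_n$ has an outer simple root $\varepsilon_a-\varepsilon_m$ with $a<m$, and then $\varepsilon_a-\varepsilon_n\in\Phi_x$. So no witness exists and $X(w_0x)$ is smooth. The other three smooth elements are handled the same way.

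\textbf{Smaller corrections.}
\begin{itemize}
\item The paper's odd-$n$ formulas in Cases~3 and~4 are those of $xw_0$, so the labels $n-1$ and $n$ are interchanged there. For instance it writes $w_0s_{n-1}\cdots s_1=(2,-3,\dots,-n,-1)$, where your rule gives $(-2,\dots,-n,1)$. This is harmless by the symmetry $s_{n-1}\leftrightarrow s_n$, but you cannot quote those formulas verbatim.
\item The hypothesis $n\ge5$ matters only for the two branch elements. For example, for even $n$, $(3,2,-1,-4,\dots,-n)$ needs two entries of absolute value $>3$ to contain $21\bar{3}\bar{4}$. The partial chain elements are already singular for $n=4$, so your planned check that every pattern used ``needs entries that exist only when $n\ge5$'' is false and unnecessary.
\item $w_0s_3s_2s_4$ has length $9$, not $3$.
\end{itemize}
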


\begin{prop}
    When $W=W(D_n)$ with $n\geq 5$ and $w\in w_0\mathcal{C}_i$ for $1\leq i\leq n$, we have 
    \[{
V(L_w)=
\begin{cases}
\mathcal V(s_1w_0)=\mathcal V(w_0s_1)=\mathcal V(w_0s_1\cdots s_{n-2}s_{n-1}) & \text{if~}i=1;\\
\mathcal V(s_iw_0)=\mathcal V(w_0s_i)=\mathcal V(w_0s_is_{i+1}\cdots s_{n-1})  & \text{if~} 2\leq i\leq n-2\\
&\text{~and~}n-i\geq i;\\
\mathcal V(s_iw_0)=\mathcal V(w_0s_i)=\mathcal V(w_0s_is_{i-1}\cdots s_{1})  & \text{if~} 2\leq i\leq n-2\\
&\text{~and~}n-i< i;\\
\mathcal V(w_0s_{n-1})= \mathcal V(w_0s_{n-1}s_{n-2}\cdots s_1)& \text{if~}i=n-1;\\
\mathcal V(w_0s_{n})=\mathcal V(w_0s_{n}s_{n-2}\cdots s_1) & \text{if~}i=n.
\end{cases}}
\]

\end{prop}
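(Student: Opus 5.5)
The plan is to combine Lemma \ref{constant} with the smoothness results already established. Where a cell contains a smooth element, Proposition \ref{smoothequal} identifies $V(L_w)$ with its orbital variety. Where no smooth element exists, namely the cells $w_0\mathcal{C}_i$ with $2\le i\le n-2$ of Proposition \ref{non-smooth}, we need the minimal-orbit description of Lemma \ref{minimal-integralequal}, which applies because type $D$ is simply laced and so $\mathcal{O}_{\rm ms}=\mathcal{O}_{\min}$.

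\textbf{Cells with a smooth element ($i=1,n-1,n$).} By Lemma \ref{constant}, $V(L_w)=V(L_y)$ for every $y\in w_0\mathcal{C}_i$. Corollary \ref{smooth} gives smooth elements $w_0s_1\cdots s_{n-2}s_{n-1}$, $w_0s_{n-1}s_{n-2}\cdots s_1$ and $w_0s_ns_{n-2}\cdots s_1$ respectively, so Proposition \ref{smoothequal} yields $V(L_w)=\mathcal V(y_{\rm sm})$. To match this with $\mathcal V(w_0s_i)$, apply Lemma \ref{minimal-integralequal} to $w_0s_i$ itself. Since $(-w_0s_i\rho,\alpha^\vee)=(\rho,s_iw_0\ldots)$, a short computation shows $I_\lambda=\Pi\setminus\{\alpha_{i^*}\}$ for the appropriate index. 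Here $i^*$ is the image of $i$ under the diagram involution $-w_0$, which is trivial for $n$ even and swaps $n-1,n$ for $n$ odd. This gives $V(L_{w_0s_i})=\mathcal V(s_{i^*}w_0)$, a single orbital variety, and $s_{i^*}w_0=w_0s_i$. Hence $V(L_w)=\mathcal V(w_0s_i)=\mathcal V(s_iw_0)$ after identifying these via $s_iw_0=w_0s_{i^*}$. Equivalently, since $w_0s_i\in w_0\mathcal{C}_i$, the irreducible variety $V(L_{w_0s_i})$ must coincide with the smooth-element variety, by Lemma \ref{constant} and Proposition \ref{smoothequal}.

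\textbf{Cells $2\le i\le n-2$.} Here there is no smooth element, so I will instead use Lemma \ref{minimal-integralequal} directly. For any $w\in w_0\mathcal{C}_i$, the set $\Pi\setminus I_\lambda$ consists of simple roots $\alpha$ with $(-w\rho,\alpha^\vee)\le 0$, i.e.\ $s_\alpha\in$ the right descent-type set of $w_0w$. By Lemma \ref{constant} it suffices to take $w=w_0s_i$. Then $-w_0s_i\rho$ pairs positively with all simple coroots except the one fixed by the descent, so $\Pi\setminus I_\lambda$ is a single root and $V(L_w)=\mathcal V(s_iw_0)=\mathcal V(w_0s_i)$.

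The remaining claim is the identification $\mathcal V(w_0s_i)=\mathcal V(w_0s_is_{i+1}\cdots s_{n-1})$ (resp.\ $\mathcal V(w_0s_is_{i-1}\cdots s_1)$) according to whether $n-i\ge i$. I would prove it by applying Lemma \ref{gamma-w} to the displayed element $y$. This gives $\mathcal V(y)\subset V(L_y)=\mathcal V(w_0s_i)$, and since both are orbital varieties of the same orbit and hence of equal dimension and irreducible, they are equal.

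\textbf{Main obstacle.} The hard step is the parity-dependent bookkeeping of the $-w_0$ involution in the computation of $I_\lambda$, which I will check by writing $w_0s_i\rho$ in one-line notation using Table \ref{Elements-in-cell}. A second difficulty is explaining why the case split $n-i\ge i$ versus $n-i<i$ selects a particular representative. I expect this split to come from the length of the representative, so that it is the minimal-length element of the cell as in Lemma \ref{min-length-w}; that lemma would then be invoked as an alternative route.
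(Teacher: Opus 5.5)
Your proof is correct. For $i=1,n-1,n$ it matches the paper's argument: a smooth element combined with Lemma \ref{minimal-integralequal} applied to $w_0s_i$. For $2\le i\le n-2$ you take a genuinely different route.

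\textbf{Comparison for $2\le i\le n-2$.} The paper fixes the minimal-length representative $w_i$ of the right cell and lists the $y\in w_0\mathcal{C}$ with $y\le w_i$. It then proves $\Gamma(w_i)=\{w_i\}$ by induction, excluding each candidate $w_k$ via Proposition \ref{Same}: the inclusion $\mathcal{V}(w_k)\subset V(L_{w_i})\subset\mathfrak{u}_{{\rm J}_i}$ would force $\rho-\alpha_k$ to be $\Phi^+_{{\rm J}_i}$-dominant. Only at the end does it invoke Lemma \ref{minimal-integralequal} to get $\mathcal{V}(w_0s_i)$.

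You apply Lemma \ref{minimal-integralequal} first. This already gives irreducibility, $V(L_y)=\mathcal{V}(s_iw_0)=\mathcal{V}(w_0s_i)$, for every $y$ in the cell. Then $y\in\Gamma(y)$ together with a dimension count pins down $\mathcal{V}(y)$. Your route is shorter and avoids the Bruhat-order enumeration. It also proves $\mathcal{V}(y)=\mathcal{V}(w_0s_i)$ for every $y\in w_0\mathcal{C}_i$, so the split $n-i\ge i$ versus $n-i<i$ plays no role in your argument. What the paper's $\Gamma$-argument buys is independence from $\mathcal{O}_{\rm ms}=\mathcal{O}_{\min}$, which is why it is the template reused for $F_4$.

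\textbf{The step to justify.} You should justify ``both are orbital varieties of the same orbit.'' $\mathcal{V}(y)$ is an orbital variety of some orbit (Steinberg). It lies in $V(L_y)\subset\overline{\mathcal{O}}_{\min}=\mathcal{O}_{\min}\cup\{0\}$, and it is nonzero because $y\neq w_0$. Hence it belongs to $\mathcal{O}_{\min}$.

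You can also avoid dimensions altogether. Write $y=w_0x$ with $x\in\mathcal{C}_i$. Since $s_i$ is a left descent of $x$, we get $e_{\alpha_{i^*}}\in\mathfrak{n}\cap y\mathfrak{n}$, so $\mathcal{V}(w_0s_i)=\overline{Be_{\alpha_{i^*}}}\subset\mathcal{V}(y)$. Combined with $\mathcal{V}(y)\subset V(L_y)=\mathcal{V}(w_0s_i)$, this gives equality.

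This step is exactly where simple-lacedness enters. In $C_2$, for instance, $\mathcal{V}(w_0s_2)=\overline{Be_{\alpha_2}}$ has dimension $2$, while $V(L_{w_0s_2})$ has dimension $3$.

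\textbf{Two harmless slips.}
\begin{enumerate}
\item For $n$ odd and $i\in\{n-1,n\}$ we have $s_iw_0=w_0s_{i^*}\neq w_0s_i$. So your equality $\mathcal{V}(w_0s_i)=\mathcal{V}(s_iw_0)$ fails there, but the statement does not claim it.
\item Lemma \ref{min-length-w} is not an available alternative for $2\le i\le n-2$. The minimal-length elements of the two-sided cell $w_0\mathcal{C}$ have length $\ell(w_0)-(n-1)$ and lie only in $w_0\mathcal{C}_1$, $w_0\mathcal{C}_{n-1}$ and $w_0\mathcal{C}_n$.
\end{enumerate}
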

\begin{proof}
  Suppose $w\in w_0\mathcal{C}_i$. If the right cell $w_0\mathcal{C}_i$ contains some smooth element $w_0x$, we will have $V(L_w)=V(L_{w_0x})=\mathcal{V}(w_0x)$ for any $w\in w_0\mathcal{C}_i$ by Lemma \ref{constant} and Proposition \ref{smoothequal}. Thus 
$V(L_{w})$ can be characterized by Corollary \ref{smooth} for $i=1, n-1$ and $n$.

From \cite[\S 2.3 and Exer. 4.10]{BB05}, we have $$w_0s_i=\begin{cases}
s_iw_0 &  \text{~if~} 1\leq i\leq n-2;\\
s_{n-1}w_0  &  \text{~if~$n$~is~even, and~} i=n-1;\\
s_{n}w_0  &  \text{~if~$n$~is~odd, and~} i=n-1;\\
s_{n}w_0  &  \text{~if~$n$~is~even, and~} i=n;\\
s_{n-1}w_0  &  \text{~if~$n$~is~odd, and~} i=n.
\end{cases}$$

 When $i=1$, similarly we also have $V(L_w)=\mathcal V(s_1w_0)=\mathcal V(w_0s_1)$.

 But when $i=n-1$, from Lemma \ref{minimal-integralequal} we have 
 $V(L_w)=\mathcal V(s_{n-1}w_0)=\mathcal V(w_0s_{n-1})$ when $n$ is even and $V(L_w)=\mathcal V(s_{n}w_0)=\mathcal V(w_0s_{n-1})$ when $n$ is odd. When $i=n$, the result is similar.

 Next, we consider the case where there is no smooth element in $w_0\mathcal{C}_i$ for $2\leq i\leq n-2$. 
 
 When $w \in w_0\mathcal{C}_i$ with $2\leq i\leq n-2$, Lemma \ref{constant} implies $V(L_w) = V(L_{w_i})$, where  $w_i=w_0s_is_{i+1}\cdots s_{n-1}$ if $n-i\geq i$ and $w_i=w_0s_is_{i-1}\cdots s_{1}$ if $n-i\leq i$.
 \begin{enumerate}
     \item When $n$ is even  and $n-i\geq i$, we have $w_0(\alpha_i)=-\alpha_i$. In this case $w_i=w_0s_is_{i+1}\cdots s_{n-1}$ is the minimal length element in the right cell $w_0\mathcal{C}_i$. From Lemma \ref{gamma-w} we have $V(L_{w_i})=\bigcup\limits_{y\in \Gamma(w_i)}\mathcal{V}(y)$ for some subset $\Gamma(w_i)\subset W$.  Also  for any $y\in \Gamma(w_i)$, we have  
     \begin{equation}\label{orbital-element}
    y\leq w_i \text{~and~} y\in w_0\mathcal{C}.
     \end{equation}
There are $i$ elements which may satisfy the condition (\ref{orbital-element}): $$w_i,w_{i-1},\cdots, w_1,$$ where $w_t=w_0s_ts_{t+1}\cdots s_{n-1}\in w_0\mathcal{C}_t$ for $1\leq t\leq i$. We claim that 
     \begin{align}\label{Gamma-set}
         \Gamma(w_t)=w_t.
     \end{align}
We prove the claim (\ref{Gamma-set}) by induction on $t$. If $t=1$, $w_1$ is minimal length element in two-sided cell $w_0\mathcal{C}$, then $ \Gamma(w_1)=w_1$ by Lemma \ref{min-length-w}. If $t=2$, there are only two elements which may satisfy the condition (\ref{orbital-element}): $w_2$ and $w_1$. Denote ${\rm J}_l=\Pi\setminus {\alpha_l}$. First we have $-w_0s_2\rho=-w_0(\rho-\alpha_2)=\rho-\alpha_2$, which is $\Phi_{{\rm J}_{2}}^+$-dominant. Then from Proposition \ref{Same}, $V(L_{w_2})=V(L_{w_0s_2})\subset \fru_{\operatorname{J}_2}$ since $w_0s_2\in w_0\mathcal{C}_2$. If $w_1\in\Gamma(w_2)$, we will have $V(L_{w_1})=\mathcal{V}(w_1)\subset V(L_{w_2})\subset \fru_{\operatorname{J}_2}$. Then from Proposition \ref{Same}, $-w_0s_1\rho=\rho-\alpha_1$ will be  $\Phi_{{\rm J}_2}^+$-dominant, which is a contradiction since $(\rho-\alpha_1,\alpha^{\vee}_1)=-1$. Thus $\Gamma(w_2)=w_2$. Now assume that for $t\leq i-1$, we always have $\Gamma(w_{t})=w_{t}$. When $t=i$, $-w_0s_i\rho=-w_0(\rho-\alpha_i)=\rho-\alpha_{i}$ is $\Phi_{{\rm J}_{i}}^+$-dominant. Then $V(L_{w_i})=V(L_{w_0s_i})\subset \fru_{\operatorname{J}_i}$ since $w_0s_i\in w_0\mathcal{C}_i$. If $w_k=s_ks_{k+1}\cdots s_{n-1}\in\Gamma(w_i)$ for some $1\leq k<i$, we will have $V(L_{w_k})=\mathcal{V}(w_k)\subset V(L_{w_i})\subset\fru_{\operatorname{J}_i}$. Then from Proposition \ref{Same}, $-w_0s_k\rho=\rho-\alpha_k$ will be  $\Phi_{{\rm J}_i}^+$-dominant, which is a contradiction since $(\rho-\alpha_k,\alpha^{\vee}_k)=-1$. Thus $w_k\notin \Gamma(w_i)$. So we must have $\Gamma(w_i)=w_i$.

\item When $n$ is even  and $n-i< i$, we have $w_0(\alpha_i)=-\alpha_i$. In this case $w_i=w_0s_is_{i-1}\cdots s_{1}$ is the minimal length element in the right cell $w_0\mathcal{C}_i$. There are $n-i$ elements which may satisfy the condition (\ref{orbital-element}): $w_i,w_{i+1},\cdots, w_n$, where $w_t=w_0s_ts_{t-1}\cdots s_1\in w_0\mathcal{C}_t$ for $i\leq t\leq n$. First we have $-w_0s_i\rho=-w_0(\rho-\alpha_i)=\rho-\alpha_{i}$, which is $\Phi_{{\rm J}_{i}}^+$-dominant. Then $V(L_{w_i})=V(L_{w_0s_i})\subset \fru_{\operatorname{J}_i}$ since $w_0s_i\in w_0\mathcal{C}_i$. Similarly we start the induction process from $t=n$ and assume that for $i-1\leq t\leq n$, we always have $\Gamma(w_{t})=w_{t}$. 
When $t=i$, $-w_0s_i\rho=-w_0(\rho-\alpha_i)=\rho-\alpha_{i}$ is $\Phi_{{\rm J}_{i}}^+$-dominant. Then $V(L_{w_i})=V(L_{w_0s_i})\subset \fru_{\operatorname{J}_i}$ since $w_0s_i\in w_0\mathcal{C}_i$. If $w_k=s_ks_{k-1}\cdots s_{1}\in\Gamma(w_i)$ for some $i-1\leq k\leq n$, we will have $V(L_{w_k})=\mathcal{V}(w_k)\subset V(L_{w_i})\subset\fru_{\operatorname{J}_i}$. Then from Proposition \ref{Same}, $-w_0s_k\rho=\rho-\alpha_k$ will be  $\Phi_{{\rm J}_i}^+$-dominant, which is a contradiction since $(\rho-\alpha_k,\alpha^{\vee}_k)=-1$. Thus $w_k\notin \Gamma(w_i)$. So we must have $\Gamma(w_i)=w_i$.


\item When $n$ is odd, we will have 
\[
w_0(\alpha_i)=-\alpha_{k_i}=\begin{cases}
    -\alpha_i, & \text{~if~} 1\leq i\leq n-2,\\
    -\alpha_n,& \text{~if~} i=n-1,\\
    -\alpha_{n-1}, &\text{~if~} i=n.
\end{cases}
\]

In this case, the argument is similar to the case when $n$ is even, so we omit the details here. Then we obtain $\Gamma(w_i) = w_i$.

 \end{enumerate}
 
 On the other hand, it is easy to check that $(-w_0s_i\rho,\alpha_{k_i}^{\vee})=(\rho-\alpha_{k_i},\alpha_{k_i}^{\vee})=0$  and $(-w_0s_i\rho,\alpha_j^{\vee})>0$ for all $j\neq k_i$, then by Lemma \ref{minimal-integralequal} we have $V(L_w)=V(L_{w_0s_i})=\mathcal V(s_{k_i}w_0)=\mathcal V(w_0s_i)$ for $w\in w_0\mathcal{C}_i$.

\end{proof}

Note that  the minimal length element 
 of the KL right cell $w_0\mathcal{C}_i$ is 
 $$w_{\rm min}=\begin{cases}
w_0s_1\cdots s_{n-2}s_{n-1} & \text{if~}i=1;\\
w_0s_is_{i+1}\cdots s_{n-1}  & \text{if~} 2\leq i\leq n-2\text{~and~}n-i\geq i;\\
w_0s_is_{i-1}\cdots s_{1}  & \text{if~} 2\leq i\leq n-2\text{~and~}n-i< i;\\
w_0s_{n-1}s_{n-2}\cdots s_1& \text{if~}i=n-1;\\
w_0s_{n}s_{n-2}\cdots s_1 & \text{if~}i=n.
\end{cases} $$


\section{The case of $E$ and $F_4$}\label{typeef}

For type $E$ and $F_4$, we have verified by Python that all the elements in $w_0\mathcal{C}$ are non-smooth, i.e.,  $S(w_0\mathcal{C}_i)=\varnothing$.

The algorithm is based on the following steps.
\begin{enumerate}
    \item For $w\in W$, we can compute the inversion set $\Phi_w=\Phi^+\cap w (\Phi^-)$ by \cite[§5.6-Exer.~1]{hum90}.

    \item For type $E$ (resp. $F$), identify all root subsystems $\Delta$ of type $A_3$, $D_4$ (resp. $B_2$, $A_3$, $B_3$, $C_3$) by \cite[Lem. 3.5]{BGWX}.

    \item Since the elements of $\Phi_w$ are positive roots, then $\Delta_\sigma=\Phi_w\cap \Delta=\Phi_w\cap\Delta^+$ is the inversion set for a unique $\sigma\in W(\Delta)$ in the Weyl group of $\Delta$, where $\Delta^+=\Delta\cap\Phi^+$ is the positive root system of $\Delta$.

    \item Given the inversion set $\Delta_\sigma$, we obtain $\sigma$ by  \cite[§5.6-Exer.~1]{hum90}:
    Denote $\Delta_\sigma=\{\gamma_1,\gamma_2,\cdots,\gamma_k\}$, if $\gamma_{i_1}$ is a simple root of $\Delta$, let $$I_1=s_{\gamma_{i_1}}(\Delta_\sigma\setminus\{\gamma_{i_1}\})=\Delta_{s_{\gamma_{i_1}}\sigma}.$$
If there is a simple root $\gamma_{i_1}\in I_1$ of $\Delta$, let $$I_2=s_{\gamma_{i_2}}(I_1\setminus\{\gamma_{i_2}\})=\Delta_{s_{\gamma_{i_2}}s_{\gamma_{i_1}}\sigma}.$$
Repeat the above steps, we can get that $$I_k=s_{\gamma_{i_k}}(I_{k-1}\setminus\{\gamma_{i_k}\})=\Delta_{s_{\gamma_{i_k}}\cdots s_{\gamma_{i_2}}s_{\gamma_{i_1}}\sigma}=\varnothing.$$
Then $\sigma=s_{\gamma_{i_1}}s_{\gamma_{i_2}}\cdots s_{\gamma_{i_k}}$.

    \item If $\sigma$ avoids all patterns listed in Theorem \ref{BP-patterns} for all root subsystems, then $X(w_0\mathcal{C})$ is smooth. Otherwise, $X(w_0\mathcal{C})$ is non-smooth.
\end{enumerate}

\subsection{The case of $E$}
First we consider the case of type $E$. We have the following result.

\begin{prop}
    When $W=W(E_6)$ and $w\in w_0\mathcal{C}_i$ for $1\leq i\leq 6$, we have 
    \[
V(L_w)=
\begin{cases}
\mathcal V(s_6w_0)=\mathcal V(w_0s_1)=\mathcal V(w_0s_1s_3s_4s_5s_6) & \text{if~}i=1;\\
\mathcal V(s_2w_0)=\mathcal V(w_0s_2)=\mathcal V(w_0s_2s_4s_5s_6) & \text{if~}i=2;\\
\mathcal V(s_5w_0)=\mathcal V(w_0s_3)=\mathcal V(w_0s_3s_4s_5s_6) & \text{if~}i=3;\\
\mathcal V(s_4w_0)=\mathcal V(w_0s_4)=\mathcal V(w_0s_4s_5s_6) & \text{if~}i=4;\\
\mathcal V(s_3w_0)=\mathcal V(w_0s_5)=\mathcal V(w_0s_5s_4s_3s_1) & \text{if~}i=5;\\
\mathcal V(s_1w_0)=\mathcal V(w_0s_6)=\mathcal V(w_0s_6s_5s_4s_3s_1) & \text{if~}i=6.
\end{cases}
\]
When $W = W(E_r)$ with $r=7,8$ and $w \in w_0 C_i$ for $1 \le i \le r$, we have $V(L_w)=\mathcal V(s_iw_0)=\mathcal{V}(w_0s_i)=\mathcal{V}(w_{\rm min})$.

\end{prop}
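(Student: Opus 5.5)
The plan is to follow the strategy used for $D_n$ with $2\le i\le n-2$, since by the computation of \S\ref{typeef} no right cell $w_0\mathcal{C}_i$ in type $E$ contains a smooth element. By Lemma \ref{constant}, $V(L_w)=V(L_{w_0s_i})$ for every $w\in w_0\mathcal{C}_i$, so it suffices to treat one representative. The argument then has two independent halves. The first identifies $V(L_{w_0s_i})$ via Lemma \ref{minimal-integralequal}. The second shows it is the single orbital variety $\mathcal{V}(w_{\rm min})$ via Lemma \ref{gamma-w} and Proposition \ref{Same}.

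\textbf{First half.} Since type $E$ is simply laced, Lemma \ref{hklem2} and the remark after it give $\mathcal{O}_{\rm ms}=\mathcal{O}_{\min}$, so $L_w$ is minimal and Lemma \ref{minimal-integralequal} applies. Write $w_0(\alpha_i)=-\alpha_{k_i}$. On $E_6$ this involution is the diagram automorphism $1\leftrightarrow6$, $3\leftrightarrow5$, with $2,4$ fixed. On $E_7$ and $E_8$ we have $w_0=-1$, so $k_i=i$. Then
\[
-w_0s_i\rho=-w_0(\rho-\alpha_i)=\rho-\alpha_{k_i}.
\]
Pairing with simple coroots gives $0$ at $\alpha_{k_i}$ and a positive value at every other simple root. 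Hence $I_\lambda=\Pi\setminus\{\alpha_{k_i}\}$, and Lemma \ref{minimal-integralequal} yields $V(L_w)=\mathcal{V}(s_{k_i}w_0)$. Finally, $s_{k_i}w_0=w_0s_i$ because $w_0s_iw_0=s_{k_i}$. This gives the first two equalities in each line; for $E_6$ it explains the pairings $s_6w_0=w_0s_1$, $s_5w_0=w_0s_3$, and so on.

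\textbf{Second half.} Let $w_{\rm min}$ be the minimal-length element of $w_0\mathcal{C}_i$. It is $w_0$ times the maximal element of $\mathcal{C}_i$, read off from Lemma \ref{rightcell-ci}. For $E_6$ this gives $w_0s_1s_3s_4s_5s_6$, $w_0s_2s_4s_5s_6$, etc.; one checks each listed word is the longest path, and that equal-length ties give the same orbital variety. By Lemma \ref{gamma-w}, $V(L_{w_{\rm min}})=\bigcup_{y\in\Gamma(w_{\rm min})}\mathcal{V}(y)$, where every $y$ lies in $w_0\mathcal{C}$ and satisfies $y\le w_{\rm min}$. Such $y$ have the form $w_0x$ with $x\in\mathcal{C}$, and $y\le w_{\rm min}$ forces $x\ge x_{\max}$ in Bruhat order. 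Because elements of $\mathcal{C}$ are paths in the Dynkin diagram, the only candidates are $y=w_0x$ where $x$ is an extension or the maximal element of other cells $\mathcal{C}_t$. I will enumerate these candidates per type from Lemma \ref{rightcell-ci}.

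Exclusion follows the $D_n$ argument. Proposition \ref{Same}, applied to $\lambda=-w_0s_i\rho=\rho-\alpha_{k_i}$ with ${\rm J}=\Pi\setminus\{\alpha_{k_i}\}$, gives $V(L_{w_{\rm min}})\subset\mathfrak{u}_{\rm J}$. Suppose some $y\in w_0\mathcal{C}_t$ with $t\ne i$ lay in $\Gamma(w_{\rm min})$. By induction on the candidate set, starting from the global minimum, which is handled by Lemma \ref{min-length-w}, we may assume $\mathcal{V}(y)=V(L_y)$. Then $V(L_y)\subset\mathfrak{u}_{\rm J}$, so by Proposition \ref{Same} the weight $\rho-\alpha_{k_t}$ would be $\Phi^+_{\rm J}$-dominant. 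This fails because $(\rho-\alpha_{k_t},\alpha_{k_t}^\vee)=0$ and $\alpha_{k_t}\in{\rm J}$. Hence $\Gamma(w_{\rm min})=\{w_{\rm min}\}$ and $V(L_w)=\mathcal{V}(w_{\rm min})$.

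The main obstacle is this second half. It requires organizing the induction over the finitely many Bruhat-comparable candidates across cells, which is more delicate in type $E$ because the diagram branches. It also requires handling candidates $y$ in the \emph{same} cell $w_0\mathcal{C}_i$; there I would use that $V(L_y)$ is constant on the cell and that $\mathcal{V}(y)$ for $y$ in the cell, being below $w_{\rm min}$ only if equal to it by minimality of length, contributes nothing new. If the abstract enumeration proves unwieldy for $E_7$ and $E_8$, I would fall back on the Python verification of the Bruhat relations among the $r$ minimal elements.
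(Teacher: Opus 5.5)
Your proposal is correct and follows the paper's proof essentially step for step: constancy on right cells, Lemma \ref{minimal-integralequal} for $V(L_{w_0s_i})=\mathcal{V}(s_{k_i}w_0)=\mathcal{V}(w_0s_i)$, and Lemma \ref{gamma-w} with Proposition \ref{Same} and an induction anchored at Lemma \ref{min-length-w} to get $\Gamma(w_{\rm min})=\{w_{\rm min}\}$; your description of the candidates as $w_0x$, with $x$ a Dynkin path ending in $x_{\max}$, is sharper than the paper's list, but you should state explicitly that each such $x$ has maximal length in its own cell $\mathcal{C}_t$, since that is what lets the induction hypothesis give $V(L_y)=\mathcal{V}(y)$. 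Also, $(\rho-\alpha_k,\alpha_k^\vee)=1-2=-1$ rather than $0$ (a slip the paper makes too), which is harmless because only non-positivity is used.
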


\begin{proof}
    For $w_0x\in w_0\mathcal{C}$ we have $\ell(w_0x)=\ell(w_0)-\ell(x)$ (see \cite[Cor. 2.3.3]{BB05}). Thus  $\ell(w_0x)$ is minimal if and only if  $\ell(x)$ is maximal. Now we have two maximal length elements in the two-sided cell $\mathcal{C}$: $s_1s_3s_4s_5s_6\in \mathcal{C}_1$ and $s_6s_5s_4s_3s_1\in \mathcal{C}_6$. Thus from  Lemma \ref{min-length-w}, we have $V(L_w)=\mathcal V(w_0s_1s_3s_4s_5s_6)$ when $w\in w_0\mathcal{C}_1$ and $V(L_w)=\mathcal V(w_0s_6s_5s_4s_3s_1)$ when $w\in w_0\mathcal{C}_6$.


When $w \in w_0\mathcal{C}_3$, Lemma \ref{constant} implies $V(L_w) = V(L_{w_3})$, where  $w_3=w_0s_3s_4s_5s_6 \in w_0\mathcal{C}_3$. Note that $w_3$ is the minimal length element in  the right cell $w_0\mathcal{C}_3$. From Lemma \ref{gamma-w} we have $V(L_{w_3})=\bigcup\limits_{y\in \Gamma(w_3)}\mathcal{V}(y)$ for some subset $\Gamma(w_3)\subset W$. For any $y\in \Gamma(w_3)$, we  have \begin{equation}\label{orbital-ele}
    y\leq w_3 \text{~and~} y\in w_0\mathcal{C}
\end{equation} by Lemma \ref{gamma-w}. There are only three elements which may satisfy the condition (\ref{orbital-ele}): $w_3$, $w_1=w_0s_1s_3s_4s_5s_6\in w_0\mathcal{C}_1$
 and $w_6=w_0s_6s_5s_4s_3s_1\in w_0\mathcal{C}_6$. 
 Denote ${\rm J}_l=\Pi\setminus {\alpha_l}$. 
Then,  $-w_0s_i\rho=-w_0(\rho-\alpha_i)=\rho-\alpha_{k_i}$ is $\Phi_{{\rm J}_{k_i}}^+$-dominant, where
 $w_0$ transforms $(\alpha_1,\alpha_2,\alpha_3,\alpha_4,\alpha_5,\alpha_6)$, respectively into $(-\alpha_6,-\alpha_2,-\alpha_5,-\alpha_4,-\alpha_3,-\alpha_1)$ and we denote $w_0(\alpha_i)=-\alpha_{k_i}$.
We  have $V(L_{w_i})=V(L_{w_0s_i})\subset \mathfrak{u}_{{\rm J}_{k_i}}$ since $w_0s_i\in w_0\mathcal{C}_i$ for $i=1,3,6$. If $w_1\in \Gamma(w_3)$,  we will have $V(L_{w_1})=\mathcal{V}(w_1)\subset V(L_{w_3})\subset \mathfrak{u}_{{\rm J}_5}$. Then from Proposition \ref{Same}, $-w_0s_1\rho=\rho-\alpha_6$ will be  $\Phi_{{\rm J}_5}^+$-dominant, which is a contradiction since $(\rho-\alpha_6,\alpha^{\vee}_6)=0$. Thus $w_1\notin \Gamma(w_3)$. Similarly, $w_6\notin \Gamma(w_3)$. So we must have $\Gamma(w_3)=w_3$. Similarly, we have $\Gamma(w_5)=w_5$ and $\Gamma(w_2)=w_2$ where $w_5=w_0s_5s_4s_3s_1$ (resp. $w_2=w_0s_2s_4s_5s_6$) is the minimal length element in the right cell $w_0\mathcal{C}_5$ (resp. $w_0\mathcal{C}_2$).

When $w \in w_0\mathcal{C}_4$, Lemma \ref{constant} implies $V(L_w) = V(L_{w_4})$, where  $w_4=w_0s_4s_5s_6 \in w_0\mathcal{C}_4$. Note that $w_4$ is the minimal length element in  the right cell $w_0\mathcal{C}_4$. From Lemma \ref{gamma-w} we have $V(L_{w_4})=\bigcup\limits_{y\in \Gamma(w_4)}\mathcal{V}(y)$ for some subset $\Gamma(w_4)\subset W$.
Similarly we find that there are only four elements which may be contained in  $\Gamma(w_4)$: $w_4$, $w_3$, $w_2$ and $w_1$. Then by a similar argument with the case of $w\in w_0\mathcal{C}_3 $, we will have $\Gamma(w_4)=w_4$.


Moreover, it is easy to verify that $(-w_0s_i\rho,\alpha_{k_i}^{\vee})=(\rho-\alpha_{k_i},\alpha_{k_i}^{\vee})=0$ and $(-w_0s_i\rho,\alpha_j^{\vee})>0$ for all $j\neq k_i$. Consequently, by Lemma \ref{minimal-integralequal}, we obtain $V(L_w)=V(L_{w_0s_i})=\mathcal V(s_{k_i}w_0)=\mathcal V(w_0s_i)$ for any $w\in w_0\mathcal{C}_i$.

For type $E_7$ and $E_8$, the arguments are similar.

Now the proof is finished.

\end{proof}

\subsection{The case of $F_4$}\label{typef4}

Now we consider the case of type $F_4$. We have the following result.

\begin{prop}\label{f4av}
    When $W=W(F_4)$ and $w\in w_0\mathcal{C}_i$, we have 
    \[
V(L_w)=
\begin{cases}
\mathcal V(w_0s_1s_2s_3s_2s_1) & \text{if~}i=1;\\
\mathcal V(w_0s_2s_3s_2s_1) & \text{if~}i=2;\\
\mathcal V(w_0s_3s_2s_3s_4) & \text{if~}i=3;\\
\mathcal V(w_0s_4s_3s_2s_3s_4) & \text{if~}i=4.
\end{cases}
\]
\end{prop}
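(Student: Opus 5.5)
By Lemma \ref{constant}, $V(L_w)$ is constant on each right cell $w_0\mathcal{C}_i$, so it suffices to compute $V(L_{w_i})$ for one representative $w_i$. We take $w_i$ to be the minimal length element of $w_0\mathcal{C}_i$. Since $\ell(w_0x)=\ell(w_0)-\ell(x)$, this means $w_i=w_0x_i$ with $x_i$ of maximal length in $\mathcal{C}_i$. Reading off Lemma \ref{rightcell-ci}, the longest words are
\[
x_1=s_1s_2s_3s_2s_1,\quad x_2=s_2s_3s_2s_1,\quad x_3=s_3s_2s_3s_4,\quad x_4=s_4s_3s_2s_3s_4,
\]
which are exactly the elements in the statement. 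Because type $F_4$ has no smooth elements in $w_0\mathcal{C}$, Proposition \ref{smoothequal} is unavailable. We argue via Lemma \ref{gamma-w}, as in types $D$ and $E$.

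\textbf{Cells $w_0\mathcal{C}_1$ and $w_0\mathcal{C}_4$.} Here $x_1$ and $x_4$ both have length $5$, the maximum over $\mathcal{C}$. Hence $w_0x_1$ and $w_0x_4$ are elements of minimal length in the two-sided cell $w_0\mathcal{C}$. Lemma \ref{min-length-w} then gives $V(L_y)=\mathcal{V}(w_0x_1)$ for all $y\in w_0\mathcal{C}_1$, and likewise for $i=4$. Strictly, Lemma \ref{min-length-w} is phrased for "an" element of minimal length. We apply it to each of the two minimal-length elements separately; its proof in \cite{Mel93} rests only on the fact that $\Gamma(w)$ must consist of elements $y\le w$ in the same two-sided cell, and for a minimal-length $w$ only $w$ itself qualifies.

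\textbf{Cells $w_0\mathcal{C}_2$ and $w_0\mathcal{C}_3$.} Write $V(L_{w_i})=\bigcup_{y\in\Gamma(w_i)}\mathcal{V}(y)$ with $y\le w_i$ and $y\in w_0\mathcal{C}$.
\begin{enumerate}
\item Since $y\le w_0x_i$ is equivalent to $x_i\le x$ when $y=w_0x$, we list the $x\in\mathcal{C}$ with $x\ge x_i$ in Bruhat order and $\ell(x)\ge\ell(x_i)$. For $x_2=s_2s_3s_2s_1$ the candidates are $x_2$ itself and $x_1=s_1s_2s_3s_2s_1$. For $x_3=s_3s_2s_3s_4$ they are $x_3$ and $x_4$. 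This Bruhat comparison is a finite check on reduced words via the subword property.
\item To exclude the extra candidate, we use Proposition \ref{Same}. In $F_4$ we have $w_0=-1$, so $-w_0s_i\rho=s_i\rho=\rho-\alpha_i$. This weight is $\Phi^+_{{\rm J}_i}$-dominant with ${\rm J}_i=\Pi\setminus\{\alpha_i\}$, because $(\rho-\alpha_i,\alpha_j^\vee)=1-\langle\alpha_i,\alpha_j^\vee\rangle>0$ for $j\ne i$. Hence $V(L_{w_i})=V(L_{w_0s_i})\subset\fu_{{\rm J}_i}$.
\item Suppose $w_0x_1\in\Gamma(w_2)$. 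Then $\mathcal{V}(w_0x_1)=V(L_{w_0s_1})\subset\fu_{{\rm J}_2}$. By Proposition \ref{Same}, $\rho-\alpha_1$ would be $\Phi^+_{{\rm J}_2}$-dominant, but $(\rho-\alpha_1,\alpha_1^\vee)=-1$. So $\Gamma(w_2)=\{w_2\}$. The identical argument with $\alpha_4$ gives $\Gamma(w_3)=\{w_3\}$.
\end{enumerate}

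The step we expect to be the main obstacle is the Bruhat-order enumeration in step 1. It must be checked that no other element of $w_0\mathcal{C}$, for instance from another branch of $\mathcal{C}_2$ or $\mathcal{C}_3$, lies below $w_i$. If further candidates appear, they are excluded the same way, via the dominance obstruction $(\rho-\alpha_k,\alpha_k^\vee)<0$ for each $k\ne i$.
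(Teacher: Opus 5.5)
Your proposal is correct and follows essentially the same route as the paper. For $i=1,4$ you use Lemma \ref{min-length-w}; for $i=2,3$ you restrict $\Gamma(w_i)$ via Lemma \ref{gamma-w} and exclude the extra candidate through Proposition \ref{Same}, using that $\rho-\alpha_k$ is not $\Phi^+_{{\rm J}_i}$-dominant. Your Bruhat enumeration is slightly sharper than the paper's, which also lists $w_0s_4s_3s_2s_3s_4$ as a candidate for $\Gamma(w_2)$ and then excludes it the same way, and your value $(\rho-\alpha_1,\alpha_1^\vee)=-1$ is the correct one where the paper writes $0$.
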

\begin{proof}
    For $w_0w\in w_0\mathcal{C}$ we have $\ell(w_0w)=\ell(w_0)-\ell(w)$ (see \cite[Cor. 2.3.3]{BB05}). Thus  $\ell(w_0w)$ is minimal if and only if  $\ell(w)$ is maximal. The desired conclusion for $i=1$ and $4$ now follows from Lemma \ref{min-length-w} since we have two maximal length elements in the two-sided cell $\mathcal{C}$: $s_1s_2s_3s_2s_1\in \mathcal{C}_1$ and $s_4s_3s_2s_3s_4\in \mathcal{C}_4$.


When $w \in w_0\mathcal{C}_2$, Lemma \ref{constant} implies $V(L_w) = V(L_{w_2})$, where  $w_2=w_0s_2s_3s_2s_1 \in w_0\mathcal{C}_2$. Note that $w_2$ is the minimal length element in  the right cell $w_0\mathcal{C}_2$. From Lemma \ref{gamma-w} we have $V(L_{w_2})=\bigcup\limits_{y\in \Gamma(w_2)}\mathcal{V}(y)$ for some subset $\Gamma(w_2)\subset W$. For any $y\in \Gamma(w_2)$, we  have $y\leq w_2$ and $y\in w_0\mathcal{C}$ by Lemma \ref{gamma-w}. There are only three elements which may be contained in $\Gamma(w_2)$: $w_2$, $w_1=w_0s_1s_2s_3s_2s_1\in w_0\mathcal{C}_1$
 and $w_4=w_0s_4s_3s_2s_3s_4\in w_0\mathcal{C}_4$. 
 Denote ${\rm J}_i=\Pi\setminus {\alpha_i}$. 
Then,  $-w_0s_i\rho=-w_0(\rho-\alpha_i)=\rho-\alpha_i$ is $\Phi_{{\rm J}_i}^+$-dominant.
We  have $V(L_{w_i})=V(L_{w_0s_i})\subset \mathfrak{u}_{{\rm J}_i}$ since $s_i\in \mathcal{C}_i$. If $w_1\in \Gamma(w_2)$,  we will have $V(L_{w_1})=\mathcal{V}(w_1)\subset V(L_{w_2})\subset \mathfrak{u}_{{\rm J}_2}$. Then from Proposition \ref{Same}, $-w_0s_1\rho=\rho-\alpha_1$ will be  $\Phi_{{\rm J}_2}^+$-dominant, which is a contradiction since $(\rho-\alpha_1,\alpha^{\vee}_1)=0$. Thus $w_1\notin \Gamma(w_2)$. Similarly, $w_4\notin \Gamma(w_2)$. So we must have $\Gamma(w_2)=w_2$. Similarly, we have $\Gamma(w_3)=w_3$ where $w_3=w_0s_3s_2s_3s_4$ is the minimal length element in the right cell $w_0\mathcal{C}_3$.

Now the proof is finished.

\end{proof}

Note that the elements appeared in Proposition \ref{f4av} are the minimal length element $w_{\rm min}$ of the KL right cell $w_0\mathcal{C}_i$ for each $1\leq i\leq 4$.

\section{The case of $G_2$}\label{typeg2}

Let $\Phi$ be the root system of type $G_2$, with simple reflections $s_1=s_{\alpha_1}$ ($ \alpha_1$ is the short root) and $s_2=s_{\alpha_2}$ ($\alpha_2$ is the long root). The Weyl group $W(G_2)$ has $12$ elements. Let $w_0$ be the longest element of $W(G_2)$, with length $\ell(w_0)=6$. It is easy to see that $w_0\mathcal{C}_1=\mathcal{C}_2$ and $w_0\mathcal{C}_2=\mathcal{C}_1$.


For $G_2$, from Theorem \ref{BP-patterns}, the only possible stellar subsystems are those of rank $\le 2$:\[\Delta = B_2\text{ and } \Delta = G_2.\]
Thus we only need to check $\Delta = G_2$ (the whole system) and $\Delta = B_2$ (if it exists inside $G_2$).

When $\Delta = G_2$, $f_\Delta(w) = w = w_0 s$. From Theorem \ref{BP-patterns}, the forbidden patterns for $G_2$ are:\[[s_2]s_1 s_2 s_1 [s_2],\quad s_1 s_2 s_1 s_2 s_1.\]


 When $\Delta = B_2$. The length ratio in $G_2$ is $1:\sqrt{3}$, while in $B_2$ it is $1:\sqrt{2}$. Thus no $B_2$ subsystem exists in $G_2$ since root lengths are not compatible. Hence no check needed for $B_2$.

Therefore,  for $s \in W(G_2)$, the Schubert variety $X(w_0 s)$ is smooth if and only if \[w_0 s \notin \{ s_1 s_2 s_1,\ s_2 s_1 s_2 s_1,\ s_1 s_2 s_1 s_2,\ s_2 s_1 s_2 s_1 s_2,\ s_1 s_2 s_1 s_2 s_1 \}.\]

\begin{cor}
For type $G_2$,     we have $S(w_0\mathcal{C}_{1})=S(\mathcal{C}_{2})=\{s_2,s_2s_1,s_2s_1s_2\}$ and $S(w_0\mathcal{C}_{2})=S(\mathcal{C}_{1})=\{s_1,s_1s_2\}$.
\end{cor}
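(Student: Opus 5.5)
The plan is to compute the two right cells $w_0\mathcal{C}_1$ and $w_0\mathcal{C}_2$ explicitly as sets of reduced words, and then intersect each with the complement of the forbidden list obtained just above from Theorem \ref{BP-patterns}.

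\textbf{Step 1: the cells.} By Lemma \ref{rightcell-ci}, $\mathcal{C}_1=\{s_1,\,s_1s_2,\,s_1s_2s_1,\,s_1s_2s_1s_2,\,s_1s_2s_1s_2s_1\}$, and $\mathcal{C}_2$ is the same list with the indices $1$ and $2$ interchanged. I will use $w_0=s_2s_1s_2s_1s_2s_1=s_1s_2s_1s_2s_1s_2$ from Lemma \ref{long}. Right multiplication of $w_0$ by $s_1s_2\cdots$ cancels letters from the right end of the expression $s_2s_1s_2s_1s_2s_1$. This gives
\[
w_0s_1=s_2s_1s_2s_1s_2,\quad w_0s_1s_2=s_2s_1s_2s_1,\quad w_0s_1s_2s_1=s_2s_1s_2,\quad w_0s_1s_2s_1s_2=s_2s_1,\quad w_0s_1s_2s_1s_2s_1=s_2 .
\]
Hence $w_0\mathcal{C}_1=\mathcal{C}_2$, and symmetrically $w_0\mathcal{C}_2=\mathcal{C}_1$. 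Each element of $W(G_2)$ other than $1$ and $w_0$ has a unique reduced word, so these words identify the elements unambiguously.

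\textbf{Step 2: the smoothness criterion.} I will justify that in $G_2$ the criterion reduces to membership in the five-element forbidden list. The stellar subsystems of $G_2$ can only be of type $B_2$ or $G_2$, because subsystems of type $A_1$, $A_1\times A_1$ and $A_2$ are not stellar. There is no $B_2$ subsystem, as argued above from the ratio of root lengths. So by Theorem \ref{BP-main} only $\Delta=G_2$ itself matters. For this $\Delta$ the flattening map is the identity, since $\Phi_w\cap\Delta=\Phi_w$.

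Expanding $[s_2]s_1s_2s_1[s_2]$ and $s_1s_2s_1s_2s_1$ from Theorem \ref{BP-patterns} gives the forbidden set
\[
\{s_1s_2s_1,\ s_2s_1s_2s_1,\ s_1s_2s_1s_2,\ s_2s_1s_2s_1s_2,\ s_1s_2s_1s_2s_1\}.
\]
Therefore $X(w)$ is smooth exactly when $w$ is not in this set.

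\textbf{Step 3: intersecting.} In $w_0\mathcal{C}_1=\mathcal{C}_2$, the elements $s_2s_1s_2s_1$ and $s_2s_1s_2s_1s_2$ are forbidden. The remaining elements $s_2$, $s_2s_1$, $s_2s_1s_2$ are smooth, since $s_2s_1s_2$ does not occur in the list. In $w_0\mathcal{C}_2=\mathcal{C}_1$, the elements $s_1s_2s_1$, $s_1s_2s_1s_2$ and $s_1s_2s_1s_2s_1$ are forbidden, which leaves $\{s_1,s_1s_2\}$.

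\textbf{Main obstacle.} The only delicate point is the convention issue: the Billey--Postnikov table must use the same labelling as here, with $\alpha_1$ short. Otherwise the roles of $s_1$ and $s_2$ in the forbidden patterns swap. I would check this by verifying directly one asymmetric case, for instance that $X(s_1s_2s_1)$ is singular while $X(s_2s_1s_2)$ is smooth. One way is to compare Poincar\'e polynomials for rational smoothness: both intervals are palindromic ($1+2t+2t^2+t^3$), so that test cannot distinguish them. Failing that, I would rely on the table in \cite{BP-05} under the stated convention.
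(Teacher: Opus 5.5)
Your proposal is correct and follows the paper's proof essentially step for step: you identify $w_0\mathcal{C}_1=\mathcal{C}_2$ and $w_0\mathcal{C}_2=\mathcal{C}_1$, reduce Theorem~\ref{BP-main} to the single stellar subsystem $\Delta=G_2$ (there is no $B_2$ inside $G_2$), expand the five forbidden words of Theorem~\ref{BP-patterns}, and intersect. The labelling caveat you raise is also passed over silently in the paper, and it comes out consistent with $\alpha_1$ short: $X(s_2s_1s_2)$ is a $\mathbb{P}^1$-bundle over the $2$-dimensional Schubert variety of $Q^5=G_2/P$ (with $W_P=\langle s_2\rangle$), which is a plane and hence smooth, whereas $X(s_1s_2s_1)$ is a $\mathbb{P}^1$-bundle over the $2$-dimensional Schubert variety of the adjoint variety (with $W_P=\langle s_1\rangle$), which is a cone over a twisted cubic and hence singular at its vertex.
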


\begin{cor}
    When $W=W(G_2)$ and $w\in w_0\mathcal{C}=\mathcal{C}$, the associated variety of $L_w$ will be irreducible. We have $V(L_w)=\mathcal{V}(s_i)$ if  $w\in \mathcal{C}_{i}$.
\end{cor}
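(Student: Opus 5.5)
The statement to prove is the last corollary: for $W=W(G_2)$ and $w\in w_0\mathcal{C}=\mathcal{C}$, the module $L_w$ has irreducible associated variety, and $V(L_w)=\mathcal{V}(s_i)$ when $w\in\mathcal{C}_i$.

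The plan is to reduce everything to a single smooth element in each right cell. That reduction uses three facts from the paper.
\begin{itemize}
\item Lemma \ref{constant}: $V(L_w)$ is constant on each KL right cell.
\item Proposition \ref{smoothequal}: if $X(y)$ is smooth, then $V(L_y)=\mathcal{V}(y)$.
\item The preceding corollary: $s_1\in S(w_0\mathcal{C}_2)$ and $s_2\in S(w_0\mathcal{C}_1)$.
\end{itemize}

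\textbf{Step 1: identify the cells.} Take $w\in w_0\mathcal{C}$. From Lemma \ref{rightcell-ci}, $\mathcal{C}$ is the disjoint union of $\mathcal{C}_1$ and $\mathcal{C}_2$. Here
\[
\mathcal{C}_1=\{s_1,\,s_1s_2,\,s_1s_2s_1,\,s_1s_2s_1s_2,\,s_1s_2s_1s_2s_1\},
\]
and $\mathcal{C}_2$ is obtained by swapping the indices $1$ and $2$. Every element of $W(G_2)$ other than $1$ and $w_0$ has a unique reduced expression, so $\mathcal{C}=W\setminus\{1,w_0\}$. Left multiplication by $w_0$ preserves this set, which gives $w_0\mathcal{C}=\mathcal{C}$. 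More precisely, $w_0\mathcal{C}_1=\mathcal{C}_2$ and $w_0\mathcal{C}_2=\mathcal{C}_1$. To confirm this, note that $w_0$ is central in $W(G_2)$ and acts as $-1$. Hence $w_0s_1s_2\cdots$ (a word of length $k$) equals the alternating word of length $6-k$, and one checks directly which simple reflection is its left descent.

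\textbf{Step 2: apply the smoothness criterion.} Suppose $w\in\mathcal{C}_i$. Then $s_i\in\mathcal{C}_i$, and $X(s_i)\cong\mathbb{P}^1$ is smooth. This also follows from the preceding corollary, since $\mathcal{C}_1=w_0\mathcal{C}_2$ contains the smooth element $s_1$, and symmetrically $s_2\in\mathcal{C}_2$. Lemma \ref{constant} and Proposition \ref{smoothequal} then give
\[
V(L_w)=V(L_{s_i})=\mathcal{V}(s_i).
\]
A single orbital variety is irreducible, so $V(L_w)$ is irreducible. This matches the general shape of Theorem \ref{AV-min}, because $s_i$ is the minimal length element $w_{\rm min}$ of the right cell $\mathcal{C}_i$.

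\textbf{Expected obstacle.} The only step needing care is the bookkeeping of which right cell is which under the labels $w_0\mathcal{C}_j$. We need to make sure the smooth element chosen actually lies in the cell $\mathcal{C}_i$ containing $w$, and not in the other cell. I would settle this by writing out all twelve elements of $W(G_2)$ together with their left descent sets. No further representation theory is required.
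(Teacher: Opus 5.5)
Your proposal is correct and follows the paper's route. The right cell $\mathcal{C}_i$ contains the smooth element $s_i$; you observe directly that $X(s_i)\cong\mathbb{P}^1$, rather than using the Billey--Postnikov computation behind the preceding corollary. Lemma \ref{constant} and Proposition \ref{smoothequal} then give $V(L_w)=V(L_{s_i})=\mathcal{V}(s_i)$ for every $w\in\mathcal{C}_i$, exactly as the paper intends.
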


\subsection*{Acknowledgments}

Z. Bai is
supported by NSFC Grant No. 12171344. 
We would like to thank Xun Xie for very helpful
discussions on Kazhdan--Lusztig two-sided cells.


\bibliographystyle{plain}
\bibliography{BWX-2025}

\end{document}